\documentclass[12pt,A4]{article}
\usepackage[french,english]{babel}
\usepackage[T1]{fontenc}
\usepackage[applemac]{inputenc}
\usepackage{amssymb,url,xspace,amsmath,amsthm,eucal,yhmath}
\usepackage{mathrsfs,eufrak,array,epsfig,psfrag,graphicx,dsfont,bbm,stmaryrd}
\usepackage[all]{xy}
\usepackage{hyperref}
\usepackage{graphicx}
\usepackage{verbatim}
\usepackage {cases} 
\usepackage {}

\textwidth 173mm \textheight 235mm \topmargin -50pt \oddsidemargin
-0.45cm \evensidemargin -0.45cm

\newtheorem{thm}{Theorem}[section]
\newtheorem{lem}[thm]{Lemma}
\newtheorem{prop}[thm]{Proposition}
\newtheorem{cor}[thm]{Corollary}

\theoremstyle{definition}

\newtheorem{defn}[thm]{Definition}

\newtheorem{rem}[thm]{Remark}

\long\def\symbolfootnote[#1]#2{\begingroup%
\def\thefootnote{\fnsymbol{footnote}}\footnote[#1]{#2}\endgroup}

\renewcommand\Pr[1]{\mathbb{P}\left[#1\right]}

\newcommand\Prmu[1]{\mathbb{P}_{\mu}\left[#1\right]}
\newcommand\Prmuj[1]{\mathbb{P}_{\mu,j}\left[#1\right]}

\newcommand\Es[1]{\mathbb{E}\left[#1\right]}
\newcommand\Esmu[1]{\mathbb{E}_{\mu}\left[#1\right]}
\newcommand\Esmuj[1]{\mathbb{E}_{\mu,j}\left[#1\right]}

\newcommand \fl[1] {\left\lfloor #1 \right\rfloor}
\newcommand \ce[1] {\left\lceil #1 \right\rceil}

\def \Card {\mathop{{\rm Card } }\nolimits}


\def \P {\mathbb{P}}

\def \Pmu {\mathbb{P}_\mu}
\def \Pmuzn {\mathbb{P}_\mu[ \, \cdot \, | \, \zt=n]}
\def \Pmuln {\mathbb{P}_\mu[ \, \cdot \, | \, \lt=n]}
\def \Pmuzgeqn {\mathbb{P}_\mu[ \, \cdot \, | \, \zt\geq n]}
\def \Pmulgeqn {\mathbb{P}_\mu[ \, \cdot \, | \, \lt\geq n]}
\def \Pmuj {\mathbb{P}_{\mu,j}}

\def \N {\mathbb N}
\def \E {\mathbb E}
\def \D {\mathbb D}

\def \C {\mathcal{C}}
\def \R {\mathbb R}

\def \D {\mathbb D}
\def \T {\mathbb T}
\def \Z {\mathbb Z}
\def \F {\mathcal{F}}
\def \S {\mathcal{S}}
\def \W {\mathcal{W}}
\def \Sb {\overline{\mathcal{S}}}

\def \I {\mathcal{I}}
\def \Vc {\mathcal{V}}
\def \V {\textbf{V}}
\newcommand{\norme}[1]{\left\Vert #1\right\Vert _ \infty}

\def \bx {\textnormal{\textbf{x}}}
\def \bf {\textbf{f}}

\def \l {\lambda}

\def \card {\textnormal{Card}}
\def \supp {\textnormal{supp}}

\def \X {X^\textnormal{exc}}
\def \H {H^\textnormal{exc}}

\def \d {\displaystyle}
\def \a {\alpha}
\def \b {\beta}

\def \e {\epsilon}

\def \Nn {\textnormal{\textbf{N}}}
\def \me {\mathbbm{e}}
\def \br {\textnormal{br}}
\def \GW {\textnormal{GW}}
\def \z {\zeta}
\def \sp {{\sigma'}^2}
\def \t {\mathfrak{t}}

\def \| { \, | \,}

\def \zt {\zeta(\tau)}
\def \lt {\lambda(\tau)}

\renewcommand\i[1]{\inf_{\left[#1\right]}}

\newcommand{\keywords}[1]{ \noindent {\footnotesize
             {\small \em Keywords.} {\sc #1}}.}

\newcommand{\ams}[2]{  \noindent {\footnotesize
             {\small \em AMS {\rm 2000} subject classifications.
             {\rm Primary {\sc #1}; secondary {\sc #2}}}.} }

\begin{document}

\vskip1cm

\begin {center}\LARGE Invariance principles for Galton-Watson trees \\
conditioned on the number of leaves \end {center}

\vspace{8mm}

\centerline{\large Igor Kortchemski
\symbolfootnote[1]{Université Paris-Sud, Orsay, France. \texttt{igor.kortchemski@normalesup.org}}}

\vspace{4mm}

\centerline{\large April 2012}



 \vspace{3mm}
\vspace{1cm}

\begin{abstract}
We are interested in the asymptotic behavior of critical
Galton-Watson trees whose offspring distribution may have infinite
variance, which are conditioned on having a large fixed number of leaves. We first find an asymptotic
estimate for the probability of a Galton-Watson tree having $n$ leaves. Second, we let $\t_n$ be a critical Galton-Watson tree whose
offspring distribution is in the domain of
attraction of a stable law, and conditioned on having exactly $n$
leaves. We show that the rescaled Lukasiewicz path and contour
function of $\t_n$ converge respectively to $\X$ and $\H$, where
$\X$ is the normalized excursion of a strictly stable spectrally
positive Lévy process and $\H$ is its associated continuous-time height
function. As an application, we investigate the distribution of the
maximum degree in a critical Galton-Watson tree conditioned on
having a large number of leaves. We also explain how these results can be generalized to the case of Galton-Watson trees which are conditioned on having a large fixed number of vertices with degree in a given set, thus extending results obtained by Aldous, Duquesne and Rizzolo.

\bigskip

\keywords{Random trees, invariance principles, scaling limits, conditioned Galton-Watson trees, Stable trees}

 \bigskip

\ams{60J80,60F17}{05C05}
\end{abstract}

\section*{Introduction}
In this article, we are interested in the asymptotic behavior of
critical Galton-Watson trees whose offspring distribution may have
infinite variance, and which are conditioned on having a large fixed number of vertices with degree in a given set. We focus in particular on Galton-Watson trees conditioned on having a large fixed number of leaves. Aldous
\cite{A1,A2} studied the shape of large critical Galton-Watson trees
whose offspring distribution has finite variance, under the condition that the
total progeny is equal to $n$. Aldous' result has then been extended
to the infinite variance case (see e.g. \cite{Duquesne,DuquesneLG}).
In a different but related direction, the effect of
conditioning a Galton-Watson tree on having height equal to $n$ has
been studied \cite{GK,KP,LGIto}, and Broutin \& Marckert \cite{BroutinM} have investigated the asymptotic behavior of uniformly distributed trees with prescribed degree sequence. In \cite {Klam}, we introduced a new type of
conditioning involving the number of leaves of the tree in order to study a specific discrete probabilistic
model, namely dissections of a regular polygon with Boltzmann weights. The results contained in the present article are important
for understanding the asymptotic behavior of the latter model (see
\cite{CK,Klam}). The more general conditioning on having a fixed number of vertices with degree in a given set has been considered very recently by Rizzolo  \cite {Rizzolo}. The results of the present work were obtained independently of  \cite {Rizzolo} (see the end of this introduction for a discussion of the relation between the present work and  \cite {Rizzolo}).

Before stating our main results, let us introduce some notation. If $ \mu$ is a probability distribution on the nonnegative integers, $\Pmu$ will be the law of the
Galton-Watson tree with offspring distribution $ \mu$ (in short the $\GW_\mu$ tree). Let $ \zt$ be the total number of vertices of a tree $\tau$ and let $\lambda(\tau)$ be its number of leaves, that is the number of individuals of
$\tau$ without children.  Let $ \mathcal{A}$ be a non-empty subset of $\N=  \{0,1,2, \ldots\}$. If $ \tau$ is a tree, denote the number of vertices $u \in \tau$ such that the number of children of $u$ is in $\mathcal{A}$ by $ \zeta_ \mathcal{A}( \tau)$. Note that $ \zeta_ \N ( \tau)= \zeta ( \tau)$ and $ \zeta _ {  \{0\}}( \tau)= \lt$.  

We now introduce three different coding functions which determine $\tau$ (see Definition \ref{def:fonctions} for details). Let $u(0), u(1), \ldots u( \zt-1)$ denote the vertices of $ \tau$ in lexicographical order. The Lukasiewicz path $ \W(\tau)=( \W_n(\tau), 0 \leq n  \leq
\zeta(\tau))$ is defined by $ \W_0(\tau)=0$ and for
$0 \leq n \leq \zeta(\tau)-1$, $ \W_{n+1}(\tau)= \W_{n}(\tau)+k_ {n}-1$, where $k_ {n}$ is the number of children of $u(n)$. For $0 \leq i \leq  \zt-1$, define $H_i( \tau)$ as the generation of $u(i)$ and set $H _ {\zt} ( \tau)=0$.
The height function $H( \tau)=(H_t( \tau); 0 \leq t \leq \zt)$ is then defined by linear interpolation. To define the contour function $(C_t ( \tau), 0 \leq t \leq  2 \zt)$, imagine a particle that explores the tree from the left
to the right, moving at unit speed along the edges. Then, for $0 \leq t \leq 2(\zt-1)$, $C_t(\tau)$ is defined as the distance to the root of the position
of the particle at time $t$ and we set
$C_t(\tau)=0$ for $t \in [2(\zt-1), 2 \zt]$. See Fig. \ref{fig:tree1} and  \ref{fig:tree} for an example.

Let
$\theta \in (1,2]$ be a fixed parameter and let $(X_t)_{t \geq 0}$ be the spectrally
positive Lévy process with Laplace exponent $ \E[ \exp(- \lambda
X_t)]= \exp(t \lambda ^ \theta)$. Let also $p_1$ be the density of $X_1$. For $ \theta=2$, note that $X$ is a constant times standard Brownian motion. Let  $\X=(\X_t)_{0 \leq t \leq 1}$ be the normalized excursion of $X$ and $\H=(\H_t)_{0 \leq t \leq 1}$ its
associated continuous-time height function (see Section \ref{sec:lev} for precise definitions). Note that $\H$ is a random continuous function on $[0,1]$
that vanishes at $0$ and at $1$ and takes positive values on
$(0,1)$, which codes the so-called $ \theta$-stable random tree (see \cite{Duquesne}).

\bigskip

We now state our main results. Fix $ \theta \in (1,2]$. Let $\mu$ be an aperiodic probability distribution on the nonnegative integers. Assume that $ \mu$ is critical (the mean of $\mu$ is $1$) and belongs to the domain of attraction of a stable law of
index $\theta \in (1,2]$.
\begin{enumerate}
\item[(I)] Let $d \geq 1$ be the largest integer such
that there exists $b \in \N$ such that supp$(\mu)\backslash  \{0\}$ is
contained in $b+d \Z$, where supp$(\mu)$ is the support of $\mu$.
Then there exists a slowly varying function $h$ such that:
$$ \Prmu{\lambda( \tau)=n} \quad \mathop{\sim}_{n \rightarrow \infty} \quad \mu( 0)^{1/\theta} p_1(0) \frac{\gcd(b-1,d)}{h(n) n^{ 1/\theta+1}}$$
for those values of $n$ such that $ \Prmu{\lambda( \tau)=n}>0$. Here we write $a_n \sim b_n$ if $a_n/b_n \rightarrow
1$ as $n\rightarrow \infty$.
\item[(II)] For every $n\geq 1$ such that $\Prmu{ \lt =n} >0$, let $\t_n$ be a random tree distributed according to $\Pmu[\, \cdot \, | \,  \lt=n]$. Then there exists a sequence of positive real numbers $(B_n)_{n \geq 1}$ converging to $ \infty$ such that

$$\left( \frac{1}{B_{\z(\t_n)} } \W_{ \lfloor \z(\t_n) t \rfloor
}(\t_n),\frac{B_{\z(\t_n)}}{\z(\t_n)}C_{2 \z(\t_n) t}(\t_n),
\frac{B_{\z(\t_n)}}{\z(\t_n)}H_{\z(\t_n) t}(\t_n)\right)_{0 \leq t
\leq 1}$$ converges in distribution to $ ( \X, \H, \H)$ as $n \rightarrow \infty$.
\end{enumerate}

At the end of this work, we explain how to extend (I) and (II) when the condition ``$\lt=n$'' is replaced by the more general condition ``$ \zeta_  { \mathcal {A}} ( \tau)=n$'' (see Theorem \ref{thm:general}). However, we shall give detailed arguments only in the case of a fixed number of leaves. This particular case is  less technical and suffices in view of applications to the study of random dissections. 
\bigskip


We now explain the main steps and techniques used to establish (I) and (II) when  $ \mathcal {A}=  \{0\}$. Let $\nu$ be the probability measure on $ \Z$ defined by
$\nu(k)=\mu(k+1)$ for $k \geq -1$. Our starting point is a
well-known relation between the Lukasiewicz path of a $\GW_\mu$ tree and an associated random
walk. Let $(W_n; n \geq 0)$ be a random walk started at $0$ with
jump distribution $\nu$ and set $\zeta =\inf\{n \geq 0; \,
W_n=-1\}$. Then the Lukasiewicz path of a $\GW_\mu$ tree has the same law as $(W_0,W_1,\ldots,W_{\zeta})$. Consequently, the
total number of leaves of a $\GW_\mu$ tree has the same law as
$\sum_{k=1}^{\zeta} 1_{\{W_k-W_{k-1}=-1\}}$. By noticing that this
last sum involves independent identically distributed Bernoulli
variables of parameter $\mu(0)$, large deviations techniques
give:
\begin{equation}\label{eq:lg}\Pmu\left[\lt=n \textrm{ and }
\left|\zt-\frac{n}{\mu(0)}\right|>{\zt}^{3/4}\right]\leq e^{-c
\sqrt{n}}\end{equation} for some $c> 0$. This roughly says that a $ \GW_ \mu$ tree with $n$ leaves has approximately $n / \mu(0)$ vertices with high probability. Since $\GW_ \mu$ trees conditioned on their total progeny are well known, this will allow us to study $\GW_ \mu$ trees conditioned on their number of leaves.

Let us now explain how an asymptotic estimate for $\Prmu{\lt=n}$ can
be derived. Define $\Lambda(n)$ by:
$$\Lambda(n)= \Card \{ 0 \leq i \leq n-1 ; \, W_{i+1}-W_i=-1\}.$$
The crucial step consists in noticing that for $n,p \geq 1$, the
distribution of $(W_0,W_1,\ldots,W_p)$ under the conditional
probability measure $\P[\, \cdot \, | \, W_p=-1, \Lambda(p)=n]$ is
cyclically exchangeable. The so-called Cyclic Lemma and the relation
between the Lukasiewicz path of a $\GW_\mu$ tree and the random walk
$W$ easily lead to the following identity (Proposition \ref{prop:K}):
\begin{equation}\label{eq:f}\Pmu[\zt=p, \lt=n]=\frac{1}{p} \,
\P[\Lambda(p)=n, W_p=-1]=\frac{1}{p} \, \P[S_p=n] \,
\P[W'_{p-n}=n-1],\end{equation} where $S_p$ is  the sum of $p$
independent Bernoulli random variables of parameter $\mu(0)$ and $W'$
is the random walk $W$ conditioned on having nonnegative jumps. From
the concentration result (\ref{eq:lg}) and using extensively a suitable local
limit theorem, we deduce the asymptotic estimate (I).

The proof of (II) is more
elaborate. The first step consists in proving the convergence on every interval $[0,a]$ with $ a \in (0,1)$. To this end, using the large deviation bound (\ref{eq:lg}), we first prove an analog of (II) when $\t_n$
is a tree distributed according to $\Pmu[ \, \cdot \, | \lt \geq
n]$. We then
use an absolute continuity relation between the conditional probability measure $\Pmuln$ and the
conditional probability measure $\Pmulgeqn$ to get the desired convergence  on every interval $[0,a]$ with $ a \in (0,1)$. The second step is to extend this convergence to the whole interval $[0,1]$ via a tightness argument based on a time-reversal property. In the case of the Lukasiewicz path, an additional argument using the Vervaat transformation is needed.

\bigskip

As an application of these techniques, we study the
distribution of the maximum degree in a Galton-Watson tree
conditioned on having many leaves. More precisely, if $ \tau$ is a tree, let $\Delta( \tau)$ be the maximum number of children of a vertex of $\tau$. Let also $\overline{\Delta}( \X)$ be the
largest jump of the càdlàg process $ \X$. Set $D(n)= \max\{ k \geq 1; \,  \mu([k,\infty)) \geq
1/n\}$. For every $n\geq 1$ such that $\Prmu{ \lt =n} >0$, let $\t_n$ be a random tree distributed according to $\Pmu[\, \cdot \, | \,  \lt =n]$. Then, under assumptions on the asymptotic behavior of the sequence $( \mu(n)^ {1/n})_ {n \geq 1}$ in the finite variance case (see Theorem \ref{thm:max}):
 \begin {enumerate}
 \item [(i)]If the variance of $ \mu$ is infinite, then $\mu(0)^{1/\theta} \Delta(\t_n)/ B_n$ converges in distribution towards  $\overline{\Delta}( \X) $.
 \item[(ii)] If the variance of $\mu$ is finite, then $\Delta(\t_n)/ D(n)$ converges in probability towards $1$.
\end{enumerate}
The second case yields an interesting application to the maximum face degree in a large uniform dissection (see \cite {CK}). Let us mention that using generating functions
and saddle-point techniques,  similar results have been obtained by Meir and Moon \cite{MM} when $\t_n$ is distributed according to $ \Pmuzn$. Our approach can be adapted to give a probabilistic proof of their result.

\bigskip

We now discuss the connections of the present article with
earlier work.  Using different arguments,
formula (\ref{eq:f}) has been obtained in a different form by Kolchin \cite{Kolchin}. The asymptotic behavior of $\Prmu{\zeta_ \mathcal{A}( \tau)= n}$ has been studied in \cite{Minami,My,My2} when $ \Card (\mathcal {A})=1$ and the second moment of $ \mu$  is finite.  Absolute continuity arguments have often been used to derive invariance principles for random trees and forests, see e.g. \cite {CP09, Duquesne,LGM,LGIto}.

Let us now discuss the relationship between the present work and Rizzolo's recent article \cite{Rizzolo}, which deals with similar conditionings of random trees. The main result of \cite{Rizzolo} considers a random tree distributed according to $\Pmu[\, \cdot \, | \,
\zeta_ \mathcal{A}( \tau)= n ]$, where it is assumed that $ 0 \in \mathcal {A}$. In the finite variance case, \cite {Rizzolo} gives the convergence in distribution in the rooted Gromov-Hausdorff-Prokhorov sense of the (suitably rescaled) tree $ \t_n$ viewed as a (rooted) metric space for the graph distance towards the Brownian CRT. Note that the convergence of the contour functions in (II), together with Corollary  \ref {cor:Lambda}, does imply the Gromov-Hausdorff-Prokhorov convergence of trees viewed as metric spaces, but the converse is not true. Furthermore our results also apply to the infinite variance case and include the case where $ 0 \not \in \mathcal {A}$.

\bigskip
The paper is organized as follows. In Section 1, we present the
discrete framework and we define Galton-Watson trees and their codings.
We prove (\ref{eq:f}) and explain how
the local limit theorem gives information on the asymptotic behavior
of large $\GW_\mu$ trees. In Section 2, we present a law of large
numbers for the number of leaves, which leads to the concentration
formula (\ref{eq:lg}). In Section 3, we prove (I). In Section 4, we establish an invariance
principle under the conditional probability $\Pmulgeqn$. In Sections
5 and 6, we refine this result by obtaining an invariance principle
under the conditional probability $\Pmuln$, thus proving (II). As an application, we study in Section 7 the distribution of the maximum degree in a Galton-Watson
tree conditioned on having many leaves. Finally, in Section 8, we explain how the techniques used to deal with the case $ \mathcal {A}=  \{ 0 \}$ can be extended to general sets $ \mathcal {A}$.

\bigskip

\textbf{Acknowledgements.} I am deeply indebted to Jean-Fran\c{c}ois Le Gall for enlightening discussions and for many suggestions on the earlier versions of this work. I also thank Louigi Addario-Berry for a useful discussion concerning the case where $ \theta=2$ and $ \mu$ has infinite variance, and Douglas Rizzolo for remarks on this work.

 \tableofcontents


\bigskip
\bigskip

\textbf {Notation and assumptions.} Throughout this work $\theta \in (1,2]$ will be a fixed parameter. We say that a probability distribution $(\mu(j))_{j \geq 0}$ on the nonnegative integers satisfies hypothesis $(H_ \theta)$ if the following three conditions hold:
\begin{enumerate}
\item[(i)] $\mu$ is critical, meaning that $\sum_{k=0}^\infty k \mu(k)=1$, and $ \mu(1)<1$.
\item[(ii)] $\mu
$ is in the domain of attraction of a stable law of index $\theta
\in (1,2]$. This
means that either the variance of $\mu$ is finite, or $\mu([j,\infty))= j^{-\theta} L(j)$,
where $L: \R_+ \rightarrow \R_+$ is a function such that $\lim_{x
\rightarrow \infty} L(tx)/L(x)=1$ for all $t>0$ (such a function is
called slowly varying). We refer to \cite{Bingham} or \cite[chapter
3.7]{Durrett} for details.
\item[(iii)] $\mu$ is aperiodic, which means that
the additive subgroup of the integers $\mathbb{Z}$ spanned by $\{j;
\, \mu(j) \neq 0 \}$ is not a proper subgroup of $\mathbb{Z}$.
\end{enumerate}
We introduce condition (iii) to
avoid unnecessary complications, but our results can be extended to
the periodic case.

Throughout this text, $\nu$ will stand for the probability measure
defined by $\nu(k)=\mu(k+1)$ for $k \geq -1$. Note that $\nu$ has
zero mean. To simplify notation, we write $ \mu_0$ instead of $ \mu(0)$. Note that $ \mu_0 >0$ under $ ( H_ \theta)$.

\section{The discrete setting : Galton-Watson trees}
\subsection{Galton-Watson trees}

\begin{defn}Let $\N=\{0,1,\ldots\}$ be the set of all nonnegative integers, $\N^*=\{1,2,\ldots\}$ and $U$ the set of labels:
$$U=\bigcup_{n=0}^{\infty} (\N^*)^n,$$
where by convention $(\N^*)^0=\{\emptyset\}$. An element of $U$ is a
sequence $u=u_1 \cdots u_m$ of positive integers, and we set
$|u|=m$, which represents the \og generation \fg \, of $u$. If $u=u_1
\cdots u_m$ and $v=v_1 \cdots v_n$ belong to $U$, we write $uv=u_1
\cdots u_m v_1 \cdots v_n$ for the concatenation of $u$ and $v$. In
particular, note that $u \emptyset=\emptyset u = u$. Finally, a
\emph{rooted ordered tree} $\tau$ is a finite subset of $U$ such
that:
\begin{itemize}
\item[1.] $\emptyset \in \tau$,
\item[2.] if $v \in \tau$ and $v=uj$ for some $j \in \N^*$, then $u
\in \tau$,
\item[3.] for every $u \in \tau$, there exists an integer $k_u(\tau)
\geq 0$ such that, for every $j \in \N^*$, $uj \in \tau$ if and only
if $1 \leq j \leq k_u(\tau)$.
\end{itemize}
In the following, by \emph{tree} we will always mean rooted ordered
tree. We denote by the set of all trees by $\T$. We will often view each vertex of
a tree $\tau$ as an individual of a population whose $\tau$ is the
genealogical tree. The total progeny of $\tau$ will be
denoted by $\zeta(\tau)= \textrm{Card}(\tau)$.  A leaf of a tree $\tau$ is a vertex $u
\in \tau$ such that $k_u(\tau)=0$. The total number of leaves of
$\tau$ will be denoted by $\lambda(\tau)$. If $\tau$ is a tree and
$u \in \tau$, we define the shift of $\tau$ at $u$ by $T_u \tau=\{v
\in U; \, uv \in \tau\}$, which is itself a tree.
\end{defn}


\begin{defn}Let $\rho$ be a probability measure on $\N$ with mean less than or equal to $1$ and,
to avoid trivialities, such that $\rho(1)<1$. The law of the
Galton-Watson tree with offspring distribution $\rho$ is the unique
probability measure $\P_\rho$ on $\T$ such that:
\begin{itemize}
\item[1.] $\P_\rho(k_\emptyset=j)=\rho(j)$ for $j \geq 0$,
\item[2.] for every $j \geq 1$ with $\rho(j)>0$, the shifted trees
$T_1 \tau, \ldots, T_j \tau$ are independent under the conditional
probability $\P_\rho( \, \cdot \,|\, k_\emptyset=j)$ and their
conditional distribution is $\P_\rho$.
\end{itemize}
A random tree whose distribution is $\P_\rho$ will be called a Galton-Watson tree with offspring distribution $ \rho$, or in short a $\GW_\rho$ tree.
\end{defn}

 In the sequel, for an integer $j \geq 1$, $\Pmuj$
will stand for the probability measure on $\T^j$ which is the
distribution of $j$ independent $\GW_\mu$ trees. The canonical element
of $\T^j$ will be denoted by $\bf$. For $\bf=(\tau_1,\ldots,\tau_j)
\in \T^j$, set $\l(\bf) = \l(\tau_1)+\cdots+\l(\tau_j)$ and
$\z(\bf)=\z(\tau_1)+\cdots+\z(\tau_j)$ for respectively the total
number of leaves of $\bf$ and the total progeny of $\bf$.

\subsection{Coding Galton-Watson trees}

We now explain how trees can be coded by three different functions.
These codings are crucial in the understanding of large
Galton-Watson trees.

\begin{figure*}[h!]
 \begin{minipage}[c]{9cm}
   \centering
      \includegraphics[scale=0.4]{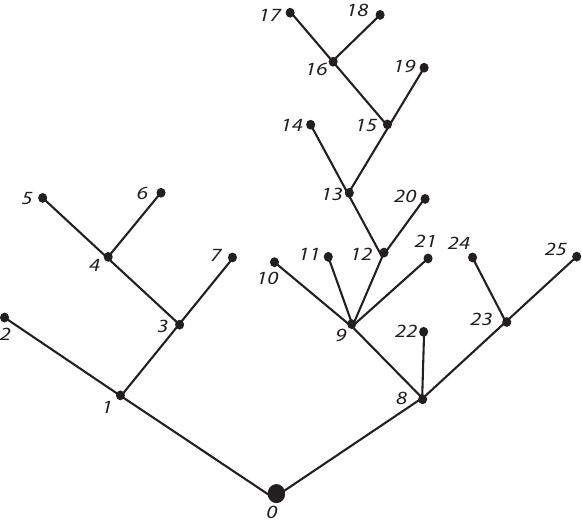}
   \end{minipage}
   \begin{minipage}[c]{9cm}
   \centering
      \includegraphics[scale=0.4]{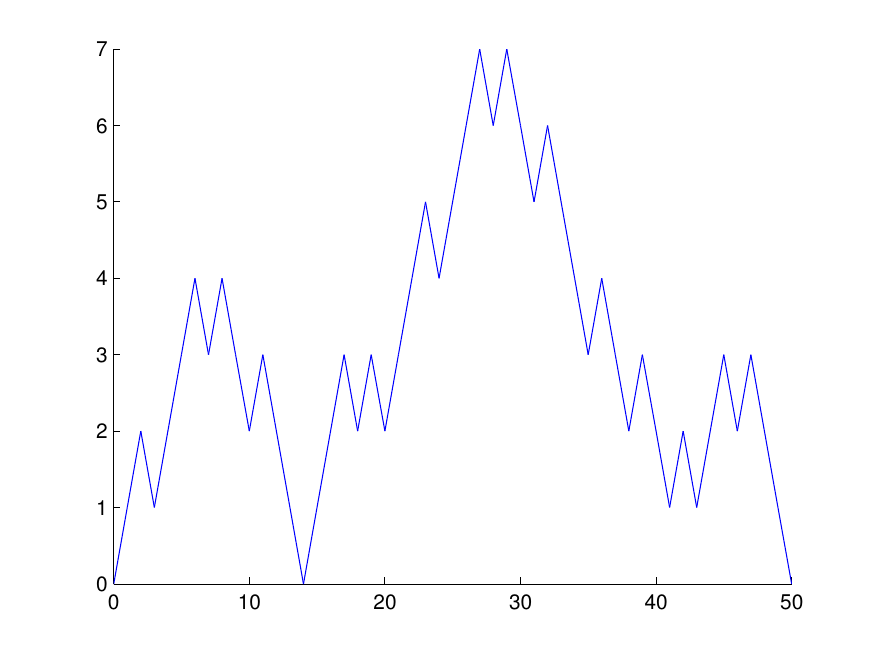}
   \end{minipage}
   \caption{\label{fig:tree1}A tree $\tau$ with its vertices indexed in
lexicographical order and its  contour function $(C_{u}(\tau);\, 0
\leq u \leq 2(\zt-1)$. Here, $\zeta(\tau)=26$.}
   \begin{minipage}[c]{9cm}
   \centering
      \includegraphics[scale=0.4]{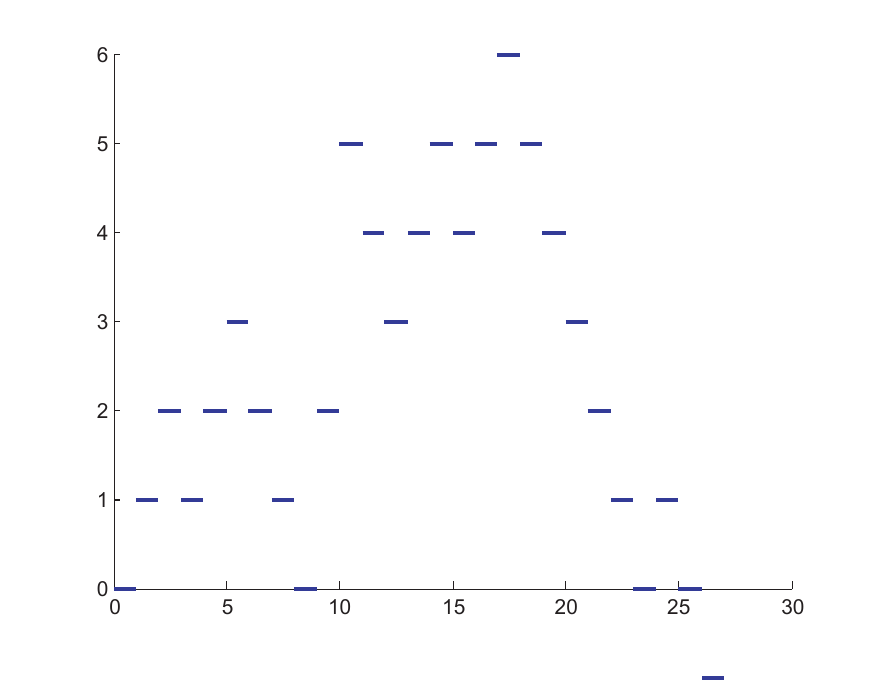}
   \end{minipage}
   \begin{minipage}[c]{9cm}
   \centering
      \includegraphics[scale=0.4]{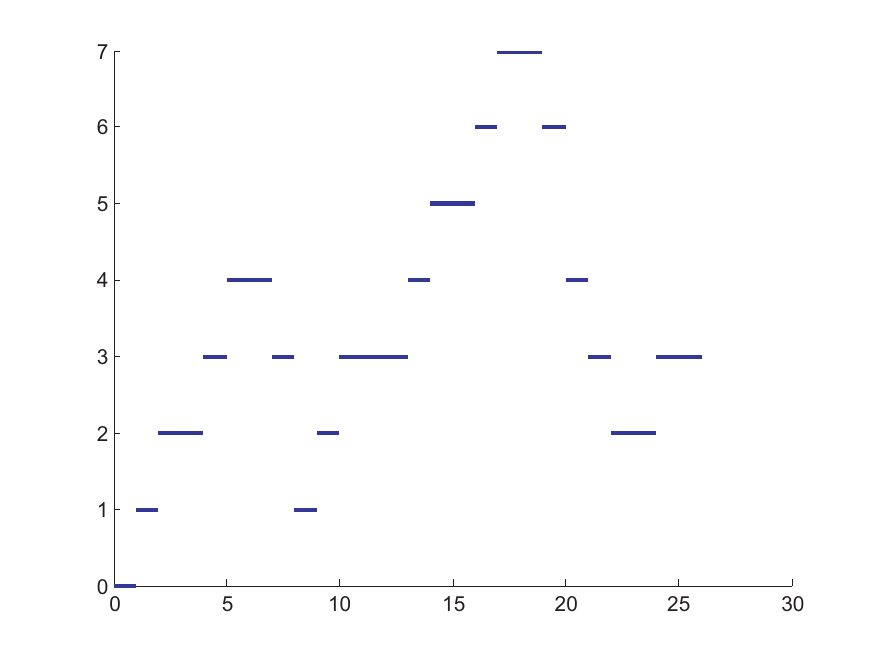}
   \end{minipage}
   \caption{\label{fig:tree} The Lukasiewicz path $\W( \tau)$ and the height function $H(\tau)$ of $\tau$.}
\end{figure*}

\begin{defn}\label{def:fonctions}
We write $u<v$ for the lexicographical order on the labels $U$ (for
example $\emptyset<1<21<22$). Consider a tree $\tau$ and order the
individuals of $\tau$ in lexicographical order:
$\emptyset=u(0)<u(1)<\cdots<u(\zeta(\tau)-1)$. The height process
$H(\tau)=(H_n(\tau), 0 \leq n < \zeta(\tau))$ is defined, for $0
\leq n < \zeta(\tau)$, by: $$H_n(\tau)=|u(n)|.$$ For technical
reasons, we set $H_{\z(\tau)}(\tau)=0$.

Consider a particle that starts from the root and visits
continuously all edges at unit speed (assuming that every edge has
unit length), going backwards as little as possible and respecting the
lexicographical order of vertices. For $0 \leq t \leq 2(\zt-1)$,
$C_t(\tau)$ is defined as the distance to the root of the position
of the particle at time $t$. For technical reasons, we set
$C_t(\tau)=0$ for $t \in [2(\zt-1), 2 \zt]$. The function $C(\tau)$
is called the contour function of the tree $\tau$. See Figure
\ref{fig:tree1} for an example, and \cite[Section 2]{Duquesne} for a
rigorous definition.

Finally, the Lukasiewicz path $ \W(\tau)=( \W_n(\tau), 0 \leq n  \leq
\zeta(\tau))$ of $\tau$ is defined by $ \W_0(\tau)=0$ and for
$0 \leq n \leq \zeta(\tau)-1$:
$$ \W_{n+1}(\tau)= \W_{n}(\tau)+k_{u(n)}(\tau)-1.$$
\end{defn}
Note that necessarily $ \W_{\zt}(\tau)=-1$.

A forest is a finite or infinite ordered sequence of trees. The Lukasiewicz path of a forest is defined as the concatenation of the Lukasiewicz paths of the trees it contains (the word \og concatenation \fg \, should be understood in the appropriate manner, see \cite[Section 2]{Duquesne} for a more precise definition). The following proposition explains the importance of the Lukasiewicz path.

\begin{prop}\label{prop:RW}Fix an integer $j \geq 1$. Let $(W_n; n \geq 0)$ be a
random walk which starts at $0$ with jump distribution $\nu(k)=\mu(k+1)$ for $k \geq -1$. Define
$\zeta_j =\inf\{n \geq 0; \, W_n=-j\}$. Then $(W_0,W_1,\ldots,W_{\zeta_j})$ is distributed as the Lukasiewicz path of a forest of $j$ independent
$\GW_\mu$ trees. In particular, the total progeny of $j$ independent
$\GW_\mu$ trees has the same law as $\z_j$.\end{prop}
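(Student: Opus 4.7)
The plan is to reduce the $j \ge 1$ case to the single-tree case by concatenation, and to handle $j=1$ by a direct computation from the definition of $\Pmu$, using the bijective encoding of a tree by its Lukasiewicz path.

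For $j=1$, I would first observe that $\W(\tau)$ encodes $\tau$ bijectively: from $\W(\tau)=(w_0,\ldots,w_n)$ one reads off $k_{u(i)}(\tau)=w_{i+1}-w_i+1$ for $0\le i\le n-1$, and this sequence of child counts together with the lexicographical visit order determines $\tau$ uniquely. Conversely, the sequences arising as Lukasiewicz paths of trees with $\zt=n$ are exactly the integer paths $w_0,\ldots,w_n$ with $w_0=0$, jumps in $\{-1,0,1,\ldots\}$, $w_i\ge 0$ for $i<n$, and $w_n=-1$, i.e.\ the paths of a $\nu$-walk which first hit $-1$ at time $n$.

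Next, for a fixed tree $\tau_0$ with $n$ vertices and Lukasiewicz path $w=\W(\tau_0)$, the definition of $\Pmu$ yields
\[
\Pmu[\tau=\tau_0]=\prod_{u\in\tau_0}\mu(k_u(\tau_0))=\prod_{i=0}^{n-1}\mu(w_{i+1}-w_i+1)=\prod_{i=0}^{n-1}\nu(w_{i+1}-w_i),
\]
which is exactly the probability that the $\nu$-walk $W$ takes the path $(w_0,\ldots,w_n)$. Because the bijection pairs each admissible path with a unique tree, this identifies the pushforward of $\Pmu$ by $\W$ with the law of $(W_0,\ldots,W_{\zeta_1})$, settling the case $j=1$.

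For general $j$, the forest Lukasiewicz path is by definition the concatenation of the individual paths shifted so that the $i$-th tree's contribution runs from level $-(i-1)$ down to level $-i$, so the concatenated path starts at $0$, first hits $-j$ at time $\z(\bf)=\z(\tau_1)+\cdots+\z(\tau_j)$, and the piece between the hitting times of $-(i-1)$ and $-i$ is the Lukasiewicz path of $\tau_i$ shifted by $-(i-1)$. Under $\Pmuj$ the trees are independent, so these pieces are independent; by the case $j=1$ each piece has the law of $W$ run from its current value until it decreases by one, which by the strong Markov property of $W$ at the successive hitting times of $-1,-2,\ldots,-j$ matches the law of $(W_0,\ldots,W_{\zeta_j})$. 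The statement about the total progeny follows immediately by evaluating the stopping time. The only real content is the identity $\prod_u \mu(k_u)=\prod_i \nu(\Delta w_i)$ combined with the bijective coding; there is no genuine obstacle, only careful bookkeeping of the forest concatenation convention.
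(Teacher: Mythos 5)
Your argument is correct: the product identity $\Pmu[\tau=\tau_0]=\prod_{i}\nu(w_{i+1}-w_i)$ combined with the bijective coding, followed by the concatenation and strong Markov step for forests, is exactly the standard proof of this fact. The paper itself gives no argument but simply cites Le Gall's \emph{Random trees and applications} (Proposition 1.5 there), and your proof reproduces that reference's reasoning.
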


\begin{proof}See \cite[Proposition 1.5]{RandomTrees}.\end{proof}

Note that the previous proposition applied with $j=1$ entails that the Lukasiewicz path of a Galton-Watson tree is distributed as the random walk $W$ stopped when it hits $-1$ for the first time. We conclude this subsection by giving a link between the height
function and the Lukasiewicz path (see \cite[Prop. 1.2]{RandomTrees}
for a proof).

\begin{prop}Let $\tau$ be a tree. Then, for every $ 0 \leq n < \zeta(\tau)$:
\begin{equation}\label{eq:Hmes}H_n(\tau)=\card \left\{ 0 \leq j < n; \, \W_j(\tau)=\inf_{j \leq k \leq n} \W_k(\tau)\right\}.\end{equation}\end{prop}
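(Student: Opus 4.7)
The plan is to reduce the identity to a statement about ancestors. Since $H_n(\tau) = |u(n)|$ equals the number of strict ancestors of $u(n)$, it suffices to prove the following claim: for every $0 \leq j < n < \zeta(\tau)$, the vertex $u(j)$ is a strict ancestor of $u(n)$ if and only if $\W_j(\tau) = \inf_{j \leq k \leq n} \W_k(\tau)$. The result then follows by summing over $j$.

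To prove this claim, I would interpret the Lukasiewicz path via the depth-first traversal producing the lexicographical order. Define a stack $S_k$ as follows: $S_0$ contains only the root, and at each step one pops the top element (which turns out to be $u(k)$) and pushes its children in reverse lexicographical order (leftmost on top). A direct induction using the identity $|S_{k+1}| = |S_k| - 1 + k_{u(k)}(\tau)$ yields $|S_k| = \W_k(\tau) + 1$ for every $0 \leq k \leq \zeta(\tau)$.

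The key observation is that the portion of $S_j$ strictly below $u(j)$ consists precisely of the pending vertices lying outside the subtree rooted at $u(j)$; call this bottom portion $B_j$, so that $|B_j| = \W_j(\tau)$. Writing $N_j$ for the cardinality of the subtree rooted at $u(j)$, the DFS explores exactly this subtree during the time interval $[j, j + N_j)$, so $B_j$ stays untouched while only descendants of $u(j)$ are pushed or popped above it. Consequently, for every $k \in [j, j + N_j)$ one has $|S_k| \geq 1 + |B_j|$, that is $\W_k(\tau) \geq \W_j(\tau)$; and at time $k = j + N_j$ the subtree has been entirely explored, so $S_k = B_j$, giving $\W_k(\tau) = \W_j(\tau) - 1$.

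Combining these two facts settles the claim. Indeed, if $u(j)$ is a strict ancestor of $u(n)$ then $j < n < j + N_j$, so $\W_k(\tau) \geq \W_j(\tau)$ throughout $[j, n]$ and the infimum is attained at $j$. Conversely, if $u(j)$ is not an ancestor of $u(n)$ (with $j<n$), then $u(n)$ lies outside the subtree of $u(j)$, forcing $n \geq j + N_j$; but $\W_{j + N_j}(\tau) = \W_j(\tau) - 1$, so $\W_j(\tau)$ strictly exceeds the infimum over $[j, n]$. The only step requiring genuine care is the bookkeeping identifying $B_j$ as the portion of the stack below $u(j)$ and verifying that it remains untouched throughout the exploration of the subtree of $u(j)$; everything else is routine.
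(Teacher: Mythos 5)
Your proof is correct. The paper does not actually prove this proposition; it simply cites \cite[Prop.\ 1.2]{RandomTrees}, where the argument rests on the same key equivalence you isolate: for $j<n$, the vertex $u(j)$ is a (strict) ancestor of $u(n)$ if and only if $\W_j(\tau)=\inf_{j\leq k\leq n}\W_k(\tau)$. Your stack bookkeeping is a sound way to establish that equivalence: the identity $|S_k|=\W_k(\tau)+1$, the fact that the lexicographical order visits the subtree of $u(j)$ over the consecutive indices $j,\dots,j+N_j-1$ so that the bottom portion $B_j$ of the stack is untouched there, and the two consequences $\W_k\geq\W_j$ on $[j,j+N_j)$ and $\W_{j+N_j}=\W_j-1$ together give both implications. (Le Gall's cited proof phrases the same combinatorics via the interpretation of $\W_n$ as the number of ``younger brothers'' of the vertices on the ancestral line of $u(n)$, which is exactly your set $B_j$ read off the tree rather than the stack; the two are equivalent.) No gap.
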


\subsection{The Cyclic Lemma}

We now state the Cyclic Lemma which is crucial in the derivation of
the joint law of $(\zt,\lt)$ under $\Pmu$. For integers $1 \leq j \leq p$, define:
$$\S_p^{(j)}= \{ (x_1,\ldots,x_p) \in \{-1,0,1,2, \ldots\}^p ; \,
\sum_{i=1}^p x_i=-j\}$$ and
$$\Sb_p^{(j)}= \{ (x_1,\ldots,x_p) \in \S_p^{(j)} ; \, \sum_{i=1}^m x_i >-j
\textrm{ for all } m \in \{0,1,\ldots,p-1\}\}.$$ For $\bx=(x_1, \ldots,x_p)
\in\S_p^{(j)}$ and $i \in \Z/p\Z$, denote by $\bx^{(i)}$ the $i$-th
cyclic shift of $\bx$ defined by $x^{(i)}_k=x_{i+k \mod p}$ for $1
\leq k \leq p$. For $\bx \in \S_p^{(j)}$, finally set:
$$\I_{\textrm{\bx}}= \left\{ i \in \Z/p\Z; \, \bx^{(i)} \in \Sb_p^{(j)}
\right\}.$$
The so-called Cyclic Lemma states that we have $\Card (\I_{\textnormal{\bx}} )= j$ for every $\textnormal{\bx} \in
\S_p^{(j)}$ (see \cite[Lemma 6.1]{Pitman} for a proof). 

Let $(W_n; n \geq 0)$  and $ \zeta_j$ be as in Proposition \ref{prop:RW}. Define
$\Lambda(k)$ by $\Lambda(k)= \Card \{ 0 \leq i \leq k-1 ; \, W_{i+1}-W_i=-1\}$.
Let finally $n,p \geq 1$ be positive integers. From the Cyclic Lemma and the fact that for all $k \in \Z/p\Z$ one has $ \Card \{ 1 \leq i \leq p ; \, x_i=-1
\}=\Card \{ 1 \leq i \leq p ; \, x^{(k)}_i=-1 \}$, it is a simple matter to deduce that:
\begin{equation}
\label{eq:kemperman}
\P[ \z_j =p, \Lambda(p) =n ] = \frac{j}{p} \P[W_p=-j, \Lambda(p)=n].
\end{equation}
See e.g. \cite[Section 6.1]{Pitman} for similar arguments. Note in particular that we have $\P[ \z_j =p] =  j \P[W_p=-j]/p $. This result allows us to derive the joint law of $(\zt,\lt)$ under $\Pmu$:

\begin{prop}\label{prop:K}Let $j$ and $n \leq p$
be positive integers. We have:
$$\Pmuj[\z(\bf)=p, \l(\bf)=n]=\frac{j}{p} \, \P[S_p=n] \, \P[W'_{p-n}=n-j].$$
where $S_p$ is the sum of $p$ independent Bernoulli random variables
of parameter $\mu_0$ and $W'$ is the random walk started from $0$
with nonnegative jumps distributed according to $ \eta(i)=\mu(i+1)/(1-\mu_0)$ for every $i\geq 0$.
\end{prop}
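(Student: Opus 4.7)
The plan is to reduce everything to the random-walk picture via Proposition \ref{prop:RW}, use the already-stated consequence (\ref{eq:kemperman}) of the Cyclic Lemma, and then decompose the resulting random-walk probability according to the locations of the $-1$ jumps.

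More precisely, I would proceed as follows. First, by Proposition \ref{prop:RW}, the Lukasiewicz path of a forest of $j$ independent $\GW_\mu$ trees has the same law as $(W_0,W_1,\ldots,W_{\z_j})$. Under this identification, the total progeny $\z(\bf)$ corresponds to $\z_j$, and the total number of leaves $\l(\bf)$ corresponds to the number of $-1$ increments of $W$ before time $\z_j$, that is, to $\Lambda(\z_j)$. Hence
$$\Pmuj[\z(\bf)=p,\l(\bf)=n]=\P[\z_j=p,\,\Lambda(p)=n],$$
and by (\ref{eq:kemperman}) this equals $\frac{j}{p}\P[W_p=-j,\,\Lambda(p)=n]$.

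Next I would compute $\P[W_p=-j,\,\Lambda(p)=n]$ by conditioning on the set $I\subset\{1,\ldots,p\}$ of indices where the increment of $W$ equals $-1$. For a fixed $I$ with $|I|=n$, the probability that the increments indexed by $I$ are all equal to $-1$ is $\mu_0^n$; conditionally on this, the remaining $p-n$ increments are i.i.d.\ with the conditional law of $\nu$ given that it is nonnegative, which is precisely $\eta(i)=\mu(i+1)/(1-\mu_0)$. The event $\{W_p=-j\}$ then forces these $p-n$ nonnegative increments to sum to $n-j$. Summing over the $\binom{p}{n}$ possible sets $I$ gives
$$\P[W_p=-j,\,\Lambda(p)=n]=\binom{p}{n}\mu_0^n(1-\mu_0)^{p-n}\,\P[W'_{p-n}=n-j].$$
Recognizing $\binom{p}{n}\mu_0^n(1-\mu_0)^{p-n}=\P[S_p=n]$ and combining with the previous display yields the announced formula.

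There is no real obstacle: the only subtle point is keeping track of the fact that the $p-n$ non-$(-1)$ increments, once the positions of the $-1$ jumps are fixed, are genuinely i.i.d.\ with law $\eta$, which is the definition of the walk $W'$. The appeal to (\ref{eq:kemperman}) takes care of the combinatorial cyclic-shift factor $j/p$, so the proof reduces to this elementary decomposition.
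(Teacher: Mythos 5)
Your proposal is correct and follows essentially the same route as the paper: reduce to the random walk via Proposition \ref{prop:RW}, apply \eqref{eq:kemperman} for the factor $j/p$, and decompose $\P[W_p=-j,\Lambda(p)=n]$ over the positions of the $-1$ increments, recognizing that the remaining increments are i.i.d.\ with law $\eta$ and that the combinatorial factor reassembles into $\P[S_p=n]$. The only cosmetic difference is that you make the factor $\binom{p}{n}\mu_0^n(1-\mu_0)^{p-n}$ explicit, whereas the paper keeps it as a product of probabilities inside the sum.
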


\begin{proof}
Using  Proposition \ref{prop:RW} and \eqref{eq:kemperman} , write $ \Pmuj[\z(\bf)=p, \l(\bf)=n]= j \P[\Lambda(p)=n, W_p=-j]/p$. To simplify notation, set $X_i =W_ {i}-W_ {i-1}$ for $i \geq 1$ and note that:
\begin{eqnarray*}
 \P[\Lambda(p)=n, W_p=-j] &=& \sum_ {1 \leq i_1 < \cdots < i_n \leq p}  \Pr{ X_i=-1, \,\forall i \in \{ i_1, \ldots, i_n\} }   \\
 && \qquad \qquad \qquad \cdot \, \Pr { \sum_ {i \not \in \{ i_1, \ldots, i_n\}} X_i=n-j; \quad X_i>-1, \, \forall i \not \in \{ i_1, \ldots, i_n\}}.
\end{eqnarray*}
The last probability is equal to $ \Pr {W'_ {p-n}=n-j} \Pr {X_i > -1, \, \forall i \not \in \{ i_1, \ldots, i_n\}}$ and it follows that:
\begin{equation}
\label{eq:LambdaUtile} \Pr { \Lambda(p)=n, W_p = -j} = \Pr {W' _ {p-n}=n-j} \Pr {S_p=n},
\end{equation}
giving the desired result.\end{proof}

\subsection{Slowly varying functions}

Slowly varying functions appear in the study of domains of
attractions of stable laws. Here we recall some properties of these
functions in view of future use.

Recall that a nonnegative measurable function $L: \R_+ \rightarrow
 \R_+$ is said to be slowly varying if, for every $t>0$, $ L(tx)/L(x)
\rightarrow 1$ as $x \rightarrow \infty$. A useful result concerning
these functions is the so-called Representation Theorem, which
states that a function $L: \R_+ \rightarrow \R_+$ is slowly varying
if and only if it can be written in the form:
$$L(x)=c(x) \exp \left( \int_1^x \frac{\e(u)}{u} du\right), \qquad x \geq 0,$$
where $c$ is a nonnegative measurable function having a finite positive
limit at infinity and $\e$ is a measurable function tending to $0$
at infinity. See e.g. \cite[Theorem 1.3.1]{Bingham} for a proof. The
following result is then an easy consequence.

\begin{prop}\label{prop:slow}Fix $\epsilon>0$ and let  $L: \R_+ \rightarrow \R_+$ be a
slowly varying function.
\begin{enumerate}
\item[(i)]We have $x^{\e} L(x) \rightarrow \infty$ and $x^{-\e} L(x) \rightarrow 0$ as $x \rightarrow \infty$.
\item[(ii)] There exists a constant $C>1$ such that $\frac{1}{C}x^{-\e}\leq{L(nx)}/{L(n)} \leq Cx^\e$ for every
integer $n$ sufficiently large and $x \geq 1$.
\end{enumerate}
\end{prop}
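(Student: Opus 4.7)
The plan is to derive both claims directly from the Representation Theorem recalled just before the proposition, writing
\[
L(x) = c(x) \exp\!\left( \int_1^x \frac{\e(u)}{u} du\right)
\]
with $c(x) \to c_\infty \in (0,\infty)$ and $\e(u) \to 0$ as $u \to \infty$. The whole proof consists in controlling the exponential integral by using that $\e$ is small in the tail.

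For part (i), I would fix $\delta \in (0,\e)$ and choose $A>1$ so large that $|\e(u)| \leq \delta$ for all $u \geq A$. Splitting $\int_1^x = \int_1^A + \int_A^x$, the first piece is a bounded constant $K$, while $\bigl|\int_A^x \e(u)/u \, du\bigr| \leq \delta (\log x - \log A)$. Exponentiating gives bounds of the form
\[
c_1 x^{-\delta} \leq L(x) \leq c_2 x^{\delta}
\]
for $x$ large, where the factor $c(x)$ is absorbed into the constants because it converges to a finite positive limit. Multiplying by $x^{\e}$ or $x^{-\e}$ and using $\delta < \e$ yields (i) immediately.

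For part (ii), I would again pick $A$ so large that $|\e(u)| \leq \e$ for $u \geq A$. Then for every integer $n \geq A$ and every $x \geq 1$,
\[
\frac{L(nx)}{L(n)} = \frac{c(nx)}{c(n)} \exp\!\left( \int_n^{nx} \frac{\e(u)}{u} du \right),
\]
and the integral is bounded in absolute value by $\e \cdot (\log(nx) - \log n) = \e \log x$. Since $c$ has a finite positive limit at infinity, there is a constant $C_0 > 1$ with $C_0^{-1} \leq c(nx)/c(n) \leq C_0$ for all $n$ large enough and $x \geq 1$. Taking $C = C_0$ gives the desired two-sided bound $C^{-1} x^{-\e} \leq L(nx)/L(n) \leq C x^{\e}$.

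There is no real obstacle here: once the Representation Theorem is invoked, everything reduces to the elementary observation that $|\int_n^{nx} \e(u)/u \, du| \leq \e \log x$ when $\e(u)$ is uniformly small on $[n,\infty)$. The only minor care needed is to make the constant $C$ in (ii) genuinely uniform in both $n$ (large) and $x \geq 1$, which is handled by separately bounding the $c$-factor and the exponential factor.
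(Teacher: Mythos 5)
Your proof is correct and follows exactly the route the paper intends: the paper gives no explicit proof but states the proposition as "an easy consequence" of the Representation Theorem, and your argument — bounding $|\e(u)|$ by a small constant in the tail so that the exponential integral contributes at most a factor $x^{\pm\delta}$ (resp. $x^{\pm\e}$), while the $c$-factor is absorbed into constants — is the standard and intended derivation. The one point requiring care, uniformity of the constant in (ii) over $n$ large and $x\geq 1$, is handled properly since $nx\geq n$ keeps both arguments of $c$ in the range where $c$ is close to its positive limit.
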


\subsection{The Local Limit Theorem}
\label{sec:LLT}

\begin{defn}A subset $A \subset \Z$ is said to be lattice if there exist $b \in \Z$ and $d  \geq 2$ such that
$A \subset  b+ d \Z$. The largest $d$ for which this statement holds
is called the span of $A$. A measure on $\Z$ is said to be lattice
if its support is lattice, and a random variable is said to be
lattice if its law is lattice.\end{defn}

\begin{rem}\label{rem:non-lattice}Since $\mu$ is supposed to be critical and aperiodic, using the fact that
$\mu(0)>0$, it is an exercise to check that the probability measure
$\nu$ is non-lattice.\end{rem}

Recall that $(X_t)_{t \geq 0}$ is the spectrally positive Lévy process with Laplace exponent $ \E[ \exp(- \lambda X_t)]=\exp(t \lambda ^ \theta)$ and $p_1$ is the density of $X_1$. When $ \theta=2$, we have $p_1(x)= e^ {-x^2/4}/ \sqrt {4 \pi}$. It is well known that $p_1$ is positive, continuous and bounded (see e.g. \cite[I. 4]{Zolotarev}). The following theorem will allow us to find estimates for the
probabilities appearing in Proposition \ref{prop:K}.

\begin{thm}[Local Limit Theorem]\label{thm:locallimit}
Let $(W_n)_{n \geq 0}$ be a random walk on $\Z$ started from $0$
such that its jump distribution is in the domain of attraction of a
stable law of index $\theta \in (1,2]$. Assume that $W_1$ is
non-lattice and that $\P[W_1<-1]=0$. Set $K(x)=\E[W_1^2 1_{|W_1|\leq x}]$ for $x \geq 0$. Let $ \sigma^2$ be the variance of $W_1$ and set:
$$  \begin {cases} \d 
a_n = \sigma \sqrt{n/2} & \textrm {if  $ \sigma^2 < \infty$,} \\
\d a_n = (\Gamma(1-\theta))^{1/\theta} \inf \left\{ x \geq 0; \, \Pr {W_1>x} \leq
\frac{1}{n}\right\} & \textrm {if $ \sigma^2= \infty$ and $ \theta<2$}, \\
\d a_n = \sqrt{n \, K \left( \sup  \left\{ z \geq  0; \frac{K(z)}{z^2}\geq \frac{1}{n} \right\}\right)} & \textrm {if $ \sigma^2= \infty$ and $ \theta=2$},
\end {cases}$$
with the convention $ \sup \emptyset = 0$.
\begin {enumerate}
\item[(i)] The random variable $(W_n - n \E[W_1])/a_n$ converges in distribution towards $X_1$.
\item [(ii)]We have $a_n=n^{1/\theta} L(n)$ where $L: \R_+ \rightarrow \R_+$ is slowly varying.
\item[(iii)] We have $ \d \lim_{n \rightarrow \infty} \sup_{k \in \Z}
\left| a_n \P[W_n=k]-p_1\left( \frac{k-n \E[W_1]}{a_n}\right)
\right|=0$. 
\end {enumerate}
\end{thm}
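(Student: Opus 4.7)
Parts (i)--(iii) are classical, and I would assemble them from the standard domain-of-attraction theory together with a Fourier-inversion argument for (iii). Set $\phi(\xi):=\E[e^{i\xi W_1}]$, $m:=\E[W_1]$, and $\psi(\xi):=\E[e^{i\xi X_1}]$; from $\E[e^{-\lambda X_1}]=e^{\lambda^\theta}$ one reads $\psi(\xi)=\exp(-\xi^2)$ for $\theta=2$ and $\psi(\xi)=\exp(-|\xi|^\theta e^{-i\pi\theta\,\mathrm{sgn}(\xi)/2})$ for $\theta<2$, so in both cases $|\psi(\xi)|=e^{-c|\xi|^\theta}$ with $c>0$. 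Denote $\Phi_n(\xi):=\E[\exp(i\xi(W_n-nm)/a_n)]=e^{-inm\xi/a_n}\phi(\xi/a_n)^n$.

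For (i) and (ii) I would verify case-by-case that the sequence $a_n$ given in the statement matches the classical normalization. In the finite-variance case the usual CLT gives $(W_n-nm)/(\sigma\sqrt n)\Rightarrow N(0,1)$ and $X_1\sim N(0,2)$, so $a_n=\sigma\sqrt{n/2}$ is correct and (ii) is trivial. If $\sigma^2=\infty$ and $\theta<2$, Gnedenko's theorem prescribes $n\P[W_1>a_n]\to 1/\Gamma(1-\theta)$, which is exactly the definition given; the asymptotic-inverse lemma for regularly varying functions (e.g.\ \cite[Thm.~1.5.12]{Bingham}) converts the tail $\P[W_1>x]=x^{-\theta}L(x)$ of index $-\theta$ into a normalizing sequence of index $1/\theta$, yielding (ii). If $\sigma^2=\infty$ and $\theta=2$, the truncated second moment $K$ is slowly varying and the standard normalization is characterized by $nK(a_n)/a_n^2\to 1$, which matches the formula in the statement and again gives $a_n=n^{1/2}L(n)$ by the same inversion lemma.

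The substantive part is (iii). Since $W_1$ is non-lattice and integer-valued, Fourier inversion gives, for every $k\in\Z$,
\[
a_n\P[W_n=k]=\frac{1}{2\pi}\int_{-\pi a_n}^{\pi a_n}e^{-i(k-nm)\xi/a_n}\Phi_n(\xi)\,d\xi,\qquad p_1\!\left(\tfrac{k-nm}{a_n}\right)=\frac{1}{2\pi}\int_{\R}e^{-i(k-nm)\xi/a_n}\psi(\xi)\,d\xi.
\]
Subtracting and taking the supremum in $k$, the quantity in (iii) is bounded by
\[
\frac{1}{2\pi}\int_{-\pi a_n}^{\pi a_n}|\Phi_n(\xi)-\psi(\xi)|\,d\xi+\frac{1}{2\pi}\int_{|\xi|>\pi a_n}|\psi(\xi)|\,d\xi,
\]
the second piece being $o(1)$ by integrability of $|\psi|$. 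For the first I would split $[-\pi a_n,\pi a_n]$ into $|\xi|\le A$, $A<|\xi|\le\delta a_n$, and $\delta a_n<|\xi|\le\pi a_n$. On the compact piece (i) and L\'evy's continuity theorem give $\Phi_n\to\psi$ pointwise, and dominated convergence handles this part. On the medium piece, the standard Taylor expansion of $\log\phi$ near $0$ (the one underlying (i)) yields the uniform bound $|\Phi_n(\xi)|\le e^{-c'|\xi|^\theta}$ for $n$ large and some $c'>0$, so this contribution is made arbitrarily small by choosing $A$ large.

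The main obstacle is the outer piece $\delta a_n<|\xi|\le\pi a_n$, where the non-lattice hypothesis is essential. By Remark~\ref{rem:non-lattice} $|\phi(\eta)|<1$ for every $\eta\in[-\pi,\pi]\setminus\{0\}$, so by continuity and compactness $\rho(\delta):=\sup_{\delta\le|\eta|\le\pi}|\phi(\eta)|<1$. Hence $|\Phi_n(\xi)|=|\phi(\xi/a_n)|^n\le\rho(\delta)^n$ on the outer range, and the outer integral is bounded by $2\pi a_n\rho(\delta)^n$, which is $o(1)$ since $a_n$ grows only polynomially in $n$ by (ii). Combining the three bounds, first letting $n\to\infty$ for fixed $A,\delta$ and then $A\to\infty$, concludes (iii).
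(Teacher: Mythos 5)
Your proposal is correct, and it is in fact more self-contained than the paper's own proof, which is almost entirely a matter of citation: the paper obtains (i) for $\theta<2$ by verifying Feller's criterion $nK(a_n)/a_n^2\to\theta/((2-\theta)\Gamma(1-\theta))$ on the truncated second moment (rather than the tail criterion $n\P[W_1>a_n]\to 1/\Gamma(1-\theta)$ you invoke — the two are equivalent via the relation $K(x)\sim \tfrac{\theta}{2-\theta}x^2G(x)$), refers to the proof of \cite[Theorem 2.6.2]{IL} for the case $\sigma^2=\infty$, $\theta=2$, derives (ii) from $a_{kn}/a_n\to k^{1/\theta}$ via de Haan's theorem rather than your asymptotic-inversion argument (both are legitimate routes through \cite{Bingham}), and simply cites \cite[Theorem 4.2.1]{IL} for (iii). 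Your Fourier-inversion sketch of (iii) is the standard Gnedenko argument behind that citation, and the decomposition into compact, medium and outer ranges is exactly right; the only point where you gloss over real work is the medium-range bound $|\Phi_n(\xi)|\le e^{-c'|\xi|^\theta}$ for $A<|\xi|\le\delta a_n$, which in the infinite-variance case does not come from a Taylor expansion of $\log\phi$ but from the regular variation of $1-|\phi(\eta)|^2$ near $0$ combined with a Potter bound (one in fact only gets $e^{-c'|\xi|^{\theta-\epsilon}}$ uniformly, which is still integrable and suffices). One further small remark: both you and the paper write $\Gamma(1-\theta)$, which is negative for $\theta\in(1,2)$, where $|\Gamma(1-\theta)|$ is meant; this sign convention is inherited from the theorem statement and is not an error of yours.
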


\begin{proof} First note that $\E[|W_1|]< \infty$ since $\theta>1$ (this is a
consequence of \cite[Theorem 2.6.1]{IL}).  

We start with (i). The case $ \sigma^2 < \infty$ is the classical central limit theorem. Now assume that $ \sigma ^2 = \infty$ and $ \theta < 2$. Write $G(x)= \P[|W_1|>x]$ for
$x \geq 0$  and introduce $a'_n=\inf \left\{ x \geq 0; \,
G(x) \leq {1}/{n}\right\}$, so that $a_n=
(\Gamma(1-\theta))^{1/\theta} a'_n$ for $n$ sufficiently large.  By \cite[Formula
3.7.6]{Durrett}, we have $nG(a'_n) \rightarrow 1$. By definition of
the domain of attraction of a stable law, there
exists a slowly varying function $L: \R_+ \rightarrow \R_+$ such
that $G(x)=L(x)/x^\theta$. Hence $G(a_n) \sim 1/(n
\Gamma(1-\theta)) $. Next, by \cite[Section
XVII (5.21)]{Feller} we have $K(x) \sim  x^2 G(x) {\theta}/(2-\theta)$ as $x \rightarrow \infty$. Hence:
$$\frac{nK (a_n)}{a_n^2} \quad\sim \quad\frac{n}{a_n^2} \frac{\theta}{2-\theta}
a_n^2 G(a_n) \quad\sim \quad\frac{\theta}{(2-\theta)\Gamma(1-\theta)}.$$ From
\cite[Section XVII.5, Theorem 3]{Feller}, we now get that $(W_n - n
\E[W_1])/a_n$  converges in distribution to $X_1$.
Finally, in the case $ \sigma ^2 = \infty$ and $ \theta = 2$, assertion (i) is a straightforward consequence of the proof of Theorem 2.6.2 in \cite{IL}.

We turn to the proof of (ii). By \cite[p. 46]{IL}, for every integer $k
\geq 1$, $a _ {kn} / a_n \rightarrow k^{1/\theta}$ as $n \rightarrow \infty$. Since $(a_n)$ is increasing, by a theorem of de Haan (see \cite[Theorem 1.10.7]{Bingham}), this
implies that there exists a slowly varying function $L: \R_+
\rightarrow \R_+$ such that $a_n=L(n) n^{1/\theta}$ for every
positive integer $n$.

Assertion (iii) is the classical local limit theorem (see \cite[Theorem 4.2.1]{IL}).
\end{proof}

In the case $ \sigma ^2= \infty$ and $ \theta=2$, note that $L(n) \rightarrow \infty$ as $n \rightarrow \infty$ and that $L$ can be chosen to be increasing.
\medskip

Assume that $ \mu$ satisfies $(H_ \theta)$ for a certain $ \theta \in (1,2]$. Let $(W_n)_{n \geq 0}$ be a random walk started from $0$ with jump
distribution $\nu$. Since $\mu$ is in the domain of attraction of a
stable law of index $\theta$, it follows that $\nu$ is also in this
domain of attraction. Moreover, $ \Es {W_1}=0$ and $W_1$ is not lattice by Remark
\ref{rem:non-lattice}. Let $ \sigma^2$ be the variance of $ W_1$ and define $B_n$ to be equal to the quantity $a_n$ defined in Theorem \ref{thm:locallimit}. Then, as $n \rightarrow \infty$, $W_n/B_n$ converges in distribution towards $X_1$. In what follows, $h: \R_+ \rightarrow \R_+$
will stand for a slowly varying function such that $B_n=h(n)n
^{1/\theta}$. 

\begin{lem}\label{lem:locallimit}We have:
$$\Prmu{\zeta(\tau)=n} \quad\mathop{\sim}_{n \rightarrow \infty} \quad\frac{p_1(0)}{n^{1/\theta+1} h(n)}, \qquad \Prmu{\zeta(\tau) \geq n} \quad\mathop{\sim}_{n \rightarrow \infty}\quad \frac{\theta p_{1}(0)}{n^{1/\theta} h(n)}.$$
\end{lem}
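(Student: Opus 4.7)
The plan is to obtain the pointwise estimate first and then sum it via a Karamata-type argument to get the tail estimate.

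For the first asymptotic, I would start from Proposition \ref{prop:RW} applied with $j=1$: the total progeny $\zeta(\tau)$ has the same law as $\zeta_1 = \inf\{n \geq 0 : W_n = -1\}$ under $\Pmu$. The cyclic identity \eqref{eq:kemperman} (in the simpler form stated right after it, namely $\P[\zeta_j = p] = (j/p)\P[W_p = -j]$) specializes to
$$\Prmu{\zeta(\tau) = n} \;=\; \frac{1}{n}\, \P[W_n = -1].$$
Since $\nu$ has zero mean and is non-lattice (Remark \ref{rem:non-lattice}), Theorem \ref{thm:locallimit}(iii) applies with $a_n = B_n = h(n) n^{1/\theta}$, giving
$$B_n \P[W_n = -1] \;\longrightarrow\; p_1(0)$$
(here one uses that $-1/B_n \to 0$ and that $p_1$ is continuous at $0$). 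This yields $\P[W_n=-1] \sim p_1(0)/(h(n) n^{1/\theta})$, and dividing by $n$ gives the first claim.

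For the tail asymptotic, I would simply sum: $\Prmu{\zeta(\tau) \geq n} = \sum_{k \geq n} \Prmu{\zeta(\tau) = k}$. The summand is asymptotic to $p_1(0)/(k^{1/\theta+1} h(k))$, and since $1/h$ is slowly varying, Karamata's theorem (see \cite[Proposition 1.5.10]{Bingham}) gives
$$\sum_{k \geq n} \frac{1}{k^{1/\theta+1} h(k)} \;\sim\; \frac{\theta}{n^{1/\theta} h(n)}$$
as $n \to \infty$. Combining with the first estimate yields the second claim. To turn the asymptotic equivalence of the summands into an equivalence of the sums one can either invoke Karamata directly on the slowly varying factor, or split the sum at $n$ and $Nn$ for large $N$ and use Proposition \ref{prop:slow}(ii) to bound the tail ratio uniformly.

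The only delicate point is this last passage from term-by-term asymptotics to the tail sum in the presence of the slowly varying function $h$; everything else is a direct application of already-proved tools. Since Karamata's theorem handles exactly this situation for regularly varying sequences with negative index less than $-1$, there is no real obstacle and the argument reduces to routine invocations of the local limit theorem and standard slow-variation estimates.
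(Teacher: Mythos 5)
Your proposal is correct and follows essentially the same route as the paper, whose (one-line) proof likewise combines the identity $\Prmu{\zt=n}=\P[W_n=-1]/n$ with Theorem \ref{thm:locallimit} (iii) and then sums for the tail. Your extra care about passing from term-by-term asymptotics to the tail sum (via Karamata or the elementary squeeze $(1-\e)\sum_{k\geq n}b_k\leq\sum_{k\geq n}a_k\leq(1+\e)\sum_{k\geq n}b_k$) fills in exactly the detail the paper leaves implicit.
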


\begin{proof}This is an easy consequence
 of Theorem \ref {thm:locallimit} (iii) together with the fact that $ \Prmu { \zt =n}= \Pr {W_n=-1}/n$, as noticed before Proposition \ref {prop:K}.
\end{proof}

\begin{rem}\label{rem:lattice}In particular, $\P_\mu[\zeta(\tau)=n]>0$ for $n$
sufficiently large if $\mu$ is aperiodic. When $\mu$ is
periodic, if $d$ is the span of the support of $\mu$, one can check that for $n$ sufficiently large, one has $\Prmu{\zt=n}>0$ if and only if
$n=1 \textrm{ mod } d$.
\end{rem}

\section{A law of large numbers for the number of leaves}

In the sequel, we fix $ \theta \in (1,2]$ and consider a probability distribution $ \mu$ on $ \N$ satisfying hypothesis $ (H_ \theta)$. In this section, we show that if a $\GW_\mu$ tree has total
progeny equal to $n$, then it has approximatively $\mu_0 n$ leaves
with high probability. Intuitively, this comes from the fact that
each individual of a $\GW_\mu$ has a probability $\mu_0$ of being a
leaf. Conversely, we also establish that if a $\GW_\mu$ tree has
$n$ leaves, than it has approximatively $n/\mu_0$ vertices with high
probability.

\begin{defn}\label{def:Lambda}Consider a tree $\tau \in \T$ and let $(u(i),\, 0 \leq i \leq \zt-1)$ be the vertices of $\tau$
listed in lexicographical order and denote by $k_j$ the number of
children of $u(j)$. For $0 \leq s < \zt$ define $\Lambda_\tau (s)$
by $\Lambda_\tau(s)=\sum_{j=0}^{ \fl{s}}1_{\{k_j=0\}}$,
where $\fl{s}$ stands for the integer part of $s$. Set also $ \Lambda_\tau( \zt)= \lt$.\end{defn}

\begin{lem}\label{lem:phi}Let $(X_i)_{i \geq 1}$ be a sequence of independent identically distributed Bernoulli random variables
of parameter $\mu_0$. For $0 \leq x \leq 1$, define $\phi^*(x)=x \ln
\frac{x}{\mu_0}+(1-x) \ln \frac{1-x}{1-\mu_0}$. The following two
properties hold:
\begin{enumerate}
\item[(i)] For $a>0$ and $n \geq 1$:
$$\P\left[ \frac{1}{n}\sum_{k=1}^n X_k >\mu_0+a\right] \leq 2 e^{ -n \phi^\ast(\mu_0+a)}, \qquad \P\left[ \frac{1}{n} \sum_{k=1}^n X_k < \mu_0-a\right] \leq 2 e^{ -n \phi^\ast(\mu_0-a)}.$$
\item[(ii)] We have $ \d\phi^\ast(\mu_0+ x) =
\frac{1}{2 \mu_0(1-\mu_0)} x^2 + o(x^2)$ when $x \rightarrow 0$.
\end{enumerate}
\end{lem}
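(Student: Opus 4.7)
\medskip

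\noindent\textbf{Proof plan.} Both parts of the lemma are standard computations; the plan is simply to carry them out.

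\medskip

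For (i), the natural approach is the exponential Chernoff bound. Let $S_n = X_1 + \cdots + X_n$. Since each $X_k$ is Bernoulli$(\mu_0)$, its moment generating function is $\E[e^{\lambda X_1}] = \mu_0 e^\lambda + (1-\mu_0)$, and by independence $\E[e^{\lambda S_n}] = (\mu_0 e^\lambda + 1-\mu_0)^n$. Markov's inequality gives, for $\lambda > 0$,
$$\P\left[S_n > n(\mu_0+a)\right] \leq e^{-\lambda n(\mu_0+a)} \bigl(\mu_0 e^\lambda + 1-\mu_0\bigr)^n.$$
Optimizing over $\lambda > 0$ amounts to computing the Legendre transform of $\lambda \mapsto \log(\mu_0 e^\lambda + 1-\mu_0)$ at the point $\mu_0+a$. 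A direct computation (setting the $\lambda$-derivative to zero yields $e^\lambda = \frac{(\mu_0+a)(1-\mu_0)}{\mu_0(1-\mu_0-a)}$, valid for $0 < a < 1-\mu_0$) shows that this transform is exactly $\phi^\ast(\mu_0+a)$, so
$$\P\left[\tfrac{1}{n}S_n > \mu_0+a\right] \leq e^{-n\phi^\ast(\mu_0+a)},$$
which is stronger than the claimed bound (the factor $2$ provides a harmless safety margin, and in particular handles the trivially true case $a \geq 1-\mu_0$). The lower-tail inequality is obtained by the exact same argument applied to the variables $1-X_k$ (which are Bernoulli$(1-\mu_0)$), or equivalently by using $\lambda < 0$; by the symmetry $\phi^\ast(\mu_0-a) = \phi^\ast((1-\mu_0)+a)$ under the substitution $\mu_0 \leftrightarrow 1-\mu_0$, we get the same exponential rate.

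\medskip

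For (ii), it suffices to perform a second-order Taylor expansion of $\phi^\ast$ at $\mu_0$. Write $\phi^\ast(\mu_0+x) = f(x) + g(x)$ with
$$f(x) = (\mu_0+x)\log\frac{\mu_0+x}{\mu_0}, \qquad g(x) = (1-\mu_0-x)\log\frac{1-\mu_0-x}{1-\mu_0}.$$
A straightforward differentiation gives $f(0)=g(0)=0$, $f'(0)=1$, $g'(0)=-1$, $f''(0)=1/\mu_0$ and $g''(0)=1/(1-\mu_0)$. Hence $\phi^\ast(\mu_0)=0$, the first-order terms cancel, and
$$\phi^\ast(\mu_0+x) = \tfrac{1}{2}\bigl(\tfrac{1}{\mu_0}+\tfrac{1}{1-\mu_0}\bigr)x^2 + o(x^2) = \frac{x^2}{2\mu_0(1-\mu_0)} + o(x^2),$$
as claimed.

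\medskip

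There is no genuine obstacle here: the content is the classical Cramér bound for a Bernoulli sum together with a Taylor expansion of the associated rate function. The only point requiring a modicum of care is the interpretation of the inequality in (i) when $a \geq 1-\mu_0$ (the event in the upper tail becomes impossible) or when $a \geq \mu_0$ (same for the lower tail); in these regimes $\phi^\ast$ is defined by continuity as $+\infty$ and the inequality holds trivially, which is the real reason for the factor $2$ (and ultimately irrelevant since (ii) shows only the small-$a$ regime matters in the sequel).
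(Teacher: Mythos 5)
Your proof is correct; the paper itself gives no computation here, simply citing Remark (c) of Theorem 2.2.3 in Dembo--Zeitouni for (i) and leaving (ii) to the reader, and your Chernoff/Legendre-transform calculation (with the optimal $e^\lambda = \frac{(\mu_0+a)(1-\mu_0)}{\mu_0(1-\mu_0-a)}$ yielding exactly $\phi^\ast$) together with the second-order Taylor expansion is precisely the content behind that citation. Your remark that the factor $2$ is a harmless relic of the two-sided statement in the reference, and your handling of the degenerate regimes $a\geq 1-\mu_0$ and $a\geq\mu_0$, are both accurate.
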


\begin{proof}For the first assertion, see \cite[Remark (c) in Theorem 2.2.3]{Dembo}.
The second one is a simple calculation left to the reader.\end{proof}

\begin{defn}Let $\e>0$. We say that a sequence of positive numbers $(x_n)$ is $oe_{\epsilon}(n)$ if there
exist positive constants $c,C >0$ such that $x_n \leq
C e^{-cn^{\epsilon}}$ for all $n$ and we write
$x_n=oe_\epsilon(n)$.\end{defn}

\begin{rem} \label{rem:oe} It is easy to see that if $x_n =oe_\e(n)$ for some $\e>0$ then the sequence $(y_n)_{n \geq 1}$ defined by
$y_n=\sum_{k=n}^{\infty} x_k$ is also $oe_\e(n)$.\end {rem}

\begin{lem}\label{lem:GdesDev} Fix $0<\eta<1$ and
$\delta >0$.
\begin{enumerate}
\item[(i)] Let $(W_n =X_1+\cdots+X_n; n \geq 0)$ be a random walk started at $0$ with jump distribution $\nu(k)=\mu(k+1)$, $k \geq
-1$ under $\P$. Then:
$$\P\left[ \sup_{\eta \leq t \leq 1} \left|\frac{1}{nt}
\sum_{j=0}^{\fl{nt}}1_{\{X_j=-1\}} -\mu_0 \right| >
\frac{\delta}{n^{1/4}} \right]=oe_{1/2}(n).$$ \item[(ii)] For those values of $n$ such that $ \Prmu{ \zt=n}>0$ we have:
$$\Prmu{ \left. \sup_{\eta\leq t \leq 1} \left| \frac{\Lambda_\tau(nt)}{nt} - \mu_0\right|\geq
\frac{\delta}{n^{1/4}} \, \right | \, \zeta(\tau)=n } =
oe_{1/2}(n).$$
\end{enumerate}
\end{lem}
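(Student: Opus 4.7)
The plan is to establish (i) by a discretization argument combined with the Bernoulli large deviation bound of Lemma \ref{lem:phi}, and to deduce (ii) from (i) via a straightforward comparison argument relying on the polynomial lower bound on $\Pmu[\zt=n]$ provided by Lemma \ref{lem:locallimit}.

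For (i), note first that the indicator variables $1_{\{X_j=-1\}}$ are i.i.d.\ Bernoulli($\mu_0$), since $\nu(-1)=\mu(0)=\mu_0$. Fix $t \in [\eta,1]$ and set $m = \fl{nt}+1$ and $S(t) := \sum_{j=0}^{\fl{nt}} 1_{\{X_j=-1\}}$. The trivial bound $|S(t)/(nt) - S(t)/m| \leq 1/(nt) \leq 1/(n\eta)$, which is $o(1/n^{1/4})$, reduces the task to controlling $|S(t)/m - \mu_0|$. Applying Lemma \ref{lem:phi}(i) with $a = \delta/(2n^{1/4})$, combined with the quadratic expansion $\phi^\ast(\mu_0 \pm a) \sim a^2/(2\mu_0(1-\mu_0))$ from Lemma \ref{lem:phi}(ii) and the lower bound $m \geq n\eta$, yields, uniformly in $t \in [\eta,1]$,
\[
\P\bigl[|S(t)/m - \mu_0| > \delta/(2n^{1/4})\bigr] \leq 4\exp(-c\sqrt{n})
\]
for some $c = c(\delta,\mu_0,\eta) > 0$. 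To upgrade this to a supremum, I would introduce the grid $t_k = \eta + k(1-\eta)/N$ with $N = \fl{n^2}$ and exploit the monotonicity of $t \mapsto S(t)$: for $t \in [t_k, t_{k+1}]$, $S(t)/(nt)$ is sandwiched between $(1 - O(1/N)) S(t_k)/(nt_k)$ and $(1+O(1/N)) S(t_{k+1})/(nt_{k+1})$, with $O(1/N) = O(1/n^2)$ being negligible compared to $\delta/n^{1/4}$. A union bound over the $N+1$ grid points then gives a total probability $\leq 4(N+1)\exp(-c'\sqrt{n}) = oe_{1/2}(n)$, establishing (i).

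For (ii), Proposition \ref{prop:RW} and the identity $k_j = X_{j+1} + 1$ (read off from the definition of the Lukasiewicz path) show that the tree event $A_n := \{\sup_{\eta\leq t\leq 1}|\Lambda_\tau(nt)/(nt) - \mu_0| \geq \delta/n^{1/4}\}$ corresponds, under $\Pmu$, to a random walk event differing from the one in (i) only by a harmless index shift, whose probability is thus $oe_{1/2}(n)$. Writing
\[
\Pmu[A_n \mid \zt = n] \leq \frac{\Pmu[A_n]}{\Pmu[\zt = n]}
\]
and invoking Lemma \ref{lem:locallimit} to bound $\Pmu[\zt = n] \geq c/(n^{1/\theta+1}h(n))$, together with Proposition \ref{prop:slow}(i) to control the slowly varying function $h$, one sees that the polynomial denominator is absorbed into $e^{-c\sqrt{n}}$ without altering the $oe_{1/2}(n)$ conclusion. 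The main obstacle is the discretization step in (i): the grid must be fine enough that replacing $nt$ by $nt_k$ contributes less than $\delta/n^{1/4}$ to the error, yet coarse enough that the union bound over $N+1$ grid points preserves the $e^{-c\sqrt{n}}$ decay; a polynomial choice like $N = n^2$ comfortably balances both requirements.
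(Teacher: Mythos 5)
Your proof is correct and follows essentially the same route as the paper: the Bernoulli large-deviation bound of Lemma \ref{lem:phi} applied at scale $\delta n^{-1/4}$, a union bound over polynomially many points to handle the supremum, and for (ii) the crude bound $\Pmu[A_n \mid \zt=n] \leq \Pmu[A_n]/\Pmu[\zt=n]$ combined with the polynomial lower bound on $\Pmu[\zt=n]$ from Lemma \ref{lem:locallimit}. The only (immaterial) difference is that the paper discretizes by observing the supremum is attained over the integer indices $k=\fl{nt} \in [\eta n-1,n]$ directly, rather than over an auxiliary grid of $n^2$ points with a monotonicity sandwich.
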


\begin{proof}For the first assertion, define $Z_k=\left|\frac{1}{k}
\sum_{j=0}^{k}1_{\{X_j=-1\}} -\mu_0 \right|$ for $k \geq 1$. By
Lemma \ref{lem:phi} (ii), for $n$ sufficiently large we have
$ \phi^\ast\left(\mu_0 \pm {\delta}{n^{-1/4}}\right)>c
{n^{-1/2}},$ for some $c>0$. Since the random variables
$(1_{\{X_j=-1\}})_{j \geq 1}$ are independent Bernoulli random
variables of parameter $\mu_0$, for large $n$ and $k \geq  \fl{\eta n}$
we have by Lemma \ref {lem:phi} (i):
  $$\P [Z_k > \delta n^{- 1/4}]\leq 4\exp \left(- c \frac{k}{{n}^{1/2}} \right) \leq
  4 \exp\left(- c \frac{\eta n -1}{n^{1/2}}\right)
  \leq 4\exp\left(- \frac{c \eta}{2} n^{1/2}\right).$$ Therefore, for large enough $n$:
\begin{eqnarray*}\P\left[ \sup_{\eta \leq t \leq 1}
\left|\frac{1}{nt} \sum_{j=0}^{\fl{nt}}1_{\{X_j=-1\}}
-\mu_0 \right| > \frac{\delta}{n^{1/4}} \right]&\leq& \P\left[ \exists k \in [\eta n-1, n] \cap \N \textrm{ such that } \, Z_k >\frac{\delta}{n^{1/4}}\right]\\
&\leq&\sum_{k= \fl{\eta n}}^n \P\left [Z_k > \frac{\delta}{n^{1/4}} \right] \\
&\leq&   4 (1-\eta)n \exp \left(- \frac{c \eta}{2} n^{1/2} \right),
\end{eqnarray*}
which is $oe_{1/2}(n)$.

For the second assertion, introduce $\zeta=\inf\{n \geq 0; \,
W_n=-1\}$ and use Proposition \ref{prop:RW} which tells us that:
\begin{eqnarray*}\Pmu\left[ \left. \sup_{\eta\leq t
\leq 1} \left| \frac{\Lambda_\tau(nt)}{nt} - \mu_0\right|\geq
\frac{\delta}{n^{1/4}} \right| \zt =n \right]&=& \P \left[ \left.
 \sup_{\eta \leq t \leq 1} \left|\frac{1}{nt}
\sum_{j=0}^{\fl{nt}}1_{\{X_j=-1\}} -\mu_0 \right| >
\frac{\delta}{n^{1/4}} \,  \right| \,
\zeta=n\right]\\
&\leq& \frac{1}{ \P[ \zeta=n]}\P \left[
 \sup_{\eta \leq t \leq 1} \left|\frac{1}{nt}
\sum_{j=0}^{\fl{nt}}1_{\{X_j=-1\}} -\mu_0 \right| >
\frac{\delta}{n^{1/4}} \right].
\end{eqnarray*}
By (i), the last probability in the right-hand side is $oe_{1/2}(n)$ and by
Lemma \ref{lem:locallimit} combined with Proposition \ref{prop:slow} (ii), the quantity $\Pr { \zeta=n}= \Prmu { \zt=n}$ is bounded below by $n^ {-1/ \theta-2}$ for large $n$. The desired result follows.
\end{proof}

\begin{cor}\label{cor:GD2mieux}We have for every $ \eta \in (0,1)$ and $ \delta>0$:
$$\Prmu{ \left.\sup_{\eta\leq t \leq 1} \left| \frac{\Lambda_\tau( \zt t)}{\zt t} - \mu_0\right|\geq
\frac{\delta}{n^{1/4}} \, \right| \, \zeta(\tau) \geq n } =
oe_{1/2}(n).$$\end{cor}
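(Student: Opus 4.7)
The plan is to decompose the conditional probability according to the value of $\zeta(\tau)$ and reduce to Lemma \ref{lem:GdesDev}(ii). Write
$$
\Pmu\!\left[A \, \big| \, \zt \geq n\right] \;=\; \frac{1}{\Pmu[\zt \geq n]} \sum_{m \geq n} \Pmu\!\left[A \cap \{\zt = m\}\right],
$$
where $A$ denotes the event $\{\sup_{\eta \leq t \leq 1} |\Lambda_\tau(\zt t)/(\zt t) - \mu_0| \geq \delta/n^{1/4}\}$. Terms for which $\Pmu[\zt = m] = 0$ contribute $0$ and can be discarded; by Remark \ref{rem:lattice} (aperiodicity of $\mu$), all remaining $m \geq n$ have $\Pmu[\zt=m]>0$ once $n$ is large enough.

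The key observation is a monotonicity of the threshold: on $\{\zt = m\}$ with $m \geq n$, we have $\delta/n^{1/4} \geq \delta/m^{1/4}$, so that
$$
A \cap \{\zt = m\} \;\subseteq\; \left\{ \sup_{\eta \leq t \leq 1} \left| \frac{\Lambda_\tau(m t)}{m t} - \mu_0 \right| \geq \frac{\delta}{m^{1/4}} \right\} \cap \{\zt = m\}.
$$
Applying Lemma \ref{lem:GdesDev}(ii) with $m$ in place of $n$ gives constants $C, c > 0$ (depending only on $\eta, \delta$) such that
$$
\Pmu\!\left[A \cap \{\zt = m\}\right] \;\leq\; C \, e^{-c\, m^{1/2}} \, \Pmu[\zt = m]
$$
for every sufficiently large $m$.

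Since $e^{-c m^{1/2}} \leq e^{-c n^{1/2}}$ for $m \geq n$, summing over $m \geq n$ yields
$$
\sum_{m \geq n} \Pmu\!\left[A \cap \{\zt = m\}\right] \;\leq\; C\, e^{-c\, n^{1/2}} \sum_{m \geq n} \Pmu[\zt = m] \;=\; C\, e^{-c\, n^{1/2}} \, \Pmu[\zt \geq n].
$$
Dividing by $\Pmu[\zt \geq n]$ gives $\Pmu[A \mid \zt \geq n] \leq C e^{-c n^{1/2}}$, which is precisely $oe_{1/2}(n)$. No serious obstacle arises: the only thing to check carefully is the threshold monotonicity step, which makes the uniform exponential decay from Lemma \ref{lem:GdesDev}(ii) survive the summation without needing any lower bound on $\Pmu[\zt \geq n]$ beyond its positivity.
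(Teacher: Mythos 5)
Your proof is correct and follows essentially the same route as the paper: decompose over the values $k \geq n$ of $\zeta(\tau)$, use the monotonicity $\delta/n^{1/4} \geq \delta/k^{1/4}$ to pass to the event of Lemma \ref{lem:GdesDev} (ii) at level $k$, and sum the uniform bound $Ce^{-ck^{1/2}} \leq Ce^{-cn^{1/2}}$ against $\Prmu{\zt=k}/\Prmu{\zt \geq n}$. The paper leaves exactly these details to the reader, and you have filled them in correctly.
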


\begin{proof}To simplify notation, set $ \d A_n= \left \{\sup_{\eta\leq t \leq 1} \left| \frac{\Lambda_\tau( \zt
t)}{\zt t} - \mu_0\right|\geq \frac{\delta}{n^{1/4}} \right \}$. It suffices to notice that:
$$ \Prmu{ \left. A_n \, \right|\, \zeta(\tau)
\geq n }  \leq  \sum_{k=n}
^{\infty} \frac{\Prmu{\zt =k}}{ \Prmu{\zt\geq n}} \Prmu{ \left.
A_k\, \right| \, \zeta(\tau) =k },
$$
observing that the quantities $ \Prmu {A_k | \zt=k}$ are bounded by Lemma \ref {lem:GdesDev} (ii). Details are left to the reader.
\end{proof}

We have just shown that if a $\GW_\mu$ tree has total progeny $n$, then it has approximatively $\mu_0 n$ leaves and the
deviations from this value have exponentially small probability. Part (ii) of the
following crucial lemma provides a converse to this statement by proving that if a
$\GW_\mu$ tree has $n$ leaves, then the probability that its total
progeny does not belong to $[n/\mu_0 - n^{3/4}, n/\mu_0+n^{3/4}]$
decreases exponentially fast in $n$.

\begin{lem}\label{lem:GdesDev2}We have for $1 \leq j \leq n$ and $ \delta>0$:
\begin{enumerate}
\item[(i)]
$ \d \Pmuj\left[\left|\frac{\lambda(\bf)}{n}-\mu_0\right|>\frac{ \delta}{n^{1/4}} \textrm { and }
\zeta(\bf)=n \right]=oe_{1/2}(n),$ uniformly in $j$.
\item[(ii)]  $ \d\Pmuj\left[\lambda(\bf)=n \textrm{ and }
\left|\zeta(\bf)-\frac{n}{\mu_0}\right|>{\zeta(\bf)}^{3/4}\right]=oe_{1/2}(n),$
uniformly in $j$.
\end{enumerate}
\end{lem}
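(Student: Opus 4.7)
The plan is to reduce both statements to concentration estimates for sums of i.i.d.\ Bernoulli($\mu_0$) variables via the random walk representation of Proposition \ref{prop:RW} together with Lemma \ref{lem:phi}.

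For part (i), I would use that under $\Pmuj$ the Lukasiewicz path of $\bf$ is distributed as $(W_0,\ldots,W_{\zeta_j})$, where $\zeta_j=\inf\{n\geq 0 : W_n=-j\}$. On the event $\{\zeta(\bf)=n\}=\{\zeta_j=n\}$, the number of leaves $\lambda(\bf)$ equals $\Lambda(n)=\Card\{0\leq i\leq n-1 : W_{i+1}-W_i=-1\}$, which is the sum of $n$ i.i.d.\ Bernoulli($\mu_0$) variables. Dropping the event $\{\zeta_j=n\}$ then gives
$$\Pmuj\!\left[|\lambda(\bf)/n-\mu_0|>\delta n^{-1/4},\,\zeta(\bf)=n\right]\;\leq\;\P\!\left[|\Lambda(n)/n-\mu_0|>\delta n^{-1/4}\right].$$
Lemma \ref{lem:phi}(i) bounds the right-hand side by $4\exp\!\bigl(-n\,\phi^*(\mu_0\pm\delta n^{-1/4})\bigr)$, and Lemma \ref{lem:phi}(ii) shows $n\,\phi^*(\mu_0\pm\delta n^{-1/4})\sim\frac{\delta^2}{2\mu_0(1-\mu_0)}n^{1/2}$, so the bound is $oe_{1/2}(n)$. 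Crucially, this estimate does not involve $j$, which yields uniformity in $j$.

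For part (ii), the key observation I would exploit is that on $\alpha_n:=\{\lambda(\bf)=n,\,|\zeta(\bf)-n/\mu_0|>\zeta(\bf)^{3/4}\}$, we automatically have $\lambda(\bf)=n\leq\zeta(\bf)$ (leaves are vertices), and multiplying the defining inequality by $\mu_0$ and using $\lambda(\bf)=n$ converts it into $|\mu_0\zeta(\bf)-\lambda(\bf)|>\mu_0\zeta(\bf)^{3/4}$. Decomposing over the value of $p=\zeta(\bf)$ and using Proposition \ref{prop:RW} then yields
\begin{align*}
\Pmuj[\alpha_n]&\leq\sum_{p\geq n}\Pmuj\!\left[\zeta(\bf)=p,\,|\lambda(\bf)-\mu_0 p|>\mu_0 p^{3/4}\right]\\
&=\sum_{p\geq n}\P\!\left[\zeta_j=p,\,|\Lambda(p)-\mu_0 p|>\mu_0 p^{3/4}\right]\\
&\leq\sum_{p\geq n}\P\!\left[|\Lambda(p)-\mu_0 p|>\mu_0 p^{3/4}\right].
\end{align*}
Exactly as in part (i) (with $\delta=\mu_0$), each summand is at most $4e^{-cp^{1/2}}$ for some fixed $c>0$ and all $p$ large enough, so Remark \ref{rem:oe} immediately gives $\Pmuj[\alpha_n]=oe_{1/2}(n)$, uniformly in $j$.

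The only point requiring real attention is the event rewriting in (ii): one must use the constraint $\lambda(\bf)=n$ to convert a deviation of $\zeta(\bf)$ away from $n/\mu_0$ into a deviation of $\lambda(\bf)$ away from $\mu_0\zeta(\bf)$, since the latter is the form that the Bernoulli concentration estimate of Lemma \ref{lem:phi} controls. Once this reformulation is done and the decomposition over $p=\zeta(\bf)$ is in place, the rest is routine, and the $j$-independence of all bounds delivers uniformity automatically.
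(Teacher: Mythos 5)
Your proof is correct and follows essentially the same route as the paper: both parts reduce to the Bernoulli concentration bound of Lemma \ref{lem:phi} applied to the number of $-1$ jumps of the random walk, via the representation of Proposition \ref{prop:RW}. The only difference is cosmetic: for (ii) the paper splits the event into the two one-sided deviations and treats them separately (one by invoking part (i) at each value of $\zeta(\bf)$, the other via the identity \eqref{eq:kemperman}), whereas you handle both at once by rewriting the event as $|\lambda(\bf)-\mu_0\zeta(\bf)|>\mu_0\zeta(\bf)^{3/4}$ and summing over $p=\zeta(\bf)\geq n$ --- a mild streamlining that changes nothing essential.
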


\begin{proof}The proof of assertion (i) is very similar to that of Lemma \ref {lem:GdesDev}. The only difference is the fact that we are now considering a forest, but we can still use Proposition \ref {prop:RW}. We leave details to the reader.

Let us turn to the proof of the second assertion, which is a bit more
technical. First write:
\begin{eqnarray*}&&\Pmuj\left[\lambda(\bf)=n,\,
\left|\zeta(\bf)-\frac{n}{\mu_0}\right|>{\zeta(\bf)}^{3/4}\right]\\
 && \qquad\qquad\qquad =\Pmuj\left[\lambda(\bf)=n ,\,
\zeta(\bf)>\frac{n}{\mu_0}+{\zeta(\bf)}^{3/4}\right]+\Pmuj\left[\lambda(\bf)=n
,\,\zeta(\bf)<\frac{n}{\mu_0}-{\zeta(\bf)}^{3/4}\right].\end{eqnarray*}
Denote the first term on the right-hand side by $I_n$ and the second
term by $J_n$. We first deal with $I_n$ and show that
$I_n=oe_{1/2}(n)$. We observe that:
\begin{eqnarray*} I_n &\leq& \sum_{k=n}^{\infty} \Pmuj\left[\lambda(\bf)<\mu_0 k-\mu_0 k^{3/4}, \,
\zeta(\bf)=k \right].\end{eqnarray*}Assertion (i) implies
that $\Pmuj\left[\lambda(\bf)<\mu_0 k-\mu_0 k^{3/4}, \, \zeta(\bf)=k
\right]=oe_{1/2}(k)$, and this entails that $I_n=oe_{1/2}(n)$.

We complete the proof by showing that $J_n=oe_{1/2}(n)$. Write: $$J_n \leq  \sum_{k=n} ^{\fl{n/\mu_0}}\Pmuj\left[\zeta(\bf)=k ,\,
\frac{\lambda(\bf)}{k}-\mu_0 > \frac{\mu_0}{k^{1/4}}\right].$$
By  Lemma \ref{lem:phi} (ii), we have
$\phi^\ast\left(\mu_0 + {\mu_0}{k^{-1/4}}\right)>c_2
k^{-1/2}$ for some $c_2>0$ and for every $k \geq n$, provided that $n$ is sufficiently large.
Then, using Proposition
\ref{prop:K} and Lemma \ref{lem:phi} (i):
$$J_n \leq \sum_{k=n}^{\fl{n/\mu_0}}
\frac{j}{k} \P \left[ \d
 \frac{1}{k}
\sum_{p=1}^{k}1_{\{X_p=-1\}} > \mu_0+\frac{\mu_0}{k^{1/4}}\right] \leq \sum_{k=n}^{\fl{n/\mu_0}} 2 \exp(-c_2 k^{1/2})
$$
which is $oe_{1/2}(n)$.
 \end{proof}

\section{Estimate for the probability of having $n$ leaves}

In this section, we give a precise asymptotic estimate for the probability that
a $\GW_\mu$ tree has $n$ leaves. This result is of
independent interest, but will also be useful when proving an
invariance principle for $\GW_\mu$ trees conditioned on having $n$
leaves.

Recall that $\mu$ is a probability distribution on $ \N$ satisfying hypothesis $ (H _ \theta)$ with $ \theta \in (1,2]$. Recall also that $h$ is the slowly varying function that was
introduced just before Lemma \ref{lem:locallimit}.

\begin{thm}\label{thm:ltn}Let supp$( \mu)$ be the support of $ \mu$ and let $d \geq 1$ be the largest integer such that supp$(\mu)\backslash\{0\}$ is
contained in $b+d \Z$ for some $b \in \N$. Then choose $b$ minimal such that the preceding property holds.
\begin{enumerate}
\item[(i)] There exists an integer $N>0$ such that the following holds. For every $n \geq N$, $\Prmu{\lt =
n+1} > 0$ if, and only if, $n$ is a multiple of $\gcd(b-1,d)$.
\item[(ii)] We have:
$$\Prmu{\lt = n+1} \quad \mathop{\sim}_{n \rightarrow \infty} \quad\mu_0^{1/\theta} p_1(0) \frac{\gcd(b-1,d)}{h(n) n^{ 1/\theta+1}},$$
when $n$ tends to infinity in the set of multiples of $\gcd(b-1,d)$. Here we recall that $p_1$ is the continuous density of the law of $X_1$, where $(X_t)_ {t \geq 0}$ is the spectrally
positive Lévy process with Laplace exponent $ \E[ \exp(- \lambda
X_t)]= \exp(t \lambda ^ \theta)$.  
\end{enumerate}
In particular, when the second moment of $\mu$ is finite :
$$\Prmu{\lt = n+1} \quad\mathop{\sim}_{n \rightarrow \infty} \quad\sqrt{\frac{\mu_0}{2\pi
\sigma^2}}\quad \frac{\gcd(b-1,d)}{n^{3/2}},$$ when $n$ tends to infinity in the set of multiples of $ \gcd(b-1,d)$.
\end{thm}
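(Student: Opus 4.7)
The starting point is to sum the Kemperman-type identity of Proposition \ref{prop:K} (with $j=1$) over $p$ to obtain
\[
\Prmu{\lt = n+1} \;=\; \sum_{p \geq n+1} \frac{1}{p}\,\P[S_p = n+1]\,\P[W'_{p-n-1} = n].
\]
I would identify the dominant range of $p$ near $p^* := (n+1)/\mu_0$, apply the local limit theorem to both factors, and then evaluate the resulting sum.

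For part (i), the nonnegative jumps of the Lukasiewicz path lie in $(b-1)+d\Z$, so a tree with $n+1$ leaves (hence $m$ nonnegative jumps summing to $n$, as the Lukasiewicz path ends at $-1$) requires $n \in m(b-1) + d\Z$ for some $m \geq 0$; as $m$ varies, the attainable values of $n$ cover exactly $\gcd(b-1,d)\,\Z$, giving the necessary condition. Conversely, for sufficiently large $n$ divisible by $\gcd(b-1,d)$, one exhibits an explicit tree of positive $\Prmu$-probability having $n+1$ leaves via a Frobenius-type construction.

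For part (ii), Lemma \ref{lem:GdesDev2}(ii) (applied with $j=1$) reduces the sum to $|p - p^*| \leq p^{3/4}$ up to an $oe_{1/2}(n)$ remainder. In this range, $1/p = (\mu_0/(n+1))(1+o(1))$ uniformly. Applying Theorem \ref{thm:locallimit} to the walk $W'$, whose jump support lies in $(b-1)+d\Z$ with centered span $d$, yields $\P[W'_m = n] \sim (d/B'_m)\,p_1(0)$ for those $m$ with $n \equiv m(b-1) \pmod d$ and zero otherwise; the density of contributing $m$'s is $\gcd(b-1,d)/d$, producing an effective prefactor $\gcd(b-1,d)/B'_m$ upon summation. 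The relation $\P[W'_1 > x] = \P[W_1 > x]/(1-\mu_0)$ combined with the slow variation of $h$ and $m^* := p^* - n - 1 \sim (1-\mu_0)n/\mu_0$ gives $B'_{m^*} \sim \mu_0^{-1/\theta}\,h(n)\,n^{1/\theta}$. The final ingredient is the exact identity $\sum_{p \geq n+1} \P[S_p = n+1] = 1/\mu_0$, valid because the sojourn time of the nondecreasing Bernoulli walk $(S_p)$ at any level is Geometric$(\mu_0)$. Combining these, the factors of $\mu_0$ from $1/p$ and of $1/\mu_0$ from the renewal identity cancel, yielding
\[
\Prmu{\lt = n+1} \;\sim\; \frac{\gcd(b-1,d)\,p_1(0)}{(n+1)\,B'_{m^*}} \;=\; \frac{\mu_0^{1/\theta}\,p_1(0)\,\gcd(b-1,d)}{h(n)\,n^{1/\theta+1}}.
\]

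\textbf{The main obstacle} is justifying the uniform replacement $p_1((n - m\E[W'_1])/B'_m) \to p_1(0)$. The argument of $p_1$ equals $-(1 + (p-p^*)\mu_0/(1-\mu_0))/B'_m$, which is $O(\sqrt{n}/B'_m)$ on the effective support $|p - p^*| = O(\sqrt{n})$ of $\P[S_p = n+1]$. This tends to zero uniformly when $\theta < 2$ (since $B'_m$ is of order $n^{1/\theta}h(n)$, dominating $\sqrt n$) and also when $\theta = 2$ with $\sigma^2 = \infty$ (since then $h$ blows up), so the above plan goes through directly. In the remaining case $\theta = 2$ with $\sigma^2 < \infty$, however, $B'_m \asymp \sqrt{n}$ and the pointwise replacement fails; in that regime one must instead invoke the joint bivariate LLT for the lattice-valued walk $(W_p, \Lambda(p))$, whose covariance matrix $\Sigma$ has $\det \Sigma = \mu_0[\sigma^2(1-\mu_0) - \mu_0]$, and perform a direct Gaussian integration (using the identity $\det\Sigma \cdot (\Sigma^{-1})_{22} = \sigma^2$) to recover $\sqrt{\mu_0/(2\pi\sigma^2)}\,n^{-3/2}$. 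This matches the general formula evaluated at $h(n) = \sigma/\sqrt{2}$ and $p_1(0) = 1/\sqrt{4\pi}$, giving the ``In particular\ldots'' corollary at the end of the theorem.
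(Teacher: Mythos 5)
Your proposal is correct in substance and it correctly isolates the one genuinely delicate point, but it takes a different route from the paper at the two places where the computation is nontrivial. The paper, after the same reduction via Proposition \ref{prop:K} and Lemma \ref{lem:GdesDev2}(ii), rewrites the sum over $p$ as an integral in the variable $u$ with $p=\lfloor n/\mu_0+u\sqrt n\rfloor$ and applies the local limit theorem to \emph{both} factors, obtaining by dominated convergence a limit of the form $\mu_0\int F(u)\,\frac{1}{\sqrt{2\pi(1-\mu_0)}}e^{-\mu_0^2u^2/(2(1-\mu_0))}\,du$; the whole difficulty of the finite-variance case is that there $F(u)=\sqrt{\mu_0/(1-\mu_0)}\,p_1\bigl(\tfrac{\sqrt2}{\sigma'}(\tfrac{\mu_0}{1-\mu_0})^{3/2}u\bigr)$ is \emph{not} constant, and the paper evaluates the resulting Gaussian convolution explicitly and then substitutes $\sigma'^2=(\sigma^2-\mu_0)/(1-\mu_0)-(\mu_0/(1-\mu_0))^2$ to land on $\sqrt{\mu_0/(2\pi\sigma^2)}$. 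You instead (a) dispose of the Bernoulli factor by the exact renewal identity $\sum_p\P[S_p=n+1]=1/\mu_0$, which elegantly replaces the Gaussian integration whenever the $W'$-factor is asymptotically constant on the window $|p-p^*|=O(\sqrt n)$ — exactly the cases $\theta<2$, or $\theta=2$ with $\sigma^2=\infty$ — and (b) in the remaining case you switch to a bivariate local limit theorem for $(W_p,\Lambda(p))$; your covariance matrix and the identity $\det\Sigma\cdot(\Sigma^{-1})_{22}=\sigma^2$ are correct and the Gaussian integration does reproduce $\sqrt{\mu_0/(2\pi\sigma^2)}\,n^{-3/2}$, avoiding the paper's variance algebra. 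The trade-off is that your route (b) imports a joint lattice analysis of the vector $(W_1,1_{\{W_1=-1\}})$ that the paper never needs: when $\mathrm{supp}(\mu)\setminus\{0\}$ is lattice the pair lives on a proper sublattice of $\Z^2$ and the $\gcd(b-1,d)$ factor must be extracted from its index, which is genuinely technical. Two smaller points to make explicit if you write this up: the renewal identity must be combined with an equidistribution statement for $\P[S_p=n+1]$ over the residue classes $p\bmod d/\gcd(b-1,d)$ on which $\P[W'_{p-n-1}=n]$ is nonzero (your ``density of contributing $m$'s'' step), and your asymptotics $B'_{m^*}\sim\mu_0^{-1/\theta}h(n)n^{1/\theta}$ relies on Lemma \ref{lem:hhp}, which the paper states only for $\sigma^2=\infty$ — consistent with the fact that you treat the finite-variance case separately, where that formula would give the wrong constant.
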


Note that supp$(\mu)\backslash\{0\}$ is non-lattice if and only if
$d=1$. It is crucial to keep in mind that even if $\mu$ is
aperiodic, supp$(\mu)\backslash\{0\}$ can be lattice (for example if
the support of $\mu$ is $\{0,4,7\}$).

\begin{rem}In the case where $\mu$ has finite variance, Theorem \ref{thm:ltn} is
a consequence of results contained in \cite{Minami}.\end{rem}

Before giving the proof of Theorem \ref{thm:ltn}, let us mention a useful
consequence.
\begin{cor}\label{cor:Lambda}
 Fix $\delta >0$ and $\eta \in (0,1)$. We
have:
$$\Pmu\left[ \sup_{\eta\leq t \leq 1} \left| \frac{\Lambda_\tau(\zt t)}{ \zt t} - \mu_0\right|\geq
\frac{\delta}{n^{1/4}} \,| \,  \lt=n \right] = oe_{1/2}(n),$$
when $n-1$ tends to infinity in the set of multiples of $ \gcd(b-1,d)$.
\end{cor}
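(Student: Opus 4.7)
The plan is to mirror the strategy used in Corollary \ref{cor:GD2mieux}, but with the conditioning on $\lt=n$ instead of on $\zt\geq n$. The key observation allowing a clean reduction to Lemma \ref{lem:GdesDev} (ii) is that the event $\{\lt=n\}$ forces $\zt\geq n$, and the threshold $\delta/n^{1/4}$ is \emph{larger} than $\delta/\zt^{1/4}$ on that event.

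More precisely, set $A_n=\{\sup_{\eta\leq t\leq 1}|\Lambda_\tau(\zt t)/(\zt t)-\mu_0|\geq \delta/n^{1/4}\}$ and for each integer $k\geq 1$ set $A'_k=\{\sup_{\eta\leq t\leq 1}|\Lambda_\tau(kt)/(kt)-\mu_0|\geq \delta/k^{1/4}\}$. On $\{\zt=k\}$ with $k\geq n$ we have $\delta/n^{1/4}\geq \delta/k^{1/4}$, so
$$A_n\cap\{\zt=k\}\;\subseteq\; A'_k\cap\{\zt=k\}.$$
Thus I would bound, using that $\{\lt=n\}\subseteq\{\zt\geq n\}$:
$$\Pmu[A_n,\,\lt=n]\;\leq\;\sum_{k\geq n}\Pmu[A'_k,\,\zt=k]\;=\;\sum_{k\geq n}\Pmu[A'_k\,|\,\zt=k]\,\Pmu[\zt=k].$$
By Lemma \ref{lem:GdesDev} (ii), $\Pmu[A'_k\,|\,\zt=k]\leq Ce^{-ck^{1/2}}$ for constants $C,c>0$ independent of $k$ (with the convention that the term is $0$ when $\Pmu[\zt=k]=0$). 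Summing gives
$$\Pmu[A_n,\,\lt=n]\;\leq\;Ce^{-cn^{1/2}}\sum_{k\geq n}\Pmu[\zt=k]\;\leq\;Ce^{-cn^{1/2}},$$
so $\Pmu[A_n,\lt=n]=oe_{1/2}(n)$.

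It then remains to divide by $\Pmu[\lt=n]$. By Theorem \ref{thm:ltn}, when $n-1$ ranges over the relevant set of multiples of $\gcd(b-1,d)$, one has $\Pmu[\lt=n]\sim \mu_0^{1/\theta}p_1(0)\gcd(b-1,d)\,/\,(h(n-1)(n-1)^{1/\theta+1})$; combining this with Proposition \ref{prop:slow} (i), the denominator is bounded below by $1/n^{1/\theta+1+\varepsilon}$ for any fixed $\varepsilon>0$ and $n$ large. Multiplying $Ce^{-cn^{1/2}}$ by the polynomial factor $n^{1/\theta+1+\varepsilon}$ still yields a quantity that is $oe_{1/2}(n)$, which gives the claim.

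There is essentially no hard step: once the containment $A_n\cap\{\zt=k\}\subseteq A'_k\cap\{\zt=k\}$ for $k\geq n$ is noted, the whole argument reduces to Lemma \ref{lem:GdesDev} (ii), summation, and the polynomial lower bound for $\Pmu[\lt=n]$ provided by Theorem \ref{thm:ltn}. The only subtlety to watch is that Theorem \ref{thm:ltn} only guarantees this lower bound along the arithmetic progression of admissible $n$, which is exactly the range over which the corollary is stated.
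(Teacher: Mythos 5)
Your proof is correct and follows essentially the same route as the paper: the paper deduces the corollary from Corollary \ref{cor:GD2mieux} (whose proof is exactly your decomposition over $\{\zt=k\}$, $k\geq n$, via Lemma \ref{lem:GdesDev} (ii)) together with the polynomial lower bound on $\Pmu[\lt=n]$ from Theorem \ref{thm:ltn}. You have merely unwound the intermediate corollary inline; the substance, including the inclusion $\{\lt=n\}\subseteq\{\zt\geq n\}$ and the division by the polynomially decaying probability, is identical.
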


This bound is an immediate consequence of Corollary \ref {cor:GD2mieux} once we know that $ \Prmu { \lt=n}$ decays like a power of $n$.

\subsection{The Non-Lattice case}
\label {sec:nonlatice}

We consider a random variable $Y$ on $\N$ with distribution:
\begin{equation}\label{eq:Y}\P[Y=i]= \frac{1}{1-\mu_0} \mu(i+1)=\frac{1}{1-\mu_0} \nu(i), \qquad i
\geq 0.\end{equation}We will first establish Theorem \ref{thm:ltn} when $Y$ is
non-lattice, that is $b=0$ and $d=1$ in the notation of Theorem
\ref{thm:ltn}.

 In agreement with the notation of Proposition \ref {prop:K}, we consider the random walk $W'$ defined as $W$ conditioned on having nonnegative jumps. In particular, $W'_n$ is the sum of $n$ independent copies of the random variable $Y$, which is in the domain of attraction of a stable law of index $ \theta$. Indeed, when $ \sigma^2 = \infty$, this follows from the  characterization of the domain of attraction of stable
laws (see \cite[Theorem 2.6.1]{IL}). When $ \sigma^2 < \infty$, formula
(\ref{eq:Y}) shows that $Y$ has a finite second moment as well.

Consequently, if we write $B'_n$ for the quantity corresponding to $a_n$ in Theorem \ref{thm:locallimit} when $W$ is replaced by $W'$, we have, by Theorem \ref{thm:locallimit} (iii):
\begin{equation}\label{eq:ll}\lim_{n \rightarrow \infty} \sup_{k \in
\Z} \left| B'_n \P[W'_n=k]-p_1\left( \frac{k-n
\E[Y]}{B'_n}\right) \right|=0.\end{equation} Moreover, there exists
a slowly varying function $h': \R_+ \rightarrow \R_+$ such that
$B'_n=h'(n) n^{1/\theta}$, and $h'(n) \rightarrow \infty$ as $ n \rightarrow \infty$ when both $ \sigma^2 = \infty$ and $ \theta=2$. In the case where the second moment of
$\mu$ is finite, we have $ B'_n = \sigma' \sqrt{n/2}$ where $\sp$ is the
variance of $Y$. Note also that $\E[Y]= {\mu_0}/({1-\mu_0})$.

The following lemma establishes an important link between $h$ and
$h'$.

\begin{lem}\label{lem:hhp}If $ \sigma^2 = \infty$ we have
$ \d\lim_{n \rightarrow \infty} {B'_n}/{B_n}=\lim_{n \rightarrow \infty}
{h'(n)}/{h(n)}=(1-\mu_0)^{-1/\theta}.$
\end{lem}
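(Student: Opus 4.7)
The plan is to rely directly on the explicit formulas for $B_n$ and $B'_n$ given by Theorem \ref{thm:locallimit}, and to treat separately the cases $\theta\in(1,2)$ and $\theta=2$ with $\sigma^2=\infty$. The key observation in both cases is that since $W_1\geq -1$ and $Y$ has law $\eta(i)=\mu(i+1)/(1-\mu_0)$ for $i\geq 0$, the positive tail of $W_1$ is exactly $(1-\mu_0)$ times that of $Y$.

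For $\theta\in(1,2)$: for every $x\geq 0$ one has $\P[W_1>x]=(1-\mu_0)\P[Y>x]$. Setting $\tilde a_s=\inf\{x\geq 0: \P[Y>x]\leq 1/s\}$ for real $s>0$, this immediately rewrites the formulas of Theorem \ref{thm:locallimit} as
\[
B_n=(\Gamma(1-\theta))^{1/\theta}\,\tilde a_{n(1-\mu_0)},\qquad B'_n=(\Gamma(1-\theta))^{1/\theta}\,\tilde a_n.
\]
Since $\P[Y>x]$ is regularly varying of index $-\theta$, its generalized inverse $\tilde a_s$ is regularly varying of index $1/\theta$; hence $\tilde a_{n(1-\mu_0)}/\tilde a_n\to(1-\mu_0)^{1/\theta}$, and therefore $B'_n/B_n\to(1-\mu_0)^{-1/\theta}$.

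For $\theta=2$ with $\sigma^2=\infty$: because $W_1=-1$ with probability $\mu_0$ and $W_1=k$ with probability $(1-\mu_0)\eta(k)$ for $k\geq 0$, a direct computation gives $K(x)=\mu_0+(1-\mu_0)K_Y(x)$ for $x\geq 1$, where $K_Y(x)=\E[Y^2 1_{Y\leq x}]$. As $Y$ also has infinite variance, $K_Y(x)\to\infty$, whence $K(x)\sim(1-\mu_0)K_Y(x)$. Writing $z_n$ (resp.\ $z'_n$) for the sup appearing in the definition of $B_n$ (resp.\ $B'_n$), a sandwich argument based on this equivalence together with the fact that $z'_s$ is regularly varying of index $1/2$ yields $z_n/z'_n\to(1-\mu_0)^{1/2}$. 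Using then that $K_Y$ is slowly varying,
\[
\frac{B_n^{\,2}}{{B'_n}^{\,2}}=\frac{K(z_n)}{K_Y(z'_n)}=\frac{K(z_n)}{K_Y(z_n)}\cdot\frac{K_Y(z_n)}{K_Y(z'_n)}\longrightarrow (1-\mu_0)\cdot 1,
\]
which gives $B'_n/B_n\to(1-\mu_0)^{-1/2}=(1-\mu_0)^{-1/\theta}$.

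Finally, since $B_n=h(n)n^{1/\theta}$ and $B'_n=h'(n)n^{1/\theta}$ by Theorem \ref{thm:locallimit}(ii), the ratio $h'(n)/h(n)=B'_n/B_n$ inherits the same limit. The main obstacle is the $\theta=2$ case: there the quantities $z_n$, $z'_n$ are defined only implicitly through slowly varying functions, so one needs a sandwich argument together with the regular variation of the inverse to control their ratio. The $\theta<2$ case, by contrast, reduces to a one-line comparison of tails.
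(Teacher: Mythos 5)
Your proof is correct and follows essentially the same route as the paper: for $\theta<2$ one compares the tails $\P[W_1>x]=(1-\mu_0)\P[Y>x]$ and transfers the factor $1-\mu_0$ through the (regularly varying) generalized inverse, and for $\theta=2$ one makes the analogous comparison of truncated second moments. The only difference is that you actually carry out the $\theta=2$, $\sigma^2=\infty$ case, which the paper declares "similar" and leaves to the reader; your sandwich argument there is the intended one.
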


\begin{proof}First assume that $ \theta < 2$.
Since $\P[Y \geq x]= \frac{1}{1-\mu_0}\P[W_1 \geq x]$ for $x \geq
0$, by Theorem \ref {thm:locallimit} (i),  we have for $n$ large
enough:
$$B'_n = \Gamma(1-\theta)^{1/\theta} \inf \left\{ x \geq 0; \, \P[Y
\geq x] \leq
\frac{1}{n}\right\}=\Gamma(1-\theta)^{1/\theta} \inf \left\{ x \geq 0; \, \P[W_1 \geq
x] \leq
\frac{1-\mu_0}{n}\right\}.
$$
Thus $ B_{\fl{{n}/(1-\mu_0)}}\leq B'_n
\leq B_{\left\lceil{n}/(1-\mu_0) \right\rceil}$, and the conclusion easily follows. The proof in the case $ \theta = 2$ is similar and is left to the reader.
\end{proof}

We will use the following refinement of the local limit theorem (see \cite[Chapter 7, P10]{Spitzer} for a proof).

\begin{thm}[Strong Local Limit Theorem]\label{thm:strongLL}Let $Z=(Z_n)_{n \geq 0}$ be a random walk on $\mathbb{Z}$ with jump distribution
$\rho$ started from $0$, where $\rho$ is a non-lattice probability
distribution on $\Z$. Assume that the second moment of $\rho$ is
finite. Denote the mean of $\rho$ by $m$ and its variance by $ \widetilde{ \sigma}^2$. Set $ \widetilde{a}_n= \widetilde{ \sigma} \sqrt {n/2}$. Then:
$$\lim_{n \rightarrow \infty} \sup_{x \in \Z} \left( 1 \vee
\frac{(x-mn)^2}{n}\right) \left| \widetilde{a}_n
\P[Z_n=x]-p_1\left( \frac{x-m
n}{ \widetilde{a}_n}\right) \right|=0.$$ \end{thm}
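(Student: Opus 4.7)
The statement is the classical strong local limit theorem for lattice random walks, and my plan follows the standard Fourier-analytic route. First, I would center the walk by writing $y = x - mn$ and introducing the characteristic function $\psi(t) := \E[e^{it(Z_1 - m)}]$. Discrete Fourier inversion gives
$$\P[Z_n - mn = y] \;=\; \frac{1}{2\pi}\int_{-\pi}^{\pi} e^{-ity}\psi(t)^n\, dt,$$
and since the Fourier transform of $p_1(x) = (4\pi)^{-1/2} e^{-x^2/4}$ is $e^{-t^2}$, a rescaling by $\widetilde{a}_n$ yields
$$p_1(y/\widetilde{a}_n) \;=\; \frac{\widetilde{a}_n}{2\pi}\int_{\R} e^{-ity}\, e^{-\widetilde{a}_n^2 t^2}\, dt.$$
Setting $D_n(y) := \widetilde{a}_n\P[Z_n - mn = y] - p_1(y/\widetilde{a}_n)$, subtraction gives
$$D_n(y) \;=\; \frac{\widetilde{a}_n}{2\pi}\int_{-\pi}^{\pi} e^{-ity}\bigl[\psi(t)^n - e^{-\widetilde{a}_n^2 t^2}\bigr]\, dt \;-\; \frac{\widetilde{a}_n}{2\pi}\int_{|t|>\pi} e^{-ity}\, e^{-\widetilde{a}_n^2 t^2}\, dt,$$
the tail integral being super-exponentially small since $\widetilde{a}_n^2 = \widetilde{\sigma}^2 n/2 \to \infty$.

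The uniform bound $\sup_y |D_n(y)| \to 0$ is recovered by partitioning at $|t| = \delta$. On $|t| \leq \delta$, the Taylor expansion $\log\psi(t) = -\widetilde{\sigma}^2 t^2/2 + o(t^2)$ combined with the substitution $t = s/\widetilde{a}_n$ shows $\psi(s/\widetilde{a}_n)^n \to e^{-s^2}$ with $L^1$ control; on $\delta \leq |t| \leq \pi$, the non-lattice hypothesis yields $\sup_{\delta \leq |t| \leq \pi} |\psi(t)| < 1$, so $|\psi(t)|^n$ decays geometrically. This handles the region $y^2 \leq n$, where $(1 \vee y^2/n) = 1$.

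To capture the extra factor $y^2/n$ in the remaining regime, I would use the identity $y^2 e^{-ity} = -\partial_t^2 e^{-ity}$ and integrate by parts twice in both Fourier integrals defining $D_n(y)$. This produces
$$y^2 D_n(y) \;=\; -\frac{\widetilde{a}_n}{2\pi}\int_{-\pi}^{\pi} e^{-ity}\Bigl[(\psi^n)''(t) - (e^{-\widetilde{a}_n^2 t^2})''\Bigr]\, dt \;+\; R_n(y),$$
where $R_n(y)$ collects boundary contributions at $t = \pm\pi$ of the form $\psi(\pm\pi)^n$ and $(\psi^n)'(\pm\pi)$; by the non-lattice hypothesis $|\psi(\pm\pi)| < 1$, so $\sup_y|R_n(y)|$ decays exponentially. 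Since $\rho$ has finite second moment, $\psi \in C^2$ with $\psi''(0) = -\widetilde{\sigma}^2$, and a direct computation shows that $(\psi^n)''(0)$ matches $(e^{-\widetilde{a}_n^2 t^2})''(0) = -2\widetilde{a}_n^2$. A refined Taylor development in the rescaled variable $s = t\widetilde{a}_n$ then gives pointwise convergence of the bracket to $0$ on bounded sets, and after the substitution $t = s/\widetilde{a}_n$ the $L^1$ norm of the bracket over $|t| \leq \delta$ is $o(\widetilde{a}_n)$. On $\delta \leq |t| \leq \pi$, geometric decay of $|\psi(t)|^{n-2}$ dominates. Combining, $\sup_y |y^2 D_n(y)| \leq \widetilde{a}_n \cdot o(\widetilde{a}_n) + o(1) = o(n)$, which is exactly what is needed.

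The main obstacle is the two-derivatives $L^1$ estimate: one must show that the residual term in $\psi(t)^n = e^{-\widetilde{a}_n^2 t^2}(1 + o(1))$ remains small after twofold differentiation in $t$, which requires carefully exploiting the continuity of $\psi''$ at the origin provided by the finite second moment assumption. This refined estimate is precisely the content of the argument in \cite[Chapter 7, P10]{Spitzer}.
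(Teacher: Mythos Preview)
The paper does not prove this theorem at all: it simply states the result and cites \cite[Chapter 7, P10]{Spitzer} for a proof. Your proposal goes further by sketching the standard Fourier-analytic argument (inversion, split at $|t|=\delta$, and two integrations by parts to gain the factor $y^2$), which is precisely the route taken in Spitzer, and you yourself defer the delicate $L^1$ estimate after differentiation to that same reference---so your proposal is consistent with, and strictly more detailed than, what the paper provides.
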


\begin{proof}[Proof of Theorem \ref{thm:ltn} when $Y$ is non-lattice] We first show that $h'(n) n^{1/\theta+1} \Pmu[ \lt=n]$ converges to
a positive real number.
 Fix $\e \in (0,1/2)$ and write:
$$ \Pmu[ \lt=n] = \Pmu\left[ \lt=n,
\, (1-\e) \frac{n}{\mu_0} \leq \zt \leq
(1+\e) \frac{n}{\mu_0}\right] +  \Pmu\left[ \lt=n,  \,
\left|\frac{\mu_0\zt}{n}-1\right|> \e \right].$$ By
Proposition \ref{prop:slow}, there exists $C>0$ such that $h'(n) \leq C
n$ for every positive integer $n$. Moreover, for $n$ large
enough, for every $x>0$, the property $|x \mu_0/n-1| \geq \e$ implies $|x-n/\mu_0| \geq
x^{3/4}$. Consequently:
$$h'(n)n^{1/\theta+1}  \Pmu\left[ \lt=n,  \, \left|\frac{\mu_0\zt}{n}-1\right|> \e\right] \leq
Cn^{1/\theta+2}  \Pmu\left[ \lt=n,  \,
\left|\zeta(\tau)-\frac{n}{\mu_0}\right| \geq \zt^{3/4} \right],$$
which is $oe_{1/2}(n)$ by Lemma \ref{lem:GdesDev2} (ii). It is thus
sufficient to show that: \begin{equation}\label{eq:ecrire}h'(n)
n^{1/\theta+1} \Pmu\left[ \lt=n, \, (1-\e) \frac{n}{\mu_0} \leq \zt
\leq (1+\e) \frac{n}{\mu_0}\right]\end{equation}converges to a
positive real number.

In the following, $S_p$ will denote the sum of $p$ independent
Bernoulli variables of parameter $\mu_0$. Note that $S_1$ is
non-lattice. The key idea is to
write the quantity appearing in (\ref{eq:ecrire}) as a sum, then rewrite it as an
integral and finally use the dominated convergence theorem. For $ x \in \R$, denote the smallest integer greater than or equal to $x$  by $ \ce {x}$. To simplify notation, we write $ \mathcal{O}(1)$ for a bounded sequence indexed by $n$ and $o(1)$ for a sequence indexed by $n$ which tends to $0$. Using Proposition \ref{prop:K}, we write:
\begin{eqnarray}
&&\d  h'(n) n^{1/\theta+1} \Pmu\left[ \lt=n,  \, (1-\e)
\frac{n}{\mu_0} \leq \zt \leq
(1+\e) \frac{n}{\mu_0}\right] \notag\\
&&\qquad = \quad \d h'(n) n^{1/\theta+1 }
\sum_{p= \ce{(1-\e)n/\mu_0}}^{\fl{(1+\e)n/\mu_0}} \frac{1}{p} \,
\P[S_p=n] \, \P[W'_{p-n}=n-1] \notag\\
&&\qquad = \quad \d \int_{- \frac{\e}{\mu_0} {n}+ \mathcal{O}(1)}
^{\frac{\e}{\mu_0} {n}+ \mathcal{O}(1)} dx \, \frac{h'(n)
n^{1/\theta+1}}{\fl{n/\mu_0+x}} \, \P[S_{\fl{n/\mu_0+x}}=n] \,
\P[W'_{\fl{n/\mu_0+x}-n}=n-1] \notag\\
&&\qquad = \quad \d \int_{- \frac{\e}{\mu_0} \sqrt{n}+o(1)}
^{\frac{\e}{\mu_0} \sqrt{n}+o(1)} du \, \frac{ \sqrt {n} h'(n) n^{1/\theta+1}}{\fl{n/\mu_0+u \sqrt{n}}}  \P[S_{\fl{n/\mu_0+u \sqrt{n}}}=n] \,
\P[W'_{\fl{n/\mu_0+u \sqrt{n}}-n}=n-1] \label{eq:somint}	
\end{eqnarray}
Using the case $ \theta=2$ of Theorem \ref{thm:locallimit} (iii), for fixed $u \in \R$, one sees
that:
\begin{eqnarray*} \lim_{n \rightarrow \infty }\sqrt{n} \P[S_{\fl{n/\mu_0+u
\sqrt{n}}}=n]&=&\frac{1}{\sqrt{2 \pi (1-\mu_0)}} e^{- \frac{\mu_0^2
}{2(1-\mu_0)} u^2}.\end{eqnarray*}

We now claim that there exists a bounded function $F: \R
\rightarrow \R_+$ such that: \begin{equation}
\label{eq:b}\lim_{n \rightarrow \infty }h'(n)
n^{1/\theta} \P[W'_{[n/\mu_0+u \sqrt{n}]-n}=n-1]=F(u)
\end{equation}for
every fixed $u \in \R$. We distinguish two cases. When $ \sigma^2 = \infty$, we have by (\ref{eq:ll}):
\begin{eqnarray*} && \lim_{n \rightarrow \infty }h'(n) n^{1/\theta} \P[W'_{\fl{\frac{n}{\mu_0}+u
\sqrt{n}}-n}=n-1]  \\
&& \qquad \qquad \qquad \qquad \qquad \qquad=  \left(\frac{\mu_0}{1-\mu_0}\right)^{\frac{1}{\theta}} \lim_{n \rightarrow \infty }
p_1\left(\frac{n-1-\frac{\mu_0}{1-\mu_0} \left( \fl{\frac{n}{\mu_0}+u
\sqrt{n}}-n \right) }{B'_{\fl{\frac{n}{\mu_0}+u \sqrt{n}}-n}}\right)\\
&& \qquad \qquad \qquad \qquad \qquad \qquad= \left(\frac{\mu_0}{1-\mu_0}\right)^{\frac{1}{\theta}}
p_1(0).
\end{eqnarray*}
In the case $ \theta=2$, we use the property that $h'(n) \rightarrow \infty$ as $ n \rightarrow \infty$. When  $ \sigma^2 < \infty$, (\ref{eq:ll}) gives: \begin{eqnarray*}\lim_{n
\rightarrow \infty } \sigma' \sqrt{n/2} \, \P[W'_{\fl{\frac{n}{\mu_0}+u
\sqrt{n}}-n}=n-1]  &=& \sqrt{ \frac{ \mu_0}{1-\mu_0}} \lim_{n \rightarrow
\infty }p_{1}\left(
\frac{n-1-\frac{\mu_0}{1-\mu_0} \left( \fl{\frac{n}{\mu_0}+u \sqrt{n}}-n \right)
}{ \sigma' \sqrt{\fl{\frac{n}{\mu_0}+u
\sqrt{n}}-n} /\sqrt {2}}\right)\\
& =&
\sqrt{ \frac{ \mu_0}{1-\mu_0}} \,
{p}_{1}\left(  \frac{ \sqrt {2}}{ \sigma'} \cdot \left(\frac{\mu_0}{1-\mu_0}\right)^{3/2}u\right).\end{eqnarray*}
In both cases, we have obtained our claim \eqref{eq:b}.

Next, for $n\geq 1$ and $u \in \R$, define :
$$f_n(u)= 1_{\{|u|<\frac{2\e}{\mu_0} \sqrt{n}\}} \, \sqrt{n} \P[S_{\fl{\frac{n}{\mu_0}+u
\sqrt{n}}}=n] \quad, \quad g_n(u)=1_{\{|u|<\frac{2\e}{\mu_0}
\sqrt{n}\}} \, B'_n \P[W'_{\fl{\frac{n}{\mu_0}+u \sqrt{n}}-n}=n-1].$$
The strong version of the Local Limit Theorem (Theorem
\ref{thm:strongLL}) implies that there exists $C>0$ such that $|f_n(u)| \leq C\min\left(1,
\frac{1}{u^2} \right)$ for
all $n>1$ and $u \in \R$. To bound $g_n$, write:
$$g_n(u)=1_{\{|u|<\frac{2\e}{\mu_0}
\sqrt{n}\}} \,  \frac{B'_n}{B'_{\fl{\frac{n}{\mu_0}+u \sqrt{n}}-n}}
B'_{\fl{\frac{n}{\mu_0}+u \sqrt{n}}-n} \P[W'_{\fl{\frac{n}{\mu_0}+u
\sqrt{n}}-n}=n-1].$$
Proposition \ref{prop:slow} (ii) implies that there exists $C'>0$ such
 that ${B'_n}/{B'_{\fl{{n}/{\mu_0}+u \sqrt{n}}-n}}
 \leq C'$ for every $n$ sufficiently large
and $|u|<\frac{2\e}{\mu_0} \sqrt{n}$, and then (\ref{eq:ll}) entails that there exists $C>0$ such that for all $n>1$ and $u \in
\R$ we have $|g_n(u)| \leq C$.
By the preceding bounds on $f_n$ and $g_n$, we can apply the dominated convergence theorem to the right-hand side of \eqref{eq:somint} and we get:

\begin{equation}\label{eq:int}\lim_{n \rightarrow \infty} h'(n) n^{1/\theta+1} \Pmu[ \lt=n] =
\mu_0 \int_{- \infty}^{+ \infty}du \, F(u) \frac{1}{\sqrt{2
\pi (1-\mu_0)}} e^{- \frac{\mu_0 ^2}{2(1-\mu_0)} u^2}.\end{equation}

Finally, we need to identify the value of the integral in \eqref{eq:int} and to
express $h'$ in terms of $h$. We again distinguish two cases. First suppose that $ \sigma^2 < \infty$. An explicit computation of the right-hand side of \eqref{eq:int} gives:
$$ \frac{ \sigma'}{ \sqrt {2}} n ^ {3/2}\Prmu{\lt = n} \quad \mathop{ \longrightarrow}_{n \rightarrow \infty} \quad  \sqrt{ \frac{1}{4 \pi} \cdot \frac{ \mu_0 \sp}{ \mu_0/(1- \mu_0)+ \sp(1- \mu_0)}}.$$ 
A simple calculation gives $\sp=(\sigma^2-\mu_0)/(1-\mu_0)-\left({\mu_0}/(1-\mu_0)
\right)^2$, which entails: $$\Prmu{\lt = n}  \quad
\mathop{\sim}_{n \rightarrow \infty}\quad  \sqrt{ \frac{\mu_0}{2\pi \sigma^2}} {n^{ - 3/2}}.$$

When $ \sigma^2 = \infty$, we have
$F(u)=\left({\mu_0}/(1-\mu_0)\right)^{\frac{1}{\theta}}
p_1(0)$ so that \eqref{eq:int} immediately gives that
$h'(n) n^{1/\theta+1} \Pmu[ \lt=n]$ converges towards $
\left({\mu_0}/(1-\mu_0)\right)^{\frac{1}{\theta}} p_1(0)$ as $n \rightarrow \infty$.
By Lemma \ref{lem:hhp}, we conclude that: $$ \Pmu[ \lt=n]\quad
\mathop{\sim}_{n \rightarrow \infty}\quad \mu_0^{\frac{1}{\theta}}
p_1(0) \frac{1}{h(n) n^{1/\theta+1}}.$$ Note that this formula
is still valid for $\theta=2$. This concludes the proof in the non-lattice case.
\end{proof}

\subsection{The Lattice case}

We now sketch a proof of Theorem \ref{thm:ltn} when $Y$ is lattice.

\begin{proof}[Proof of Theorem \ref{thm:ltn} when $Y$ is lattice]
For (i), by Proposition \ref{prop:K},
$\Prmu{\lt=n+1}>0$ if and only if there exists $k \geq 0$ such that
$\P[W'_k=n]>0$. As a consequence, $\Prmu{\lt=n+1}>0$ if and only if $n$ can be written as a sum of elements of $\textnormal {supp} (Y)$. Since $ \textnormal{supp} (Y) \subset b-1+d \Z$, it follows that $\Prmu{\lt=n+1}=0$ if $n$ is not divisible by $\gcd(b-1,d)$, and it is an easy number theoretical exercise to show that there exists $N>0$ such that for $n \geq N$, $\Prmu{\lt =
n+1} > 0$ if $n$ is a multiple of $\gcd(b-1,d)$.

The asymptotic estimate of (ii) is obtained exactly as in the
non-lattice case by making use of the Local Limit Theorem for
lattice random variables (see e.g. \cite[Theorem 4.2.1]{IL}). We omit the argument to avoid technicalities.
\end{proof}

\begin{rem}Let us briefly discuss the extension of the preceding results to the case where $ \mu$ is periodic. In this case, $Y$ is necessarily lattice. Indeed, the property
$ \supp(\mu) \subset d \Z$ implies $ \supp(Y)  \subset d \Z-1$. The same reasoning as above shows that
Theorem \ref{thm:ltn} remains valid in this case. However,
the span of supp$(Y)$ is not necessarily equal to the span of supp$(\mu)$. Consequently, $\Prmu{\lt=n}=0$ can hold for infinitely many $n$
(for example if the support of $\mu$ is $\{0,28,40,52\}$) or for
finitely many $n$ (for instance if the support of $\mu$ is
$\{0,3,6\}$).\end{rem}

\section{Conditioning on having at least $n$ leaves}

In this section, we show that the scaling
limit of a $\GW_\mu$ tree conditioned on having at
least $n$ leaves is the same (up to constants) as that of a $\GW_\mu$ tree
conditioned on having total progeny at least $n$. The argument goes as
follows. By the large deviation result obtained in Section 2 (which
states that if a $\GW_\mu$ tree has $n$ leaves, then the probability
that its total progeny does not belong to $[n/\mu_0 - n^{3/4},
n/\mu_0+n^{3/4}]$ decreases exponentially fast in $n$), we establish
that the probability measures $\Pmuzgeqn$ and $\Pmu[ \, \cdot \, |
\, \lt \geq \mu_0 n -n^{3/4}]$ are close to each other for large $n$.
The fact that the rescaled contour function of a $\GW_\mu$ tree under
$\Pmuzgeqn$ converges in distribution then allows us to conclude.

Henceforth, if $I$ is a closed subinterval of $\R_+$, $\C(I,\R)$
stands for the space of all continuous functions from $I$ to $\R_+$,
which is equipped with the topology of uniform convergence on every
compact subset of $I$.

Recall that $ \mu$ is a probability distribution on $ \N$ satisfying the hypothesis $ (H _ \theta)$ for some $ \theta \in (1,2]$. Recall also the definition of the sequence $(B_n)$, introduced just before Lemma
\ref{lem:locallimit}. Also recall the notation $C_t (\tau)$ for the contour function of a tree $ \tau$ introduced in Definition \ref {def:fonctions}.

\begin{thm}[Duquesne]\label{thm:cv1}There exists a random continuous
function on $[0,1]$ denoted by $\H$ such that if $\t_n$ is a tree
distributed according to $\Pmu[\, \cdot \, | \, \zt =n]$:
$$\left(\frac{B_{n}}{n} C_{2nt} (\t_n);\, 0 \leq t \leq 1 \right)
\quad\mathop{\longrightarrow}^{(d)}_{n \rightarrow \infty}
\quad(\H_t;\, 0 \leq t \leq 1),$$ where the convergence holds in the sense
of weak convergence of the laws on $\C([0,1],\R)$.
\end{thm}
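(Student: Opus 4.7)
The plan is to establish convergence in three stages, working from the Lukasiewicz path up to the contour function through the intermediate height function.

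Step 1: Convergence of the Lukasiewicz path. By Proposition \ref{prop:RW}, under $\Pmu[\,\cdot\,|\,\zt=n]$ the path $(\W_k(\t_n))_{0 \leq k \leq n}$ is distributed as $(W_0,W_1,\ldots,W_n)$ under $\P[\,\cdot\,|\,\z_1=n]$, which is the walk with jump law $\nu$ conditioned on the event $\{W_k \geq 0 \text{ for } 0 \leq k \leq n-1,\ W_n = -1\}$. Since $\nu$ is centered and in the domain of attraction of the spectrally positive stable law of index $\theta$, the unconditioned functional invariance principle gives $(B_n^{-1} W_{\lfloor nt \rfloor})_{0 \leq t \leq 1} \to (X_t)_{0 \leq t \leq 1}$. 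Passing to the bridge is handled via an absolute continuity argument using the local limit theorem (Theorem \ref{thm:locallimit} (iii)), and the positivity conditioning is then removed by a Vervaat-type cyclic shift, identifying the limit as the normalized excursion $\X$. Thus $(B_n^{-1}\W_{\lfloor nt \rfloor}(\t_n))_{0 \leq t \leq 1} \to \X$ in distribution.

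Step 2: Convergence of the height function. The identity (\ref{eq:Hmes}) writes $H_k(\t_n)$ as the number of weak running minima of $\W(\t_n)$ looking backward from time $k$. Its continuous analogue is the local time at $0$ of the reversed infimum process of $\X$, which is precisely $\H$. The obstruction is that the map $W \mapsto H$ is discontinuous for the Skorokhod topology, so one cannot simply compose Step 1 with a continuous functional. Instead, one combines (\ref{eq:Hmes}) with the exchangeability of the excursion intervals of $W$ above its running infimum to compute the finite-dimensional limits of $(B_n H_{\lfloor nt \rfloor}(\t_n)/n)$, and complements this with a moment bound on the increments of $H$ to get tightness. This yields $(B_n H_{\lfloor nt \rfloor}(\t_n)/n)_{0 \leq t \leq 1} \to \H$ jointly with the convergence of Step 1.

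Step 3: Passage to the contour function. Let $b_k$ denote the last visit time of the contour exploration to the vertex $u(k)$. A direct inspection shows $b_k = 2k - H_k(\t_n) + 2\sum_{j<k}\mathbf{1}_{\{u(j) \text{ is not a leaf ancestor of } u(k)\}}$, yielding $|b_k - 2k| \leq 2\sup_{j \leq k} H_j(\t_n)$; moreover $b_{k+1} - b_k \leq 2 + k_{u(k)}$. From Step 2, $\sup_{t \leq 1} B_n H_{\lfloor nt \rfloor}(\t_n)/n = O_\P(1)$, so the reparametrization $t \mapsto b_{\lfloor nt \rfloor}/(2n)$ differs from the identity by $o_\P(1)$ uniformly, and hence
\[
\sup_{0 \leq t \leq 1} \bigl| C_{2nt}(\t_n) - H_{\lfloor nt \rfloor}(\t_n) \bigr| \;=\; o_\P(n/B_n),
\]
by the Lipschitz-with-slope-one property of $C$ and the control on the spacings $b_{k+1}-b_k$. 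Multiplying by $B_n/n$ and combining with Step 2 gives the claimed convergence of $(B_n C_{2nt}(\t_n)/n)$ to $\H$.

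The principal obstacle is Step 2: the discontinuity of the functional $W \mapsto H$ forces one to prove convergence of the height process directly, by combining a finite-dimensional computation based on the cyclic/exchangeable structure of the excursion intervals with a tightness estimate. Steps 1 and 3 are, by comparison, relatively standard, the former reducing to a bridge-to-excursion argument via the local limit theorem and the latter being a deterministic pathwise comparison controlled by the output of Step 2.
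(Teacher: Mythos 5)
First, a point of reference: the paper does not prove this statement --- it is quoted from Duquesne \cite{Duquesne} (see also \cite{Kautre}) --- so the comparison is with the cited proof. Your three-stage architecture (Lukasiewicz path, then height process, then contour function) is exactly the architecture of that proof, and you correctly locate the crux: the map from the Lukasiewicz path to the height process is not continuous for the Skorokhod topology, so Step 2 cannot be deduced from Step 1 by the continuous mapping theorem. Step 1 is standard and correctly sketched. In Step 3 there is a slip: the first-visit time of the contour exploration to $u(k)$ is exactly $b_k=2k-H_k(\t_n)$, with no extra sum; your additional term $2\sum_{j<k}\mathbf{1}_{\{\cdots\}}$ is wrong (take the tree $\{\emptyset,1,11,2\}$ and $k=3$) and would contradict your own bound $|b_k-2k|\leq 2\sup_{j\leq k}H_j(\t_n)$. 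With the correct formula, your pathwise comparison is precisely the argument the paper itself uses later in the proof of Theorem \ref{thm:cvG}.

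The genuine gap is Step 2, which you rightly flag as the main obstacle but then dispose of in one sentence. ``Exchangeability of the excursion intervals above the running infimum'' plus ``a moment bound for tightness'' is not the mechanism, and it is not clear how finite-dimensional limits would be extracted from it. The actual argument (\cite[Section 2.3]{DuquesneLG}, \cite{Duquesne}) is: time-reverse the walk at time $k$, so that by \eqref{eq:Hmes} the quantity $H_k$ becomes the number of weak ascending ladder epochs of the reversed walk on $[0,k]$; a renewal-type ratio limit theorem for the ladder epoch counting process then identifies $\frac{B_n}{n}H_{\lfloor nt\rfloor}$ asymptotically with the local time spent by the (reversed) limiting L\'evy process at its running supremum, which is precisely the definition of the continuous height process $\H$. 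This yields joint convergence of $(W,H)$ for the unconditioned walk, and one must still transfer it to $\Pmu[\,\cdot\,|\,\zt=n]$; tightness of the rescaled height process under this conditioning is itself delicate in the stable case and is handled by a time-reversal/Vervaat argument of the same flavour as Sections 5--6 of the present paper. Without the ladder-epoch/local-time identification and the transfer of tightness to the conditioned law, Step 2 is a restatement of the difficulty rather than a resolution of it.
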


\begin{proof}See \cite[Theorem 3.1]{Duquesne} or \cite{Kautre}.\end{proof}

\begin{rem} The random function $\H$, can be identified as the
normalized excursion of the height process associated to the spectrally positive stable process $X$. The notion of the height process was
introduced in \cite{LeJan} and studied in great detail in
\cite{DuquesneLG}; see Section 5.1 for a definition. \end{rem}

Using Theorem \ref{thm:cv1}, we shall prove that for every bounded nonnegative
continuous function $F$ on $\C([0,1],\mathbb{R})$  the following
convergence holds:
\begin{equation}\label{eq:cvamontrer}\E_\mu \left[ \left. F\left(\frac{B_{\zt}}{\zt} C_{2\zt t} (\tau); \, 0 \leq t \leq 1 \right) \right | \,
\lt\geq n\right]\quad\mathop{\longrightarrow}_{n \rightarrow
\infty}\quad \E[F(\H)].\end{equation}

Recall that $\Pmuj$ stands for the probability measure on $\T^j$
which is the distribution of $j$ independent $\GW_\mu$ trees.

\begin{lem}\label{lemma:passage}Fix $1 \leq j \leq n$. Let $U$ be a bounded nonnegative measurable
function on $ \mathbb {T}^j$. Then: $$|\Esmuj{U(\bf) 1_{{\zeta(\bf)}
\geq n}}-\Esmuj{U(\bf) 1_{{\l(\bf)} \geq \mu_0n-n^{3/4}}}| \leq
 \norme{U} \Pmuj\left[n-(\mu_0^{-1}+1)n^{3/4} \leq
{\zeta(\bf)} \leq n\right]+oe_{1/2}(n)$$ where the estimate $oe_{1/2}(n)$ is
uniform in $j$.
\end{lem}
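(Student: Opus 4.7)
The plan is to use the elementary inequality
$$\bigl|\Esmuj{U(\bf) 1_A}-\Esmuj{U(\bf) 1_B}\bigr| \;\leq\; \norme{U}\bigl(\Pmuj[A\setminus B]+\Pmuj[B\setminus A]\bigr)$$
with $A=\{\zeta(\bf)\geq n\}$ and $B=\{\lambda(\bf)\geq \mu_0 n-n^{3/4}\}$, and then control the two symmetric-difference probabilities via the large-deviation estimates of Lemma~2.5. The heuristic is that $\lambda(\bf)\approx \mu_0\zeta(\bf)$, so $A$ and $B$ differ only when the ratio $\lambda/\zeta$ deviates substantially from $\mu_0$, or when $\zeta(\bf)$ happens to lie in a narrow transition window of width $\mathcal{O}(n^{3/4})$ just below $n$. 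The latter window will produce the explicit first term on the right-hand side; everything else should be $oe_{1/2}(n)$.

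For $\Pmuj[A\setminus B]$: on this event, $\zeta(\bf)=k$ for some $k\geq n$ and $\mu_0 k-\lambda(\bf)\geq \mu_0(k-n)+n^{3/4}$. I would verify by a quick two-case argument (namely $n\leq k\leq 2n$, then $k>2n$) that there exists a constant $c>0$, depending only on $\mu_0$, such that $\mu_0 k-\lambda(\bf)\geq c\,k^{3/4}$ for every $k\geq n$ and every $n$ large enough. Decomposing according to the value of $\zeta(\bf)$,
$$\Pmuj[A\setminus B]\;\leq\;\sum_{k\geq n}\Pmuj\!\left[\zeta(\bf)=k,\;\Bigl|\tfrac{\lambda(\bf)}{k}-\mu_0\Bigr|>\tfrac{c}{k^{1/4}}\right],$$
and applying Lemma~2.5(i) with $\delta=c$ uniformly in $j$ gives each term as $oe_{1/2}(k)$. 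Remark~2.4 then yields the sum as $oe_{1/2}(n)$, uniformly in $j$.

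For $\Pmuj[B\setminus A]$: I split along $\zeta(\bf)$ into the window $n-(\mu_0^{-1}+1)n^{3/4}\leq \zeta(\bf)<n$ and its complement $\zeta(\bf)<n-(\mu_0^{-1}+1)n^{3/4}$. The window contributes at most $\Pmuj[n-(\mu_0^{-1}+1)n^{3/4}\leq \zeta(\bf)\leq n]$, which is exactly the explicit term on the right-hand side. On the complementary event, we have $\mu_0\zeta(\bf)<\mu_0 n-(1+\mu_0)n^{3/4}$, hence $\lambda(\bf)-\mu_0\zeta(\bf)>\mu_0 n^{3/4}\geq \mu_0\,\zeta(\bf)^{3/4}$ (using $\zeta(\bf)<n$), i.e.\ $|\zeta(\bf)-\lambda(\bf)/\mu_0|>\zeta(\bf)^{3/4}$. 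Decomposing now by the value of $\lambda(\bf)$, with $m=\lceil\mu_0 n-n^{3/4}\rceil$,
$$\Pmuj\bigl[\zeta(\bf)<n-(\mu_0^{-1}+1)n^{3/4},\;\lambda(\bf)\geq m\bigr]\;\leq\;\sum_{\ell\geq m}\Pmuj\!\left[\lambda(\bf)=\ell,\;\Bigl|\zeta(\bf)-\tfrac{\ell}{\mu_0}\Bigr|>\zeta(\bf)^{3/4}\right],$$
which Lemma~2.5(ii) (uniformly in $j$) together with Remark~2.4 bounds by $oe_{1/2}(m)=oe_{1/2}(n)$, because $m$ is linear in $n$.

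Adding the two contributions delivers the stated inequality, with uniformity in $j$ inherited from the uniformity in Lemma~2.5. The only mildly delicate point is verifying the lower bound $\mu_0 k-\lambda(\bf)\geq c\,k^{3/4}$ for arbitrarily large $k\geq n$ in the estimate for $A\setminus B$, since naively $k^{3/4}$ grows faster than $n^{3/4}$; the two-regime split above handles it cleanly.
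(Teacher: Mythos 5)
Your proof is correct and follows essentially the same route as the paper: both arguments reduce the discrepancy to the two bad events controlled by Lemma \ref{lem:GdesDev2} (i) and (ii), with the transition window $[n-(\mu_0^{-1}+1)n^{3/4},\,n]$ for $\zeta(\bf)$ producing the explicit term. The only organizational difference is that you bound the symmetric difference of $\{\zeta(\bf)\geq n\}$ and $\{\l(\bf)\geq \mu_0 n-n^{3/4}\}$ directly (which requires your small two-regime check that $\mu_0(k-n)+n^{3/4}\geq c\,k^{3/4}$ for all $k\geq n$), whereas the paper compares both expectations to the intermediate quantity $\Esmuj{U(\bf)1_{\zeta(\bf)\geq n}1_{\l(\bf)\geq \mu_0\zeta(\bf)-\zeta(\bf)^{3/4}}}$.
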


\begin{proof}
First note that:
\begin{eqnarray}\Esmuj{U(\bf) 1_{{\zeta(\bf)} \geq n}} &=& \d
 {\Esmuj{U(\bf) 1_{\zeta(\bf) \geq n} 1_{{\l(\bf)} \geq \mu_0 {\zeta(\bf)} -
{\zeta(\bf)}^{3/4} }}}+
\d {\Esmuj{U(\bf) 1_{\zeta(\bf) \geq n} 1_{{\l(\bf)} < \mu_0 {\zeta(\bf)} - {\zeta(\bf)}^{3/4} }}} \notag\\
&=&  \d {\Esmuj{U(\bf) 1_{\zeta(\bf) \geq n} 1_{{\l(\bf)} \geq \mu_0
{\zeta(\bf)} - {\zeta(\bf)}^{3/4} }}} + oe_{1/2}(n), \label {eq:lempassage}\end{eqnarray}
where we have used Lemma
\ref{lem:GdesDev2} (i) in the last equality.

 Secondly, write:
\begin{eqnarray*}\Esmuj{U(\bf) 1_{{\l(\bf)} \geq \mu_0 n - n^{3/4}}}&=& \Esmuj{U(\bf) 1_{{\l(\bf)} \geq \mu_0 n - n^{3/4}} 1_{{\zeta(\bf)} \geq n}}+
\Esmuj{U(\bf) 1_{{\l(\bf)} \geq \mu_0 n - n^{3/4}} 1_{{\zeta(\bf)} <
n}}
\end{eqnarray*}
Let $C_n$ and $D_n$ be respectively the first and the second term appearing in the right-hand side. To simplify notation, set $ \a(n)=  \mu_0 n - n^{3/4}$ for $n \geq 1$. Then, by
Lemma \ref{lem:GdesDev2} (ii), we have for $n$ large enough:
\begin{eqnarray*}D_n&=&\Esmuj{U(\bf) 1_{{\l(\bf)} \geq   \a(n), \, {\zeta(\bf)} < n, \, \left| {\zeta(\bf)}-\frac{{\l(\bf)}}{\mu_0} \right| \leq {\zeta(\bf)}^{3/4}}} +  oe_{1/2}(n) \\
& \leq &  \Esmuj{U(\bf) 1_{{\l(\bf)} \geq  \a(n), \, {\zeta(\bf)} < n, \, {\zeta(\bf)} \geq \frac{{\l(\bf)}}{\mu_0} - {\zeta(\bf)}^{3/4}}} +  oe_{1/2}(n)  \\
& \leq& \norme {U} \Prmuj{n-(\mu_0^{-1}+1)n^{3/4} \leq {\zeta(\bf)} \leq n}+  oe_{1/2}(n)
\end{eqnarray*}
We next consider $C_n$.   Choose $n$ sufficiently large so that the
function $x \mapsto \a(x)$ is increasing over $[\mu_0 n
- n^{3/4}, \infty)$ and write:
\begin{eqnarray*}
C_n&=&\Esmuj{U(\bf)  1_{{\zeta(\bf)} \geq n, \, {\l(\bf)} \geq   \a(\zeta(\bf)) }} + \Esmuj{U(\bf) 1_{{\l(\bf)} \geq  \a (n) , \, {\zeta(\bf)} \geq n, \, {\l(\bf)} < \a( {\zeta(\bf)})  }} \\
&\leq& \Esmuj{U(\bf)  1_{{\zeta(\bf)} \geq n,  \, {\l(\bf)} \geq  \a(\zeta(\bf))  }} + \Esmuj{U(\bf)  1_{{\zeta(\bf)} \geq n,  \, {\l(\bf)} < \a(\zeta(\bf))  }} \\
&=& \Esmuj{U(\bf)  1_{{\zeta(\bf)} \geq n, \, {\l(\bf)} \geq  \a(
{\zeta(\bf)})  }} + oe_{1/2}(n)
\end{eqnarray*}
by Lemma \ref{lem:GdesDev2} (i).

By the preceding estimates we have, for $n$ large:
\begin{eqnarray*}
0 &\leq& \Esmuj{U(\bf) 1_{{\l(\bf)} \geq \mu_0 n - n^{3/4}}} - {\Esmuj{U(\bf) 1_{\zeta(\bf) \geq n} 1_{{\l(\bf)} \geq \mu_0
{\zeta(\bf)} - {\zeta(\bf)}^{3/4} }}} \\
&\leq&  \norme {U}
\Prmuj{n-(\mu_0^{-1}+1)n^{3/4} \leq {\zeta(\bf)} \leq n}+oe_{1/2}(n)
\end{eqnarray*}
and by combining this bound with \eqref{eq:lempassage} we get the desired estimate.\end{proof}

\begin{prop}\label{prop:passage}Let $U, (U_n)_{n \geq 1}: \mathbb{T} \rightarrow \R_+$ be uniformly bounded measurable
functions, meaning that there exists $M>0$ such that for all $n \geq
1$ and $ \tau \in \T$, $U_n(\tau) \leq M$ and $U( \tau) \leq M$.
\begin{enumerate}
\item[(i)] If $\Esmu{U(\tau) | \, \zt = n}$ converges when $n$ tends to
infinity, then $\Esmu{U(\tau) | \,\zt\geq n}$ converges to the same
limit.
\item[(ii)] If $\Esmu{U_n(\tau) | \,\zt \geq n}$ converges when $n$ tends to
infinity, then $\Esmu{U_{n}(\tau) | \, {\lt}\geq \lceil \mu_0 n -{n} ^{3/4} \rceil}$ converges to
the same limit.
\end{enumerate}
\end{prop}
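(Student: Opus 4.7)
The plan for (i) is to use the disintegration
\begin{equation*}
\Esmu{U(\tau) \mid \zt \geq n} \;=\; \sum_{k = n}^{\infty} \Esmu{U(\tau) \mid \zt = k} \cdot \frac{\Pmu[\zt = k]}{\Pmu[\zt \geq n]},
\end{equation*}
which expresses the left-hand side as a convex combination of the values $a_k := \Esmu{U(\tau) \mid \zt = k}$ for $k \geq n$. Aperiodicity together with Remark \ref{rem:lattice} guarantees that $a_k$ is defined for all large $k$, and by assumption $a_k \to L$ for some limit $L$. Given $\varepsilon > 0$, pick $N$ so that $|a_k - L| < \varepsilon$ for every $k \geq N$; then for $n \geq N$ the above convex combination differs from $L$ by at most $\varepsilon$. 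This is essentially immediate.

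For (ii), set $m := \lceil \mu_0 n - n^{3/4} \rceil$ and apply Lemma \ref{lemma:passage} with $j = 1$, once to $U = U_n$ and once to $U \equiv 1$:
\begin{align*}
\bigl|\Esmu{U_n(\tau) \, 1_{\zt \geq n}} - \Esmu{U_n(\tau) \, 1_{\lt \geq m}}\bigr| &\leq M \, \Pmu[n - (\mu_0^{-1}+1) n^{3/4} \leq \zt \leq n] + oe_{1/2}(n), \\
\bigl|\Pmu[\zt \geq n] - \Pmu[\lt \geq m]\bigr| &\leq \Pmu[n - (\mu_0^{-1}+1) n^{3/4} \leq \zt \leq n] + oe_{1/2}(n).
\end{align*}
The key step, which I view as the main (and only mildly delicate) obstacle, is to prove
\begin{equation*}
\Pmu[n - (\mu_0^{-1}+1) n^{3/4} \leq \zt \leq n] \;=\; o\bigl(\Pmu[\zt \geq n]\bigr).
\end{equation*}
I would rewrite the left-hand side as $\Pmu[\zt \geq n - (\mu_0^{-1}+1)n^{3/4}] - \Pmu[\zt \geq n+1]$ and use the asymptotic $\Pmu[\zt \geq n] \sim \theta p_1(0)/(n^{1/\theta} h(n))$ from Lemma \ref{lem:locallimit}, controlling the ratio $h(n - (\mu_0^{-1}+1)n^{3/4})/h(n) \to 1$ via the slowly varying property (Proposition \ref{prop:slow}); a direct expansion then shows the difference is of order $n^{-1/4}\Pmu[\zt \geq n]$, hence negligible. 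Since $\Pmu[\zt \geq n]$ decays only polynomially, the $oe_{1/2}(n)$ terms are automatically $o(\Pmu[\zt \geq n])$ as well.

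To conclude, combine these estimates: the second bound gives $\Pmu[\lt \geq m] = \Pmu[\zt \geq n](1 + o(1))$, while the first, together with the uniform bound $|U_n| \leq M$ and the assumed boundedness of $\Esmu{U_n \mid \zt \geq n}$, gives $\Esmu{U_n \, 1_{\lt \geq m}} = \Esmu{U_n \, 1_{\zt \geq n}} + o(\Pmu[\zt \geq n])$. Dividing yields
\begin{equation*}
\Esmu{U_n \mid \lt \geq m} \;=\; \frac{\Esmu{U_n \, 1_{\zt \geq n}} + o(\Pmu[\zt \geq n])}{\Pmu[\zt \geq n](1 + o(1))} \;=\; \Esmu{U_n \mid \zt \geq n} + o(1),
\end{equation*}
which is exactly the claimed convergence to the same limit.
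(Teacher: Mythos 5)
Your part (i) is exactly the paper's argument (the convex-combination disintegration), so nothing to add there. For part (ii) your route is a genuine, and in fact slightly more economical, variant of the paper's. The paper also reduces everything to Lemma \ref{lemma:passage} plus the smallness of $\Pmu\left[n-(\mu_0^{-1}+1)n^{3/4}\leq \zt\leq n\right]$, but to compare the two normalizing constants it invokes Theorem \ref{thm:ltn}~(ii) to get $\Pmu[\lt\geq \alpha(n)]\sim \theta\mu_0^{1/\theta}p_1(0)/(n^{1/\theta}h(n))$ and then divides by the estimate of Lemma \ref{lem:locallimit}. You instead obtain $\Pmu[\lt\geq m]=\Pmu[\zt\geq n](1+o(1))$ by applying Lemma \ref{lemma:passage} a second time with $U\equiv 1$; this bypasses the (nontrivial) Theorem \ref{thm:ltn} entirely and makes the proposition self-contained given Sections 1--2. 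Both proofs then hinge on the same key estimate, namely that the interval probability is $o(\Pmu[\zt\geq n])$.

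One technical point in how you propose to prove that key estimate: writing the interval probability as $\Pmu[\zt\geq n-(\mu_0^{-1}+1)n^{3/4}]-\Pmu[\zt\geq n+1]$ and ``expanding'' the tail asymptotic of Lemma \ref{lem:locallimit} does not work as stated. Each tail is known only up to a multiplicative $1+o(1)$, and those uncontrolled $o(1)\cdot\Pmu[\zt\geq n]$ errors can dominate the difference you are trying to capture, which is of the much smaller order $n^{-1/4}\Pmu[\zt\geq n]$. The correct (and equally short) argument is to sum the local estimate: by the first asymptotic of Lemma \ref{lem:locallimit}, $\Pmu[\zt=k]\leq 2p_1(0)/(k^{1/\theta+1}h(k))$ for all $k$ in the range, and Proposition \ref{prop:slow}~(ii) bounds $h(n)/h(k)$ there, so the sum over the $O(n^{3/4})$ values of $k$ is $O\bigl(n^{3/4}\cdot n^{-1/\theta-1}/h(n)\bigr)=O\bigl(n^{-1/4}\Pmu[\zt\geq n]\bigr)$. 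With that repair your proof is complete and correct.
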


\begin{proof}
Using the formula:
$$\E_\mu \left[U(\tau) \, | \, \zeta(\tau)\geq
n\right]=\frac{1}{\Pmu[\zeta(\tau) \geq n]}
\sum_{k=n}^\infty\P_\mu[\zt=k] \cdot \E_\mu\left[U(\tau) \, | \,
\zeta(\tau)=k\right]$$ it is an easy exercise to verify that the first
assertion is true.

We turn to the proof of (ii). Fix $0 < \eta < 1/4$. By Lemma \ref{lem:locallimit}, we may suppose that $n$ is sufficiently large so that $ \Prmu { \zt \geq n} \geq c_3 n^ {-1 / \theta- \eta}$ for a constant $c_3>0$. Next, setting again $ \a(n)=\mu_0n-n^{3/4}$, we have:
\begin{eqnarray*}&& \left|\Esmu{U_n(\tau) | \zt \geq n}-\Esmu{U_n(\tau) |
\lt\geq \a(n)} \right|\\
 && \quad   \leq \quad  \left|\frac{\Esmu{U_n(\tau) 1_{\zt
\geq n}}}{\Prmu{\zt \geq n}}-\frac{\Esmu{U_n(\tau) 1_{\lt\geq
\a(n)}}}{\Prmu{\zt \geq n}} \right| + \frac{\Esmu{U_n(\tau)
1_{\lt\geq \a(n)}}}{\Prmu{ \lt \geq \a(n) }} \left|
\frac{\Pmu[\lt\geq \a(n)]}{\Pmu[\zt
\geq n]}- 1 \right| \\
 && \quad   \leq \quad M
\frac{n^{1/\theta+\eta}}{c_3}\Pmu\left[n-(\mu_0^{-1}+1)n^{3/4} \leq
\zt \leq n\right]+M\left| \frac{\Pmu[\lt\geq
\a(n)]}{\Pmu[\zt \geq n]}- 1 \right|+ oe_{1/2}(n),\end{eqnarray*} where
we have used Lemma \ref{lemma:passage} in the last inequality. By Lemma \ref {lem:locallimit}, the first term of the right-hand side tends to $0$. From Theorem \ref {thm:ltn} (ii), it is easy to get that $\Pmu[\lt\geq n] \sim  \theta \mu_0  ^  {1/ \theta} p_1(0)/ (n^ {1/ \theta} h(n))$ as $n \rightarrow \infty$. By combining this estimate with Lemma \ref{lem:locallimit}, we obtain that ${\Pmu[\lt\geq
 \alpha(n)]}/{\Pmu[\zt \geq n]}$ tends to $1$ as $n \rightarrow \infty$. This completes the proof.\end{proof}

\begin{thm}\label{thm:inv1}
For $n \geq 1$, let $\t_n$ be a random tree distributed according to
$\Pmu[\, \cdot \, | \, \lt \geq n]$. Then:
\begin{equation}\label{eq:thminv1}\left(\frac{B_{{\z(\t_n)}}}{{\z(\t_n)}}
C_{2{\z(\t_n)} t} (\t_n); \, 0 \leq t \leq 1 \right)
\quad\mathop{\longrightarrow}^{(d)}_{n \rightarrow \infty} \quad
(\H_t;\, 0 \leq t \leq 1),\end{equation} where the convergence holds in
the sense of weak convergence of the laws on $\C([0,1],\R)$.
\end{thm}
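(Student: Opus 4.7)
The plan is to combine Duquesne's invariance principle (Theorem \ref{thm:cv1}) with the two transfer steps of Proposition \ref{prop:passage}, in order to pass from the conditioning ``$\zt=n$'' to the conditioning ``$\lt\geq n$''. Fix a bounded continuous functional $F\colon \C([0,1],\R)\to\R$ and define the bounded measurable function $U\colon \T \to \R_+$ by
\[
U(\tau) \;:=\; F\!\left(\frac{B_{\zt}}{\zt}\, C_{2\zt t}(\tau);\, 0 \leq t \leq 1\right).
\]
By Theorem \ref{thm:cv1} we have $\Esmu{U(\tau)\,|\,\zt=n} \to \E[F(\H)]$ as $n \to \infty$, and a first application of Proposition \ref{prop:passage}(i) upgrades this to $\Esmu{U(\tau)\,|\,\zt\geq n} \to \E[F(\H)]$.

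Next I would apply Proposition \ref{prop:passage}(ii) with the constant sequence $U_n := U$, which is uniformly bounded by $\norme{F}$, to obtain
\[
\Esmu{U(\tau)\,\big|\,\lt \geq \ce{\mu_0 n - n^{3/4}}} \;\longrightarrow\; \E[F(\H)].
\]
It then remains to replace the conditioning on $\lt \geq \ce{\mu_0 n - n^{3/4}}$ by conditioning on $\lt \geq N$ for an arbitrary integer $N \to \infty$. To this end, set $\phi(m) := \ce{\mu_0 m - m^{3/4}}$. Since $\mu_0 \in (0,1)$ and the derivative of $x \mapsto \mu_0 x - x^{3/4}$ tends to $\mu_0$, the function $\phi$ is non-decreasing for $m$ large with successive increments in $\{0,1\}$, and $\phi(m)\to\infty$. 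Hence $\phi$ takes every sufficiently large integer value. Given a large integer $N$, I pick $m(N) \to \infty$ with $\phi(m(N)) = N$, so the preceding convergence along the subsequence $m = m(N)$ reads $\Esmu{U(\tau)\,|\,\lt \geq N} \to \E[F(\H)]$ as $N \to \infty$. (If one worries about the choice of $m(N)$: argue by contradiction, extract a subsequence of $N$'s along which $\Esmu{U\mid\lt\geq N}$ stays $\varepsilon$-away from the limit, rewrite each such $N$ as $\phi(m(N))$, and contradict the convergence along $\phi$.)

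Since $F$ was arbitrary bounded and continuous on $\C([0,1],\R)$, this proves the weak convergence \eqref{eq:thminv1} by the Portmanteau characterization. The substantive work has all been carried out earlier: Section 2 supplies the concentration estimates comparing $\zt$ and $\lt$, Theorem \ref{thm:ltn} gives the polynomial asymptotics for $\Pmu[\lt=n]$ and hence for $\Pmu[\lt\geq n]$, and these feed directly into Lemma \ref{lemma:passage} and Proposition \ref{prop:passage}. The only genuinely new point is the final discrete-inversion step, which I expect to be the mildest obstacle; it is handled automatically by the fact that $\mu_0<1$ forces $\phi$ to have increments in $\{0,1\}$ and therefore to hit every large enough integer.
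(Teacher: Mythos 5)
Your proposal is correct and follows essentially the same route as the paper: Duquesne's theorem, then Proposition \ref{prop:passage}(i), then Proposition \ref{prop:passage}(ii), and finally the observation that $\lceil \mu_0 n - n^{3/4}\rceil$ runs through all sufficiently large integers. The only difference is that you spell out this last surjectivity step (which the paper dispatches in one sentence), and your justification of it via the increments of $m\mapsto \mu_0 m - m^{3/4}$ tending to $\mu_0<1$ is sound.
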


\begin{proof}Let $F$ be a bounded nonnegative continuous function on
$\C([0,1],\mathbb{R})$. By Theorem \ref{thm:cv1}:
$$\E_\mu \left[ \left. F\left(\frac{B_{{\z(\tau)}}}{{\z(\tau)}} C_{2{\z(\tau)} t} (\tau); \, 0 \leq t \leq 1 \right) \right|\, \zt=n \right]\quad\mathop{\longrightarrow}_{n
\rightarrow \infty} \quad \E[F(\H)].$$ Proposition
\ref{prop:passage} (i) entails:
$$\E_\mu \left[ \left. F\left(\frac{B_{{\z(\tau)}}}{{\z(\tau)}} C_{2{\z(\tau)} t} (\tau); \, 0 \leq t \leq 1 \right) \right|\, \zt \geq n \right]\quad\mathop{\longrightarrow}_{n
\rightarrow \infty} \quad \E[F(\H)].$$Proposition \ref{prop:passage}
(ii) then implies:
$$\E_\mu \left[ \left. F\left(\frac{B_{{\z(\tau)}}}{{\z(\tau)}} C_{2{\z(\tau)} t} (\tau); \, 0 \leq t \leq 1 \right) \right|\,   {\lt}\geq \lceil \mu_0 n -{n} ^{3/4} \rceil \right]\quad\mathop{\longrightarrow}_{n \rightarrow
\infty} \quad \E[F(\H)].$$ Since $\lceil \mu_0 n -{n} ^{3/4} \rceil$ takes all positive integer values when $n$ varies, the proof is complete.
\end{proof}

\begin{rem}When the second moment of $\mu$ is finite, $\H=\sqrt{2} \mathbbm {e}$ where $ \mathbbm {e}$ denotes the normalized
excursion of linear Brownian motion. Since the scaling constants
$B_n=\sigma \sqrt{n/2}$ are known explicitly, in that case the theorem can be formulated as:
$$\left(\frac{\sigma}{2 \sqrt{\z(\t_n)}} C_{2{\z(\t_n)} t} (\t_n); \, 0 \leq t \leq 1 \right)
\quad\mathop{\longrightarrow}^{(d)}_{n \rightarrow \infty} \quad
\me.$$
\end{rem}

\section{Conditioning on having exactly $n$ leaves}

\begin{center}\emph{To avoid technical issues, we suppose that $ \supp ( \mu) \backslash  \{0\}$ is non-lattice, so that
$\Prmu{\lt=n}>0$ for $n$ large enough.}
\end{center}

Recall that we have obtained an invariance principle for $\GW_\mu$
trees under the probability distribution $\Pmu[\, \cdot \, | \, \lt
\geq n]$. Our goal is now to establish a similar result for trees under the probability distribution $\Pmu[\, \cdot \,
| \, \lt = n]$. The key idea is to use an ``absolute continuity''
property. Let us briefly sketch the main step of
the argument.

Let $k \geq 1$. If $\tau$ is a tree and if $u(0), u(1), \ldots$ are the vertices of $ \tau$ in lexicographical order, let $T_k( \tau)$ be the first index $j$ such that $  \{u(0),u(1), \ldots, u(j)\}$ contains $k$ leaves and $T_k ( \tau)= \infty$ if there is no such index. Fix $a \in (0,1)$ and recall the notation $ \W( \tau)$ for the Lukasiewicz path of a tree $ \tau$. Then there exists a positive function $D ^n _a$ on $ \Z_+$ such that, for every nonnegative function $f$ on the space of finite paths in $ \Z$:
$$\Esmu{f\left( \W_{ \cdot \wedge T_{ \fl { an }} (\tau)}(\tau)\right) \, | \, \lt=n}=\Esmu{f\left( \W_{ \cdot \wedge T_{ \fl { an }} (\tau)}(\tau)\right) D^n_a( \W_{T_ { \fl { an } }(\tau) }(\tau)) \, | \,  \lt \geq n}.$$
By combining the invariance principle for trees under $\Pmu[\, \cdot \, | \, \lt
\geq n]$ together with estimates for $D^n_a( \W_{T_ { \fl { an } }(\tau) }(\tau))$ as $ n \rightarrow \infty$, we shall deduce an  invariance principle for
trees under $\Pmu[\, \cdot \, | \, \lt = n]$.

\subsection{The normalized excursion of the Lévy process}

\label{sec:lev}

We follow the presentation of \cite{Duquesne}. The underlying
probability space will be denoted by $(\Omega, \mathcal{F}, \P)$.
Let $X$ be a process with paths in $\D(\R_+, \R)$, the space of
right-continuous with left limits (càdlàg) real-valued functions,
endowed with the Skorokhod $J_1$-topology. We refer the reader to
\cite[chap. 3]{Bill} and \cite[chap. VI]{Shir} for background
concerning the Skorokhod topology. We denote
the canonical filtration generated by $X$ and augmented with the
$\P$-negligible sets by $(\F_t)_{t \geq 0}$. In agreement with the notation in the previous sections, we assume that $X$ is a strictly stable
spectrally positive Lévy process with index $\theta \in (1,2]$ such
that for $\lambda>0$:
\begin{equation}\label{eq:X}\E[\exp(-\lambda X_t)]=\exp(t
\lambda^\theta).\end{equation} See \cite{Bertoin} for the proofs of
the general assertions of this subsection concerning Lévy processes.
In particular, for $\theta=2$ the process $X$ is $\sqrt{2}$ times
the standard Brownian motion on the line. Recall that $X$ has the
following scaling property: for $c>0$, the process $(c^{-1/\theta}
X_{ct}, t \geq 0$) has the same law as $X$. In particular, if we
denote by $p_t$ the density of $X_t$ with respect to the Lebesgue
measure, $p_t$ enjoys the following scaling property:
\begin{equation}\label{eq:scaling}p_ { \lambda s}(x)= \lambda^{-1/\theta} p_s(x \lambda^{-1/\theta})\end{equation}
for $x \in \R$ and $s, \lambda > 0$. The following
notation will be useful: for $s<t$ set
$$I^s_t = \inf_{[s,t]} X, \qquad I_t=\i{0,t} X.$$
Notice that the process $I$ is continuous since $X$ has no negative
jumps.

The process $X-I$ is a strong Markov process and $0$ is regular for
itself with respect to $X-I$. We may and will choose $-I$ as the
local time of $X-I$ at level $0$. Let $(g_i,d_i), i \in \mathcal{I}$
be the excursion intervals of $X-I$ above $0$. For every $i \in \mathcal{I}$ and $ s \geq  0$, set $\omega_s^i= X_{(g_i+s) \wedge d_i}-X_{g_i}$.
We view $ \omega^i$ as an element of the excursion space $ \mathcal {E}$, which is defined by:
$$ \mathcal{E}=  \{ \omega \in \D(\R_+, \R_+); \, \omega(0)=0 \textrm{ and } \z( \omega):=
 \sup\{s>0; \omega(s)>0\,\} \in (0, \infty)  \}.$$
From Itô's excursion theory, the point measure
$$\mathcal{N}( dt d \omega)= \sum_{i \in \mathcal{I}} \delta_{(-I_{g_i},\omega^i)}$$
is a Poisson measure with intensity $dt \Nn(d\omega)$, where $\Nn(d
\omega)$ is a $\sigma$-finite measure on $ \mathcal{E}$ which is called the It\^o excursion measure. Without risk of confusion, we will also use the
notation $X$ for the canonical process on the space $\D(\R_+, \R)$.

Let us define the normalized excursion of $X$. For every $\lambda>0$, define the re-scaling operator $S^{(\lambda)}$
on the set of excursions by:
$$S^{(\lambda)}(\omega)=\left( \lambda^{1/\theta} \omega(s/\lambda), \, s \geq 0\right).$$
Note that $ \Nn(\zeta>t) \in (0, \infty)$ for $t>0$. The scaling property of X shows that the image of $\Nn(\cdot \,| \,
\zeta>t)$ under $S^{(1/\zeta)}$ does not depend on $t>0$. This
common law, which is supported on the càdlàg paths with unit
lifetime, is called the law of the normalized excursion of $X$ and
denoted by $\Nn( \, \cdot \, | \z=1)$. We write $ \X = ( \X_s, 0 \leq s \leq  1)$ for a process distributed according to $\Nn( \, \cdot \, | \z=1)$. In particular, for
$\theta=2$ the process $\X$ is $\sqrt{2}$ times the normalized
excursion of linear Brownian motion. Informally,  $\Nn( \, \cdot \,
| \z=1)$ is the law of an excursion under the Itô measure
conditioned to have unit lifetime.

We will also use the so-called continuous-time height process $H$
associated with $X$ which was introduced in \cite{LeJan}. If
$\theta=2$, $H$ is set to be equal to $X$. If $\theta \in
(1,2)$, the process $H$ is defined for every $t \geq 0$ by:
$$H_t:=\lim_{\e \rightarrow 0} \frac{1}{\e} \int_0 ^t
1_{\{X_s<I^s_t+\e\}}ds,$$ where the limit exists in $\P$-probability and in
$\Nn$-measure on $  \{t < \zeta\}$. The definition of $H$ thus makes sense under $ \P$ or under $\Nn$. The process $H$ has a
continuous modification both under $ \P$ and under $ \Nn$ (see \cite[Chapter 1]{DuquesneLG} for
details), and from now on we consider only this modification. Using simple scaling arguments one can also define $H$ as a continuous random process under $\Nn( \, \cdot \, | \z=1)$. Let us finally mention that the limiting process $ \H$ in Theorem \ref{thm:cv1} has the distribution of $H$ under
$\Nn( \, \cdot \, | \z=1)$.

\subsection{An invariance principle}

Recall that the Lukasiewicz path $ \W(\tau)$ of a tree $\tau \in \T$
is defined up to time $\zt$. We extend it to $\Z_+$ by setting
$ \W_i(\tau)=0$ for $i \geq \zt$. Similarly, we extend the height
function $H(\tau)$ to $\Z_+$ by setting $H_i(\tau)=0$ for $i \geq
\zt$. We then extend $H(\tau)$ to $\R_+$ by linear interpolation,
$$H_t (\tau) = (1 - \{t\})H_{ \fl{t}}(\tau) + \{t\}H_{\fl{t}+1}(\tau), \qquad t \geq 0,$$
where $\{t\} = t-\fl{t}$.

Recall that $ \mu$ is a probability distribution on $ \N$ satisfying the hypothesis $ (H _ \theta)$ for some $ \theta \in (1,2]$. Recall also the notation $h$, $B_n$ introduced just before Lemma
\ref{lem:locallimit}. For technical reasons, we put $B_u = B_ {\fl{u}}$ for $u \geq 1$. It is useful to keep in mind that $B_n= \sigma
\sqrt{n/2}$ when the variance $\sigma^2$ of $\mu$ is finite. We rely on the following theorem.

\begin{thm}[Duquesne \& Le Gall]\label{thm:cvW}Let $\t_n$ be a random tree distributed according to $\Pmuzgeqn$. We have:
$$ \left( \frac{1}{B_n} \W_{ \lfloor nt \rfloor}(\t_n), \frac{B_n}{n} H_{nt}(\t_n)\right)_ {t \geq 0} \quad
\mathop{\longrightarrow}^{(d)}_{n \rightarrow \infty}\quad (
X_t,H_t)_{0 \leq t \leq 1} \textrm{ under }\Nn( \, \cdot \, | \,
\z>1). $$
\end{thm}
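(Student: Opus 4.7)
The plan is to combine the functional invariance principle for the random walk $W$ with It\^o's excursion theory. By Proposition \ref{prop:RW} applied to an infinite forest, the Lukasiewicz path of such a forest is distributed as $(W_k)_{k \geq 0}$ with step law $\nu$, and the individual trees correspond to the excursions of $W$ above its running infimum $\underline{W}_k := \min_{j \leq k} W_j$. Since $\nu$ is centered and in the domain of attraction of the $\theta$-stable law with norming sequence $B_n$, the functional limit theorem for stable domains of attraction gives that $(B_n^{-1} W_{\lfloor nt \rfloor})_{t \geq 0}$ converges in distribution to $(X_t)_{t \geq 0}$, jointly with convergence of the running infima to $(I_t)_{t \geq 0}$. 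This already settles the Lukasiewicz coordinate and provides the skeleton of the excursion structure.

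The crux is to strengthen this to joint convergence including the rescaled height function of the forest. Starting from the combinatorial identity \eqref{eq:Hmes}, $H_n(\bf)$ can be rewritten as a count of strict descending ladder indices of the time-reversed walk. Fluctuation theory, together with the regular variation of $K(x) = \E[W_1^2 1_{\{|W_1| \leq x\}}]$ guaranteed by $(H_\theta)$, then shows that $\tfrac{B_n}{n} H_{nt}(\bf)$ is asymptotically equivalent to a suitable normalization of the number of visits of $W - \underline{W}$ to $0$ on $[0,nt]$; this quantity converges to the local time at $0$ of $X - I$, which by the Le Gall--Le Jan approximation is precisely the continuous-time height process $H_t$ associated with $X$. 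The $L^2$ control of the approximation, based on the number of ``near-minimum'' indices of $W$, is the technical heart of the argument; this joint convergence for the unconditioned forest is exactly the main theorem of \cite{DuquesneLG}, which we invoke.

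Finally, the convergence under $\Prmu[\,\cdot\,|\,\zt \geq n]$ is read off via It\^o's excursion theory applied at the scaling limit. The joint convergence of the preceding step implies that the rescaled point process of excursions of $W - \underline{W}$ above $0$, marked by their height codings, converges to the Poisson measure with intensity $dt\,\Nn(d\omega)$ describing the excursions of $X - I$. Conditioning the first tree to satisfy $\zt \geq n$ amounts to conditioning the first excursion of $W - \underline{W}$ to have duration at least $n$, which after the time rescaling by $n^{-1}$ becomes the conditioning $\{\zeta > 1\}$ under $\Nn$; the rescaled first excursion together with its height coding therefore converges in distribution to $(X,H)$ under $\Nn(\,\cdot\,|\,\zeta > 1)$.

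The principal obstacle lies in the second step: because the discrete height function is a genuinely discontinuous functional of the Lukasiewicz path, its tightness and finite-dimensional convergence cannot be obtained by continuous-mapping arguments and require the delicate local-time approximation of \cite{DuquesneLG}. The passage from the unconditioned to the conditioned limit via It\^o excursions, while notationally heavy, is by contrast essentially formal once the unconditioned joint convergence is known.
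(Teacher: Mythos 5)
Your proposal is correct and follows exactly the route the paper takes: the paper's proof of this theorem is simply a citation to the concluding remark of Section 2.5 of \cite{DuquesneLG}, and your sketch is an accurate outline of the Duquesne--Le Gall argument (functional limit theorem for the Lukasiewicz path, local-time approximation for the height process, then excursion theory to pass from the unconditioned forest to the conditioning $\{\zt\geq n\}$, rescaled to $\{\z>1\}$ under $\Nn$), which you also ultimately invoke. Nothing further is needed.
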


\begin{proof}See the concluding remark of \cite[Section 2.5]{DuquesneLG}.\end{proof}

\subsection{Absolute continuity}

Recall from the beginning of this section the definition of $ T_k ( \tau)$ for a tree $ \tau$.

\begin{prop} \label{prop:abscon}Let $n$ be a positive integer and let $k$ be an integer such that $ 1 \leq k \leq n-1$. To simplify notation, set
$ \W ^ {(k)}(\tau)=( \W_0 ( \tau), \ldots, \W_ {T_k ( \tau)} ( \tau))$. For every bounded function $f: \cup_ {i \geq 1} \Z^i \rightarrow \R_+$
 we have:
$$
\Esmu{f(W ^ {(k)}(\tau))|\, \lt =n}
=
\Esmu{ \left. f( \W ^ {(k)}(\tau)) \frac{\psi_{n-k}( \W_{ T_k ( \tau) }(\tau))/\psi_n(1)}{\psi^*_{n-k}( \W_{T_k ( \tau)}(\tau))/\psi^*_n(1)} \right|\, \lt \geq n},
$$
where
$\psi_p(j)=\Prmuj{\l(\bf)=p}$ and $\psi^*_p(j)=\Prmuj{\l(\bf) \geq p}$ for every integer $p \geq 1$.
\end {prop}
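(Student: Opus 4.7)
The plan is to reduce the claim to the branching (strong Markov) property of the Lukasiewicz walk at the stopping time corresponding to the $k$-th visit to a leaf. By Proposition~\ref{prop:RW}, under $\Pmu$ the Lukasiewicz path $\W(\tau)$ has the same law as $(W_0,\dots,W_\zeta)$, where $W$ is the walk with step distribution $\nu(j)=\mu(j+1)$ and $\zeta=\inf\{i\ge 0: W_i=-1\}$. The index $T_k(\tau)$ is the position of the $k$-th down-step of $W$; $T_k+1$ is a stopping time for the natural filtration of $W$. Fix $n\ge k+1$ and an admissible path $w=(w_0,\dots,w_m)$ with $w_0=0$, increments in $\{-1,0,1,\ldots\}$, exactly $k-1$ of them equal to $-1$, $w_j\ge 0$ for $0\le j\le m$, and $w_m\ge 1$ (the last condition is forced by $\lt\ge k+1$, since $w_m=0$ would mean the walk hits $-1$ at time $m+1$ after visiting only $k$ leaves).

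The key step is the identity
\[
\Pmu[\W^{(k)}(\tau)=w,\ \lt(\tau)=n]=\Pmu[\W^{(k)}(\tau)=w]\cdot\psi_{n-k}(w_m).
\]
To get it, I rewrite the event as ``$W_i=w_i$ for $0\le i\le m$, $W_{m+1}=w_m-1$, and the walk performs exactly $n-k$ further down-steps before hitting $-1$.'' Conditionally on $(W_0,\dots,W_{m+1})=(w_0,\dots,w_m,w_m-1)$, the shifted continuation $(W_{m+1+i}-(w_m-1))_{i\ge 0}$ is an independent copy of $W$, and it hits $-1$ for the first time when the original walk first hits $-w_m$. By Proposition~\ref{prop:RW} applied with $j=w_m$, this continuation is distributed as the Lukasiewicz path of a forest of $w_m$ independent $\GW_\mu$-trees, and the number of down-steps it performs equals the total number of leaves of that forest. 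Hence the conditional probability of seeing $n-k$ further down-steps is exactly $\Pmuj[\lambda(\bf)=n-k]$ with $j=w_m$, i.e.\ $\psi_{n-k}(w_m)$.

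The same argument with $\{\lt=n\}$ replaced by $\{\lt\ge n\}$ yields
\[
\Pmu[\W^{(k)}(\tau)=w,\ \lt(\tau)\ge n]=\Pmu[\W^{(k)}(\tau)=w]\cdot\psi^{*}_{n-k}(w_m).
\]
Dividing these two identities by $\psi_n(1)=\Pmu[\lt=n]$ and $\psi^{*}_n(1)=\Pmu[\lt\ge n]$ respectively gives, for every admissible $w$,
\[
\Pmu[\W^{(k)}(\tau)=w\mid \lt=n]
=\Pmu[\W^{(k)}(\tau)=w\mid \lt\ge n]\cdot
\frac{\psi_{n-k}(w_m)/\psi_n(1)}{\psi^{*}_{n-k}(w_m)/\psi^{*}_n(1)}.
\]
Multiplying by $f(w)$ and summing over all admissible $w$ (on the event $\{\lt\ge n\}$ the final coordinate $\W_{T_k(\tau)}(\tau)$ equals $w_m$, so the density evaluated along $\W^{(k)}(\tau)$ is exactly the right-hand Radon--Nikodym factor in the statement) yields the claimed formula.

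The only genuine difficulty is the bookkeeping around $T_k(\tau)$: one must be careful that the path $\W^{(k)}(\tau)$ stops \emph{just before} the visit of the $k$-th leaf, so that the shift argument above separates cleanly into an initial deterministic segment of length $m+1$ followed by an independent forest of $w_m$ trees, rather than $w_m+1$. Once this is correctly set up, the rest is an application of the Markov property and a straightforward division.
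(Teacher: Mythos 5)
Your proof is correct and follows exactly the route the paper indicates (which it only sketches, deferring to Lemma 10 of Le Gall–Miermont): apply the strong Markov property to the walk at the time of its $k$-th negative jump and use Proposition \ref{prop:RW} to identify the continuation as the Lukasiewicz path of a forest of $\W_{T_k(\tau)}(\tau)$ independent $\GW_\mu$ trees, whose number of down-steps is its number of leaves. The bookkeeping (exactly $k-1$ down-steps in $w$, the forced down-step at time $T_k+1$, and $w_m\ge 1$ on $\{\lt\ge n\}$) is handled correctly.
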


\begin {proof}
Let the random walk $W$ be as in Proposition \ref {prop:RW}. The result follows from the latter proposition and an application of the strong Markov property to the random walk $W$ at the first time it has made $k$ negative jumps. See \cite[Lemma 10]{LGM} for details of the argument in a slightly different context.
\end {proof}

We will also use the following continuous version of Proposition \ref{prop:abscon} (see \cite[Proposition 2.3]{Kautre} for a proof).

\begin{prop}\label{prop:Gamma}For $s>0$ and $x \geq 0$, set $q_s(x)=\frac{x}{s}p_s(-x)$. For every $a \in (0,1)$ and $x>0$ define:
$$\Gamma_a(x)=\frac{\theta q_{1-a}(x)}{\int_{1-a} ^ \infty ds \, q_s(x)}.$$
Then for every measurable bounded function $F : \D([0,a],\R^2) \rightarrow \R_+$:
$$\Nn \left( F( (X_t)_{0 \leq t \leq a }, (H_t)_{0 \leq t \leq a }) | \, \z=1 \right)=\Nn \left( F( (X_t)_{0 \leq t \leq a },(H_t)_{0 \leq t \leq a }) \Gamma_a(X_a) | \, \z>1 \right).$$
\end{prop}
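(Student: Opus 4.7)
The plan is to prove this by disintegrating the It\^o excursion measure $\Nn$ with respect to the lifetime $\zeta$ and then applying the Markov property of $X$ at time $a$. Since the definition of the continuous height process given in Section~\ref{sec:lev} makes $(H_t)_{0\le t\le a}$ measurable with respect to $\sigma(X_u,\,u\le a)$---the quantity $I^s_t$ appearing in the defining limit only involves $X|_{[s,t]}\subset X|_{[0,a]}$ when $t\le a$---it suffices to establish the identity for functionals $F$ of $X|_{[0,a]}$ alone, and the extension to functionals of the pair $(X,H)|_{[0,a]}$ is automatic.

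First I would apply the Markov property of $X$ under $\Nn$ at time $a$: conditionally on $\mathcal{F}_a$ and on $\{a<\zeta\}$, the shifted process $(X_{a+t})_{t\ge 0}$ is distributed as $X$ under $\P_{X_a}$ killed at its first hitting time $T_0$ of $0$. Writing $\rho(u)$ for the Lebesgue density of $\zeta$ under $\Nn$ and $g_s(x)$ for the density at time $s$ of $T_0$ under $\P_x$, this yields
\[
\Nn\bigl(F(X|_{[0,a]})\,\mathbf{1}_{\zeta\in du}\bigr)
\;=\;\Nn\bigl(F(X|_{[0,a]})\,\mathbf{1}_{a<\zeta}\,g_{u-a}(X_a)\bigr)\,du,\qquad u>a.
\]
The key analytic input is the classical identity $g_s(x)=\frac{x}{s}p_s(-x)=q_s(x)$, valid for any spectrally positive L\'evy process started from $x>0$. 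This is a continuous analogue of the cyclic lemma used to derive~\eqref{eq:kemperman} and can be obtained, for instance, by time-reversal of $X$ at $T_0$ or from the fact that $(T_0)_{x>0}$ is a stable subordinator with explicit Laplace exponent.

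Given this, the two conditional expectations become
\[
\Nn(F\mid\zeta=1)=\frac{1}{\rho(1)}\,\Nn\bigl(F\,\mathbf{1}_{a<\zeta}\,q_{1-a}(X_a)\bigr),
\]
\[
\Nn(F\mid\zeta>1)=\frac{1}{\Nn(\zeta>1)}\,\Nn\Bigl(F\,\mathbf{1}_{a<\zeta}\int_{1-a}^{\infty}q_s(X_a)\,ds\Bigr),
\]
the second being obtained by integrating the disintegration formula in $u\in(1,\infty)$. Dividing, the Radon--Nikodym derivative of $\Nn(\cdot\mid\zeta=1)$ with respect to $\Nn(\cdot\mid\zeta>1)$, restricted to $\mathcal{F}_a\cap\{a<\zeta\}$, equals $(\Nn(\zeta>1)/\rho(1))\cdot q_{1-a}(X_a)/\!\int_{1-a}^\infty q_s(X_a)\,ds$.

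The final step is to identify $\Nn(\zeta>1)/\rho(1)$ with $\theta$. This is where the stability index enters: the scaling relation $(c^{-1/\theta}X_{ct})\stackrel{d}{=}X$ transfers to $\Nn$ and forces $\Nn(\zeta>t)=Ct^{-1/\theta}$ for some $C>0$, whence $\rho(t)=(C/\theta)t^{-1-1/\theta}$ and the ratio at $t=1$ equals $\theta$ on the nose. The step I expect to require the most care is not any single calculation but rather the rigorous handling of the Markov-property disintegration under the $\sigma$-finite measure $\Nn$ (as opposed to a probability measure), together with the proof of the first-passage density formula $g_s(x)=q_s(x)$; both are standard in the Duquesne--Le Gall framework but deserve explicit verification.
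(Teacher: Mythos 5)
Your argument is correct: the disintegration of $\Nn$ over the lifetime via the Markov property at time $a$, the first-passage density $g_s(x)=\tfrac{x}{s}p_s(-x)=q_s(x)$ for the spectrally positive process, and the scaling identity $\Nn(\z>t)=Ct^{-1/\theta}$ (which yields $\Nn(\z>1)/\rho(1)=\theta$) are exactly the ingredients of the standard proof. The paper itself does not prove this proposition but defers to \cite[Proposition 2.3]{Kautre}, whose argument follows the same route; the only point worth making explicit in a self-contained write-up is that the conditional law $\Nn(\,\cdot\,|\,\z=1)$ obtained from your disintegration coincides with the paper's definition of the normalized excursion law via the rescaling operators $S^{(1/\z)}$, which again follows from the scaling property of $X$ under $\Nn$.
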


We now control the Radon-Nikodym
density appearing in Proposition \ref {prop:abscon}. Recall that $p_s$ stands for the density of $X_s$. It is well known that $p_1$ is bounded over $\R$ and that the derivative of $q_u$ is bounded over $\R$ for every $u>0$  (see e.g. \cite[I. 4]{Zolotarev}).

\begin{lem}\label{lem:technical} Fix $\a>0$. We have:
$$(i) \lim_{n \rightarrow \infty }   \sup_{1 \leq j \leq \a B_n} \left|
\psi^*_n(j)- \int_1 ^\infty ds \, q_s\left(\frac{j}{
 B_{n/\mu_0} }\right) \right|=0, \quad (ii) \lim_{n \rightarrow \infty } \sup_{1 \leq j \leq \a B_n} \left| n \psi_n(j)-
q_1\left(\frac{j}{B_{n/\mu_0}}\right) \right|=0.$$\end{lem}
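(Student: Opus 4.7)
The plan is to establish (ii) first by a local limit theorem argument starting from Proposition \ref{prop:K}, then to deduce (i) from (ii) via a Riemann sum and scaling argument. For (ii), the starting point is the identity
$$\psi_n(j) = \sum_{p \geq n} \frac{j}{p}\, \P[S_p=n]\, \P[W'_{p-n}=n-j].$$
The concentration argument of Lemma \ref{lem:GdesDev2} (ii) involves only $S_p$ and the jumps of $W$; since we restrict to $j \leq \alpha B_n = o(n)$, exactly the same estimate shows that the contribution from $p$ outside the window $\{|p-n/\mu_0|\leq n^{3/4}\}$ is $oe_{1/2}(n)$, uniformly in $j$.

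On the remaining window I substitute $p=\lfloor n/\mu_0 + u\sqrt{n}\rfloor$ and approximate the sum by a Riemann integral in $u$, exactly as in the computation leading to \eqref{eq:somint}. Theorem \ref{thm:strongLL} applied to the Bernoulli$(\mu_0)$ sum yields $\sqrt{n}\,\P[S_p=n]\to(2\pi(1-\mu_0))^{-1/2}\exp(-\mu_0 u^2/(2(1-\mu_0)))$ together with the $C\min(1,1/u^2)$ bound used in that proof, while Theorem \ref{thm:locallimit} (iii) applied to $W'$ gives
$$B'_{p-n}\,\P[W'_{p-n}=n-j] = p_1\!\left(\frac{n-j - \tfrac{\mu_0}{1-\mu_0}(p-n)}{B'_{p-n}}\right) + o(1),$$
and after substitution the argument of $p_1$ becomes a function of $u$ and of $j/B_{n/\mu_0}$ alone. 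Using Lemma \ref{lem:hhp} (when $\sigma^2=\infty$) or the explicit value $B'_n=\sigma'\sqrt{n/2}$ (when $\sigma^2<\infty$) to re-express $B'_{p-n}$ in terms of $B_{n/\mu_0}$, and then applying dominated convergence with the same dominating functions $f_n, g_n$ as in the proof of Theorem \ref{thm:ltn}, the integration in $u$ produces exactly $q_1(j/B_{n/\mu_0})/n$, whence (ii) pointwise in $x := j/B_{n/\mu_0}$. Uniformity in $j\in[1,\alpha B_n]$ follows from the uniformity built into the local limit theorem statements together with the equicontinuity on compacts granted by the boundedness of the derivative of $q_s$ recalled in the text.

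For (i), I write $\psi_n^*(j) = \sum_{m \geq n} \psi_m(j)$ and use (ii) to replace each summand by $q_1(j/B_{m/\mu_0})/m$ up to an error. After the change of variable $m=ns$, the Riemann sum $\sum_{m \geq n}\frac{1}{m} q_1(j/B_{m/\mu_0})$ converges to $\int_1^\infty \frac{ds}{s}\, q_1(j/B_{ns/\mu_0})$. The scaling relation \eqref{eq:scaling} gives $q_s(x)=s^{-1}q_1(x\,s^{-1/\theta})$, while $B_{ns/\mu_0}\sim s^{1/\theta}B_{n/\mu_0}$ by the slowly varying representation (Proposition \ref{prop:slow}); combining these rewrites the integral as $\int_1^\infty q_s(j/B_{n/\mu_0})\,ds$, which is the desired limit.

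The main obstacle throughout is the uniformity over $j\in[1,\alpha B_n]$; in particular in the infinite variance case one must compare $B'_{p-n}$ with $B_{n/\mu_0}$ via the slowly varying functions $h$ and $h'$, and control the error in passing from Riemann sums to integrals in (i) uniformly in $j$. Neither step requires ideas beyond those already deployed in the proof of Theorem \ref{thm:ltn}.
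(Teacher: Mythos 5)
Your treatment of (ii) follows the paper's own argument almost step for step: restriction to the window $|p-n/\mu_0|\leq \epsilon n$ via Lemma \ref{lem:GdesDev2} (ii), conversion of the sum over $p$ into an integral in $u$, the local limit theorems for $S_p$ and $W'$, dominated convergence with the same dominating bounds, and the identification of the limiting integral. Note that in the finite-variance case this last identification is not automatic: the argument of $p_1$ retains a $u$-dependent term, and one must carry out the explicit Gaussian convolution (the $\int e^{-\a u^2}e^{-\b(\gamma+\delta u)^2}du$ identity) together with the relation between $\sigma'^2$ and $\sigma^2$ before the answer collapses to $q_1(j/B_{n/\mu_0})/n$; your phrase ``produces exactly $q_1(j/B_{n/\mu_0})/n$'' is hiding that computation, but it is the same one the paper performs.

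The deduction of (i) from (ii), however, has a genuine gap. Part (ii) gives $\psi_m(j)=\frac{1}{m}q_1(j/B_{m/\mu_0})+\frac{1}{m}\epsilon_m(j)$ with $\epsilon_m:=\sup_{1\leq j\leq \a B_m}|\epsilon_m(j)|\to 0$, and summing over $m\geq n$ bounds the total error only by $\sum_{m\geq n}\epsilon_m/m$. Since $\sum 1/m$ diverges, $\epsilon_m\to 0$ does not make this tail small (take $\epsilon_m=1/\log\log m$), so the step ``replace each summand up to an error'' fails as written. To repair it you must truncate: on $n\leq m\leq Kn$ the accumulated error is at most $(\sup_{m\geq n}\epsilon_m)\log K\to 0$ for fixed $K$, and for $m>Kn$ you need a separate crude bound showing that both $\P_{\mu,j}[\lambda(\mathbf{f})>Kn]$ and $\sum_{m>Kn}m^{-1}q_1(j/B_{m/\mu_0})$ are $O(K^{-1/\theta})$ uniformly in $j\leq\a B_n$ --- the first, say, via $\lambda(\mathbf{f})\leq\zeta(\mathbf{f})$ and $\P[\z_j=p]=\frac{j}{p}\P[W_p=-j]\leq Cj/(pB_p)$. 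A similar (milder) issue affects the identification of $\sum_{m\geq n}m^{-1}q_1(j/B_{m/\mu_0})$ with $\int_1^\infty q_s(j/B_{n/\mu_0})\,ds$: the replacement $B_{ns/\mu_0}\sim s^{1/\theta}B_{n/\mu_0}$ is not uniform over $s\in[1,\infty)$, and Proposition \ref{prop:slow} (ii) only controls the ratio up to factors $s^{\pm\epsilon}$, so the tail of the integral must again be handled separately. The paper sidesteps all of this by proving (i) independently of (ii): it combines the analogous uniform estimate for $\P_{\mu,j}[\zeta(\mathbf{f})\geq n]$ (Lemma \ref{lem:technical2}, imported from \cite{Kautre}) with the comparison Lemma \ref{lem:g} between the events $\{\zeta(\mathbf{f})\geq n\}$ and $\{\lambda(\mathbf{f})\geq\mu_0 n-n^{3/4}\}$, and then replaces $n$ by $r_n$. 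Your route for (i) is salvageable, but it is not complete as written.
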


The proof of Lemma \ref{lem:technical} is technical and is postponed to Section \ref{sec:technic}.

\begin{cor} \label{cor:tejhnique}Let $r_n$ be a sequence of positive integers such that $n/r_n \rightarrow \mu_0$ as $n \rightarrow \infty$. \begin{enumerate}

\item[(i)]We have $ \d \lim_{n \rightarrow \infty }  \sup_{1 \leq j \leq \a B_n} \left|
\psi^*_{n- \fl {a \mu_0 r_n }}(j)- \int_{1-a} ^\infty ds \,
q_s\left(\frac{j}{ B_{n/\mu_0} }\right) \right|=0$.
\item[(ii)]We have $ \d \lim_{n \rightarrow \infty }  \sup_{1 \leq j \leq \a B_n} \left| n \psi_{n- \fl {a \mu_0
r_n}}(j)- q_{1-a} \left(\frac{j}{B_{n/\mu_0}}\right)
\right|=0$.
\end{enumerate}\end{cor}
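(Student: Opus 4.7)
The plan is a two-step reduction. First, set $m_n = n - \fl{a \mu_0 r_n}$; since $r_n/n \to 1/\mu_0$, one has $m_n/n \to 1-a$. Because $B_u = u^{1/\theta} h(u)$ with $h$ slowly varying (see the discussion before Lemma~\ref{lem:locallimit}), Proposition~\ref{prop:slow} yields $B_{m_n}/B_n \to (1-a)^{1/\theta}$, and in particular $\lambda_n := (B_{m_n/\mu_0}/B_{n/\mu_0})^{\theta} \to 1-a$. Fix $\alpha' > \alpha (1-a)^{-1/\theta}$; for $n$ large we have $\alpha B_n \leq \alpha' B_{m_n}$, so applying Lemma~\ref{lem:technical} at index $m_n$ with constant $\alpha'$ yields, as $n \to \infty$,
$$\sup_{1 \leq j \leq \alpha B_n} \Big| \psi^*_{m_n}(j) - \int_1^\infty q_s\bigl(\tfrac{j}{B_{m_n/\mu_0}}\bigr)\, ds \Big| \to 0, \qquad \sup_{1 \leq j \leq \alpha B_n} \Big| m_n \psi_{m_n}(j) - q_1\bigl(\tfrac{j}{B_{m_n/\mu_0}}\bigr) \Big| \to 0.$$

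The second step converts $B_{m_n/\mu_0}$ to $B_{n/\mu_0}$ via the scaling property~(\ref{eq:scaling}). A direct computation from the identity $q_u(x) = (x/u)\, p_u(-x)$ gives $q_s(y \lambda^{-1/\theta}) = \lambda\, q_{\lambda s}(y)$ for $\lambda, s > 0$ and $y \geq 0$. Setting $y = j/B_{n/\mu_0}$ and $\lambda = \lambda_n$, one has $j/B_{m_n/\mu_0} = y \lambda_n^{-1/\theta}$, hence after the change of variable $u = \lambda_n s$ in the integral,
$$\int_1^\infty q_s\bigl(\tfrac{j}{B_{m_n/\mu_0}}\bigr)\, ds = \int_{\lambda_n}^\infty q_u\bigl(\tfrac{j}{B_{n/\mu_0}}\bigr)\, du, \qquad q_1\bigl(\tfrac{j}{B_{m_n/\mu_0}}\bigr) = \lambda_n\, q_{\lambda_n}\bigl(\tfrac{j}{B_{n/\mu_0}}\bigr).$$

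To pass $\lambda_n \to 1-a$ inside, I would use the explicit formula $q_u(y) = y\, u^{-1-1/\theta} p_1(-y u^{-1/\theta})$, which together with the boundedness of $p_1$ (recalled in Section~\ref{sec:LLT}) yields $q_u(y) \leq C y\, u^{-1-1/\theta}$. This bound is integrable in $u$ on $[(1-a)/2, \infty)$ uniformly for $y$ in a compact subset of $\R_+$, so $\int_{\lambda_n}^{1-a} q_u(y)\, du \to 0$ uniformly in $y \in [0, \alpha']$, proving~(i). For~(ii), write $n \psi_{m_n}(j) = (n/m_n)\, m_n \psi_{m_n}(j)$, with $n/m_n \to 1/(1-a)$; joint continuity of $(u,y) \mapsto q_u(y)$ combined with the uniform bound above gives $\lambda_n q_{\lambda_n}(y) \to (1-a)\, q_{1-a}(y)$ uniformly on $y \in [0, \alpha']$, so the factors $1/(1-a)$ and $(1-a)$ cancel and one obtains the limit $q_{1-a}(j/B_{n/\mu_0})$.

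The main delicate point is this exact cancellation of scaling factors in~(ii): the $\lambda_n$ coming from the scaling identity for $q$ must align precisely with the ratio $n/m_n \to 1/(1-a)$ to give the clean statement. Everything else is routine bookkeeping once Lemma~\ref{lem:technical} and the scaling identity for $q_u$ are in hand.
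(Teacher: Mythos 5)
Your proof is correct and follows essentially the same route as the paper: apply Lemma \ref{lem:technical} at the shifted index $m_n=n-\fl{a\mu_0 r_n}$, then use the scaling property \eqref{eq:scaling} of $q$ together with $B_{m_n/\mu_0}/B_{n/\mu_0}\to(1-a)^{1/\theta}$ and a continuity/integrability estimate to replace $B_{m_n/\mu_0}$ by $(1-a)^{1/\theta}B_{n/\mu_0}$ (the paper absorbs the scaling into the spatial argument and uses $\int_1^\infty q_s(x)\,ds=\theta\int_0^x p_1(-u)\,du$, whereas you absorb it into the time index, a cosmetic difference). You are in fact slightly more careful than the paper on the range of the supremum (introducing $\alpha'$ so that $[1,\alpha B_n]\subset[1,\alpha' B_{m_n}]$) and you write out part (ii), which the paper leaves to the reader.
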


\begin{proof}We shall only prove (i). The second assertion is easier and is left to the reader.
By Lemma \ref{lem:technical} (i):
$$\sup_{1 \leq j \leq \a B_n} \left|
\psi^*_ {n- \fl {a \mu_0 r_n }}(j)- \int_1 ^\infty ds \, q_s\left(\frac{j}{
 B_ {(n- \fl {a \mu_0 r_n })/\mu_0} }\right) \right|=0.$$ 
By (\ref{eq:scaling}) and the definition of $q_s(x)$: $$\int_{1-a} ^\infty ds \,
q_s\left(\frac{j}{
 B_{n/\mu_0} } \right)=\int_1 ^\infty ds
\, q_s\left(\frac{j}{
 (1-a)^{1/\theta}B_{ n/\mu_0} } \right).$$
To simplify notation, set
$a_1(n,j)=\frac{j}{(1-a)^{1/\theta} B_{n/\mu_0}}$ and
$a_2(n,j)=\frac{j}{B_{(n- \fl {a \mu_0 r_n})/\mu_0}}$.  It is thus sufficient to
verify that for $n$ sufficiently large:
\begin{equation}
\label{eq:amontrer}
\sup_{1 \leq j \leq \a B_n} \left| \int_1 ^\infty ds \,
(q_s(a_1(n,j))-q_s(a_2(n,j))) \right| \quad \mathop{ \longrightarrow} _ {n \rightarrow \infty} \quad 0.\end{equation}
From  (\ref{eq:scaling}), we have for $ x \geq 0$:
$$\int_1 ^\infty ds \, q_s(x)=  x \int_1 ^ \infty \frac{ds}{s} p_s(-x)= x \int_1^ \infty
\frac{ds}{s^ {1+1/ \theta}}  p_1(-x s ^ {-1/ \theta})=\theta \int_0 ^ {x} p_1(-u) \,du,$$
so that
$$\left| \int_1 ^\infty ds \,
(q_s(a_1(n,j))-q_s(a_2(n,j))) \right| = \theta
\left|\int^{a_2(n,j)} _ {a_1(n,j)} p_1(-u) \,du
\right| \leq  \theta M'|a_2(n,j) - a_1(n,j)|,$$ where we have used the fact that $p_1$ is bounded by a
positive real number $M'$. Thus we see that \eqref{eq:amontrer} will follow if we can verify that:
$$ \sup _ {1 \leq  j \leq \alpha B_n} |a_1(n,j)-a_2(n,j) | \quad\mathop { \longrightarrow}_ {n \rightarrow \infty} \quad 0,$$
and to this end it is enough to establish that:
$$  \left| \frac{B_n}{(1-a) ^ {1/ \theta} B_ {n/ \mu_0}} - \frac{B_n}{B_{(n- \fl{a \mu_0 r_n})/\mu_0}} \right|\quad\mathop { \longrightarrow}_ {n \rightarrow \infty} \quad 0.$$
The last convergence is however immediate from our assumption on the sequence $(r_n)$.\end{proof}

\subsection{Convergence of the scaled contour and height functions}

We now aim at proving invariance theorems under the conditional probability measure $\Pmu[\, \cdot \, | \, \lt  =
n]$.

Recall the notation $T_k( \tau)$ introduced in the beginning of this section. For $u \geq 0$, set $T_u( \tau)=T_ { \fl {u}}( \tau)$.

\begin{lem}\label{lem:CVf}Fix $a \in (0,1)$ and $\alpha < \min( a/2, (1-a)/2)$.
\begin{enumerate}
\item[(i)] We have
$ \d\lim_{n \rightarrow \infty} \Prmu{ \left.
  \sup_{ b \in (a-\a,a+\a)} \left|\frac{ T_{\mu_0 b n} (\tau)}{n}-b\right|> \frac{1}{n^{1/4}} \,  \right| \, \zt \geq n} = 0.$
\item[(ii)]  We have
$ \d \lim_{n \rightarrow \infty} \Prmu{ \left.
  \sup_{ b \in (a-\a,a+\a)} \left|\frac{ T_{b n} (\tau)}{n}-\frac{b}{\mu_0}\right|> \frac{1}{n^{1/4}}  \,  \right| \, \lt=n } = 0.$
\end{enumerate}
\end{lem}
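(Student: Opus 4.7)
The key algebraic observation is that $T_k(\tau)$ is the functional inverse of $\Lambda_\tau$, namely $T_k(\tau)=\min\{j\geq 0:\Lambda_\tau(j)\geq k\}$. Hence if, for all $s$ in some range, $|\Lambda_\tau(s)/s-\mu_0|\leq \delta/n^{1/4}$, then sandwiching gives $T_k(\tau)=k/\mu_0\cdot(1+O(1/n^{1/4}))$ for all $k$ with $k/\mu_0$ in the corresponding range, with an implicit constant that can be made smaller than any prescribed threshold by choosing $\delta$ small. Both (i) and (ii) will therefore follow from the concentration results for $\Lambda_\tau$ (Corollary \ref{cor:GD2mieux} and Corollary \ref{cor:Lambda}), modulo controlling the range of $s$ to which these corollaries apply, which forces control of $\z(\tau)$.

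For (ii) the situation is easier. Under $\Pmu[\,\cdot\,|\,\lt=n]$, Lemma \ref{lem:GdesDev2}(ii) combined with the asymptotic $\Pmu[\lt=n]\asymp n^{-1/\theta-1}/h(n)$ from Theorem \ref{thm:ltn} shows that $|\zt - n/\mu_0|>\zt^{3/4}$ has probability $o(1)$. On this good event, $\zt \sim n/\mu_0$. Combined with Corollary \ref{cor:Lambda} applied with $\eta < \mu_0 (a-\alpha)/2$, one gets uniform concentration of $\Lambda_\tau(s)/s$ for $s\in[\eta\zt,\zt]$, a range that contains all the values $bn/\mu_0\pm n^{3/4}$ with $b\in(a-\alpha,a+\alpha)$. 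The algebraic inversion argument then yields $|T_{bn}(\tau)/n - b/\mu_0|<1/n^{1/4}$ uniformly in $b$, for $n$ large and an appropriate choice of $\delta$.

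For (i) the difficulty is that under $\Pmu[\,\cdot\,|\,\zt\geq n]$, $\zt$ is not concentrated—it can be arbitrarily large. I will introduce an extra parameter $K>0$ and split
\[
\Prmu{A_n \,|\, \zt\geq n}\leq \Prmu{A_n,\,\zt\leq Kn \,|\, \zt\geq n}+\Prmu{\zt>Kn \,|\, \zt\geq n},
\]
where $A_n$ denotes the event appearing in the lemma. On the event $\{\zt\leq Kn\}$ we apply Corollary \ref{cor:GD2mieux} with $\eta:=(a-\alpha)/(2K)$: then for every $s\in[(a-\alpha)n,(a+\alpha)n]$ we have $s\geq \eta\zt$, so the corollary yields $|\Lambda_\tau(s)/s - \mu_0|<\delta/n^{1/4}$ except on an event of probability $oe_{1/2}(n)$. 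After dividing by $\Pmu[\zt\geq n]\geq c\,n^{-1/\theta-\epsilon}$ (Lemma \ref{lem:locallimit}), this contribution vanishes. For the remaining term, Lemma \ref{lem:locallimit} gives
\[
\limsup_{n\rightarrow\infty}\Prmu{\zt>Kn \,|\, \zt\geq n}=K^{-1/\theta},
\]
which can be made arbitrarily small by choosing $K$ large. Letting $n\to\infty$ and then $K\to\infty$ completes the argument.

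The main obstacle is therefore the uniformity: one must check that the deterministic inversion estimate passes uniformly over $b\in(a-\alpha,a+\alpha)$, which is immediate since $\Lambda_\tau$ is monotone and the bound on $|\Lambda_\tau(s)/s-\mu_0|$ is uniform in $s$. The secondary technical point—controlling the size of $\zt$ under the two conditionings—is handled by the tail estimate for $\zt$ in the $\zt\geq n$ case and by the concentration Lemma \ref{lem:GdesDev2}(ii) combined with Theorem \ref{thm:ltn} in the $\lt=n$ case.
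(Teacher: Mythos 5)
Your proof is correct and follows exactly the route the paper intends: the paper's own proof of Lemma \ref{lem:CVf} simply states that both assertions are easy consequences of Corollaries \ref{cor:GD2mieux} and \ref{cor:Lambda} and leaves the details to the reader. You have supplied those details correctly, including the two points that genuinely require care, namely the inversion of the concentration of $\Lambda_\tau$ into concentration of $T_k$, and the control of $\zt$ under each conditioning (truncation at $Kn$ plus the tail estimate of Lemma \ref{lem:locallimit} for (i), and Lemma \ref{lem:GdesDev2} (ii) combined with Theorem \ref{thm:ltn} for (ii)).
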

\begin{proof}
Both assertions are easy consequences of Corollaries \ref{cor:GD2mieux} and \ref{cor:Lambda}. Details are left to the reader.
\end{proof}

\begin{lem}\label{lem:CVproba}Let $d$ be a positive integer. Fix $a \in (0,1)$ and consider a sequence $(Z^n)_ {n \geq 1}$ of càdlàg processes with values in $ \R^d$.
Let also $(K_n)_{n \geq 1}$ and $(S_n)_{n \geq 1}$ be two 
sequences of positive random variables converging in probability towards
$1$. Assume that $(Z^n)_{n \geq 1}$ converges in distribution in
$\D([0,\infty),\R^d)$ towards a  càdlàg process $Z$ such that
a.s. $Z$ is continuous at $a$. Then $(K_n Z^n_{S_n t};\, 0 \leq t \leq
a)$ converges in distribution in $\D([0,a],\R)$ towards $(X_t;\, 0
\leq t \leq a)$.
\end{lem}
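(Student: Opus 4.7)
The plan is to reduce the statement to an almost-sure convergence via the Skorokhod representation theorem and then to exhibit an explicit time change witnessing the Skorokhod convergence.

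Since $S_n \to 1$ and $K_n \to 1$ in probability, the triple $(Z^n, S_n, K_n)$ converges jointly in distribution in $\D([0,\infty), \R^d) \times \R \times \R$ to $(Z, 1, 1)$. By the Skorokhod representation theorem, I may assume that this joint convergence is almost sure. It then suffices to prove the pathwise statement: for almost every realization, $(K_n Z^n_{S_n t})_{0 \leq t \leq a}$ converges to $(Z_t)_{0 \leq t \leq a}$ in $\D([0,a], \R^d)$; convergence in distribution then follows.

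Fix a realization in the full-measure event where $Z^n \to Z$ in $\D([0,\infty), \R^d)$, where $S_n, K_n \to 1$, and where $Z$ is continuous at $a$. I pick a continuity point $a' > a$ of $Z$ (possible because $Z$ has at most countably many discontinuities). The convergence in $\D([0,\infty), \R^d)$ then yields convergence in $\D([0,a'], \R^d)$, so there exist increasing bijections $\lambda_n: [0,a'] \to [0,a']$ with
$$\sup_{s \in [0,a']} |\lambda_n(s) - s| \to 0, \qquad \sup_{s \in [0,a']} |Z^n(\lambda_n(s)) - Z(s)| \to 0.$$
For $n$ large enough, $S_n a < a'$, and I can define $\rho_n(t) = \lambda_n^{-1}(S_n t)$ for $t \in [0,a]$. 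The map $\rho_n$ is increasing and continuous, sends $[0,a]$ into $[0,a']$ with $\rho_n(a) \to a$, and satisfies $\sup_{t \in [0,a]}|\rho_n(t) - t| \to 0$. Setting $\sigma_n = \rho_n^{-1}$ and exploiting the identity $S_n \sigma_n(t) = \lambda_n(t)$ (valid for $t$ in the image of $\rho_n$), I obtain
$$\sup_{t \in [0, \rho_n(a)]} |Z^n(S_n \sigma_n(t)) - Z(t)| \to 0, \qquad \sup_{t \in [0, \rho_n(a)]} |\sigma_n(t) - t| \to 0.$$

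A standard linear reparametrization turns $\sigma_n$ into a bijection of $[0,a]$ onto itself, producing a boundary error that vanishes thanks to the continuity of $Z$ at $a$; this gives the desired Skorokhod convergence of $(Z^n_{S_n t})_{0 \leq t \leq a}$ to $(Z_t)_{0 \leq t \leq a}$ in $\D([0,a], \R^d)$. Finally, since $K_n \to 1$, multiplying by the scalar $K_n$ preserves Skorokhod convergence, yielding the claim. The main technical point, and the only place where the assumption that $Z$ is continuous at $a$ is invoked, is the boundary adjustment of the time change near $t = a$.
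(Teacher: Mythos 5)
Your proof is correct and follows essentially the same route as the paper: reduce to almost-sure convergence via the Skorokhod representation theorem and then conclude by the standard deterministic properties of the Skorokhod topology, which the paper simply delegates to \cite[VI. Theorem 1.14]{Shir} while you write them out. One small caution on the part you do spell out: the final reparametrization turning $\sigma_n$ into a bijection of $[0,a]$ must be performed only on a small neighbourhood of the endpoint $a$ (gluing $\sigma_n$ to a linear piece near $a$), since a global linear rescaling of the time variable would produce errors at every discontinuity of $Z$ in $(0,a)$, not just a boundary error controlled by continuity at $a$.
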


\begin{proof} By the Skorokhod Representation Theorem
(see e.g. \cite[Theorem 6.7]{Bill}), we can assume that $(X^n)_{n
\geq 1}$ converges almost surely in $\D([0,\infty),\R^d)$ towards
$(X_t; t \geq 0)$ and that both $(K_n)_{n \geq 1}$ and $(S_n)_{n
\geq 1}$ converge almost surely towards $1$. The conclusion follows by standard properties of the
Skorokhod topology (see e.g. \cite[VI. Theorem 1.14]{Shir}).
\end{proof}

\begin{lem}\label{lem:absolue}For $n \geq 1$, let ${r_n}$  be the greatest positive integer such that $ \lceil \mu_0 {r_n}-r_n^{3/4} \rceil=n$. Fix $a \in (0,1)$. Let $\t_n$ be a
random tree distributed according to $\Pmu[\, \cdot \, | \, \lt  =
n]$. Then the law of
$$\left(\frac{1}{B_{{r_n}}} \W_{ \fl{T_ {a \mu_0 {{r_n}}}(\t_n) \frac{t}{a}}}(\t_n) ,
\frac{B_{{r_n}}}{{r_n}} H_{ T_ {a \mu_0 {{r_n}}}(\t_n)
\frac{t}{a}}(\t_n)\right)_{0 \leq t \leq a}$$ converges to the law of $( X_t,H_t)_{0 \leq t \leq a }$ under $\Nn( \, \cdot
\, | \, \z=1)$.
\end{lem}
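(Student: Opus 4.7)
The strategy is to use the absolute continuity formula of Proposition~\ref{prop:abscon} to transfer an invariance principle established under the conditional law $\Pmu[\cdot\,|\,\lt \geq n]$ to $\Pmu[\cdot\,|\,\lt = n]$, with a Radon--Nikodym density that converges to $\Gamma_a(X_a)$, so that Proposition~\ref{prop:Gamma} identifies the limit as $\Nn(\cdot\,|\,\zeta = 1)$. As a first step, combining Theorem~\ref{thm:cvW} with Proposition~\ref{prop:passage}(ii) (using that $r_n/n \to 1/\mu_0$ and that $B_{r_n} \sim B_{n/\mu_0}$ by regular variation of $(B_n)$), I obtain that under $\Pmu[\cdot\,|\,\lt \geq n]$,
\[
\left(\frac{1}{B_{r_n}} \W_{\fl{r_n t}}(\t_n),\ \frac{B_{r_n}}{r_n} H_{r_n t}(\t_n)\right)_{t \geq 0}
\ \mathop{\longrightarrow}^{(d)}_{n \rightarrow \infty}\ (X_t, H_t)_{t \geq 0} \text{ under } \Nn(\cdot\,|\,\zeta > 1).
\]

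Next, Lemma~\ref{lem:CVf}(i), transferred from $\{\zt \geq r_n\}$ to $\{\lt \geq n\}$ via Proposition~\ref{prop:passage}(ii), yields $T_{a \mu_0 r_n}(\t_n)/(a r_n) \to 1$ in $\Pmu[\cdot\,|\,\lt \geq n]$-probability. Since $X$ is a.s.\ continuous at the deterministic time $a$, Lemma~\ref{lem:CVproba} allows me to substitute $r_n t$ by $T_{a \mu_0 r_n}(\t_n) t/a$ in the display above, giving the desired convergence on $[0,a]$ but still under the enlarged conditioning $\lt \geq n$ and with the limit taken under $\Nn(\cdot\,|\,\zeta > 1)$ instead of $\Nn(\cdot\,|\,\zeta = 1)$. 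Setting $k_n = \fl{a \mu_0 r_n}$ and applying Proposition~\ref{prop:abscon}, for any bounded continuous $f$ on $\D([0,a],\R^2)$,
\[
\Esmu{f(\widetilde{\W}_n, \widetilde{H}_n)\,|\,\lt = n} \,=\, \Esmu{f(\widetilde{\W}_n, \widetilde{H}_n)\, D_n\!\left(\W_{T_{k_n}}(\t_n)\right)\,|\,\lt \geq n},
\]
where $(\widetilde{\W}_n, \widetilde{H}_n)$ denotes the rescaled pair from the statement and
\[
D_n(j) \,=\, \frac{n\,\psi_{n - k_n}(j)}{\psi^*_{n - k_n}(j)} \cdot \frac{\psi^*_n(1)}{n\,\psi_n(1)}.
\]

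The key analytic point is that $D_n(\W_{T_{k_n}}(\t_n))$ converges jointly with the rescaled paths to $\Gamma_a(X_a)$. From Theorem~\ref{thm:ltn}, together with the easily derived tail estimate $\psi^*_n(1) \sim \theta \mu_0^{1/\theta} p_1(0)/(h(n) n^{1/\theta})$, one obtains $\psi^*_n(1)/(n\,\psi_n(1)) \to \theta$. Corollary~\ref{cor:tejhnique} supplies the uniform convergence $n \psi_{n-k_n}(j) \to q_{1-a}(j/B_{n/\mu_0})$ and $\psi^*_{n-k_n}(j) \to \int_{1-a}^\infty q_s(j/B_{n/\mu_0})\,ds$ for $j \leq \alpha B_n$. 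Since step two gives $\W_{T_{k_n}}(\t_n)/B_{r_n} \to X_a$ in distribution and $B_{r_n} \sim B_{n/\mu_0}$, one concludes $D_n(\W_{T_{k_n}}(\t_n)) \to \theta q_{1-a}(X_a)/\int_{1-a}^\infty q_s(X_a)\,ds = \Gamma_a(X_a)$. Passing to the limit in the absolute continuity identity and invoking Proposition~\ref{prop:Gamma} then yields the lemma.

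The main obstacle I expect is justifying the interchange of limit and expectation in the last step, since Corollary~\ref{cor:tejhnique} only provides a uniform estimate for $D_n(j)$ when $j \leq \alpha B_{r_n}$; on the complementary event one has no good pointwise control of the Radon--Nikodym density. The natural remedy is to truncate by inserting the indicator $\mathbf{1}_{\{\W_{T_{k_n}}(\t_n) \leq M B_{r_n}\}}$ inside the expectation, use the uniform bound on $D_n$ on that event together with Skorokhod representation and dominated convergence to pass to the limit for each fixed $M$, and then let $M \to \infty$, exploiting the tightness of $\W_{T_{k_n}}(\t_n)/B_{r_n}$ (inherited from its weak convergence to $X_a$) to control the residual error uniformly in $n$.
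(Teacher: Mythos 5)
Your outline coincides with the paper's proof: the same chain (Theorem~\ref{thm:cvW} combined with Proposition~\ref{prop:passage}~(ii), then Lemmas~\ref{lem:CVf} and~\ref{lem:CVproba} to handle the random time change $T_{a\mu_0 r_n}(\tau)/(a r_n)\to 1$) gives the invariance principle under $\Pmulgeqn$; then Proposition~\ref{prop:abscon} with the density estimates of Theorem~\ref{thm:ltn} and Corollary~\ref{cor:tejhnique}, and finally Proposition~\ref{prop:Gamma} to identify the limit as $\Nn(\,\cdot\,|\,\zeta=1)$. The one place where your argument as written would not go through is the truncation step at the end, and there are two distinct problems there.

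First, the truncation must be two-sided. Corollary~\ref{cor:tejhnique} gives uniform convergence of the numerator $n\psi_{n-k_n}(j)$ and of the denominator $\psi^*_{n-k_n}(j)$ \emph{separately}; to deduce uniform convergence of the ratio you need the denominator bounded away from $0$, and since $\int_{1-a}^{\infty}q_s(x)\,ds=\theta\int_0^{(1-a)^{-1/\theta}x}p_1(-u)\,du$ vanishes as $x\to 0$, this only holds on $\frac{1}{\alpha}B_n\le j\le \alpha B_n$. This is exactly why the paper works on the event $A_n^\alpha=\{\frac{1}{\alpha}B_{n/\mu_0}<\W_{T_{a\mu_0 r_n}(\tau)}(\tau)<\alpha B_{n/\mu_0}\}$; the lower cut-off costs nothing in the limit because $X_a>0$ a.s.\ under $\Nn(\,\cdot\,|\,\zeta>1)$. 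Second, and more seriously, your proposed way of removing the truncation fails: on $\{\W_{T_{k_n}}(\t_n)>MB_{r_n}\}$ you have no bound whatsoever on $D_n$, so tightness of $\W_{T_{k_n}}(\t_n)/B_{r_n}$ under $\Pmulgeqn$ does not control the weighted residual $\Esmu{f\,D_n\,1_{\{\W_{T_{k_n}}>MB_{r_n}\}}\,|\,\lt\ge n}$ (and the claimed ``uniform bound on $D_n$'' on the complementary event is not available for small $j$ either, by the first point). The correct fix, which is the paper's, is to bound the residual on the $\{\lt=n\}$ side: by the absolute-continuity identity itself the residual equals $\Esmu{f\,1_{(A_n^\alpha)^c}\,|\,\lt=n}\le \norme{f}\,\Pmu[(A_n^\alpha)^c\,|\,\lt=n]$, and applying the already-established convergence on the truncated event with $F\equiv 1$ gives $\Pmu[A_n^\alpha\,|\,\lt=n]\to \Nn(\tfrac{1}{\alpha}<X_a<\alpha\,|\,\zeta=1)$, which tends to $1$ as $\alpha\to\infty$. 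With these two corrections your proof is the paper's.
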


\begin{proof}We start by proving that for every $\a >1$:
\begin{equation}
\label{eq:cva}\lim_{n \rightarrow \infty } \left(  \sup_{  \frac{1}{\a} B_n \leq j \leq \a B_n} \left| \frac{\psi_{n- \fl {a
\mu_0 {r_n} }}(j)/\psi_n(1)}{\psi^*_{n- \fl{a \mu_0 {r_n}}}(j)/\psi^*_n(1)}-\Gamma_a\left(\frac{j-1}{B_
{n/\mu_0}}\right)\right|\right)=0.\end{equation}
By Theorem \ref{thm:ltn}, $\psi^*_n(1)/n\psi_n(1) \rightarrow \theta$ as
$n \rightarrow \infty$. Using Corollary \ref{cor:tejhnique}, it then
suffices to verify that there exists $ \delta>0$ such that for $n$ sufficiently large:$$ \inf_ {\frac{1}{\a} B_n \leq j \leq \a B_n}  \int_ {1-a} ^ { \infty}ds q_s \left( \frac{j-1}{B_n} \right)> \delta.$$
This follows from the fact that there exists $ \delta'>0$ such that $ \int_ {1-a} ^ { \infty}ds q_s \left(x \right)> \delta'$ for every $x \in [{1}/{\a}, \a]$
Details are left to the reader.

Fix a bounded continuous function
$F : \D([0,a],\R^2) \rightarrow \R_+$. To simplify notation, for every tree $ \tau$ with $ \lambda( \tau) \geq n$, set $W^ {(n)}( \tau)=(W^ {(n)}_t( \tau))_ {0 \leq t \leq  a}$ and $H^ {(n)} ( \tau)=(H^ {(n)}_t ( \tau))_ {0 \leq t \leq  a}$, where for $ 0 \leq  t \leq  a$:
$$W^ {(n)}_t ( \tau) =\frac{1}{B_{{r_n}}} \W_{ \fl{T_ {a \mu_0 {{r_n}}}(\tau)\frac{t}{a}}}(\tau),  \qquad H^ {(n)}_t( \tau)=\frac{B_{{r_n}}}{{r_n}} H_{ T_ {a \mu_0 {{r_n}}}(\tau)\frac{t}{a}}(\tau).$$
Then set $G ^ {(n)}( \tau)=F\left(W^ {(n)}( \tau),H^ {(n)} ( \tau)  \right)$.  Note that by \eqref{eq:Hmes}, $H^ {(n)}( \tau)$ is a measurable function of $W^ {(n)}( \tau) $.
Fix $\a >1$ and put:
$$A^\a_n(\tau)=\left\{
\frac{1}{\a} B_{n/\mu_0}< \W_{ T_{a \mu_0 {{r_n}}} ( \tau)} (\tau)<
\a B_{n/\mu_0}\right\}.$$
By combining Proposition \ref{prop:abscon} and the estimate \eqref{eq:cva}, we get:
\begin{equation}
\label{eq:cvutiliser}\lim_{n \rightarrow \infty} \left|
\Es{G ^ {(n)}\left(\t_n  \right)1_{A^\a_n(\t_n)}}  - \Esmu{ \left. G ^ {(n)}\left( \tau\right) 1_{A^\a_n(\tau)}  \Gamma_a
\left(\frac{  \W_{ T_ {a \mu_0 {r_n}}(\tau)}(\tau)}{B_{n/\mu_0}}
\right) \right| \, \lt \geq n} \right|=0.
\end{equation}

We now claim that the law of $ \left(W^ {(n)}( \tau),H^ {(n)} ( \tau)  \right)$ under $ \Pmulgeqn$ converges towards the law of $(X_t,H_t)_  {0 \leq t \leq a}$ under $\Nn(\cdot \|
\z>1)$. To establish this convergence, by Proposition
\ref{prop:passage} (ii), it is sufficient to show that the law of $$ \left(\frac{1}{B_n} \W_{ \fl{T_ {a \mu_0 {n}}(\tau) \frac{t}{a}}}(\tau),\frac{B_{n}}{n} H_{ T_{a \mu_0 {n}} ( \tau)\frac{t}{a}}(\tau) \right)_{ 0 \leq t \leq  a }$$ under $ \Pmuzgeqn$ converges towards the law of $(X_t,H_t)_  {0 \leq t \leq a}$ under $\Nn(\cdot \|
\z>1)$. Indeed, Proposition \ref {prop:passage} (ii) will then imply that the same convergence holds if we replace $ \Pr { \, \cdot \, | \zt \geq n}$ by $\Pr { \, \cdot \, | {\lt}\geq \lceil \mu_0 n -{n} ^{3/4} \rceil}$ and we just have to replace $n$ by $r_n$. By Lemma \ref{lem:CVf}, under $ \Pmuzgeqn$, $ T_{a \mu_0 {n}}(\tau)/(an)$ converges in probability towards $1$, and by Theorem \ref{thm:cvW},
the law of $\left(\frac{1}{B_n} \W_{ \fl{n t}} ( \tau), \frac{B_n}{n} H_ {nt} ( \tau) \right)_ { t \geq 0}$ converges to the law of $(X_t,H_t)_  { t \geq  0}$ under $\Nn( \, \cdot
\, | \, \z>1)$. Our claim now follows from Lemma \ref{lem:CVproba}.

From the definition of ${r_n}$, we have ${r_n}/n \rightarrow 1/ \mu_0$ as $n \rightarrow \infty$, which entails $ B_ {{r_n}} / B_ {n/ \mu_0} \rightarrow 1$. Thanks to \eqref{eq:cvutiliser} and the preceding claim, we get that:
\begin{eqnarray}
\lim_{n \rightarrow \infty} \Es{ G^ {(n)}( \t_n) 1_{A^\a_n(\t_n)}}&=& \Nn(F \left(
(X_t,H_t)_{0 \leq t \leq a}  \right) \Gamma_a(X_a) 1_{\{\frac{1}{\a} < X_a < \a \}}
\| \z
> 1) \notag\\
&=& \Nn(F \left(
(X_t,H_t)_{0 \leq t \leq a}  \right)1_{\{\frac{1}{\a} < X_a < \a \}} \| \z = 1), \label{eq:cvG}
\end{eqnarray}
where we have used Proposition \ref{prop:Gamma} in the last
equality. By taking $F\equiv 1$, we obtain: $$\lim_{\a \rightarrow
\infty }\lim_{n \rightarrow \infty} \Pr{A^\a_n(\t_n)}=1.$$ 
By choosing $ \alpha>0$ sufficiently large, we easily deduce from the convergence \eqref{eq:cvG} that:
$$\lim_{n \rightarrow \infty} \Es{G^ {(n)} ( \t_n)}=\Nn(F \left(
(X_t,H_t)_{0 \leq t \leq a}  \right)  \| \z = 1).$$
This completes the proof.\end{proof}

Recall that $C(\tau)$ stands for the contour function of the tree
$\tau$, introduced in Definition \ref{def:fonctions}.

\begin{thm}\label{thm:cvG}
For every $n\geq 1$ such that $\Pmu[ \lt = n]>0$, let $\t_n$ be a
random tree distributed according to $\Pmu[\, \cdot \, | \, \lt  =
n]$. Then the following convergences hold.
\begin{enumerate}
\item[(i)]Fix $a \in (0,1)$. We have:
\begin{equation}\label{eq:H0}\left(\frac{1}{B_{\z(\t_n)}} \W_{ \fl{\z(\t_n) t}}(\t_n);\, 0 \leq t \leq a \right)
\quad \mathop{\longrightarrow}^{(d)}_{n \rightarrow \infty} \quad
\left( X_t;\, 0 \leq t \leq a \right) \textrm{ under }\Nn( \, \cdot
\, | \, \z=1).\end{equation}
\item[(ii)] We have:
\begin{equation}\label{eq:H}\left(\frac{B_{\z(\t_n)}}{\z(\t_n)}C_{2 \z(\t_n) t}(\t_n), \frac{B_{\z(\t_n)}}{\z(\t_n)}H_{\z(\t_n)
t}(\t_n)\right)_{0 \leq t \leq 1 } \quad
\mathop{\longrightarrow}^{(d)}_{n \rightarrow \infty}\quad (
H_t,H_t)_{0 \leq t \leq 1} \textrm{ under }\Nn( \, \cdot \, | \,
\z=1).\end{equation}
\end{enumerate}
\end{thm}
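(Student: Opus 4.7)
The strategy is to use Lemma \ref{lem:absolue} as the main input: it yields the joint convergence of the rescaled Lukasiewicz path and height function under $\Pmuln$ on $[0,a]$, but with a non-standard time parametrization via $T_{a\mu_0 r_n}(\t_n)/a$. My plan is first to convert to the natural time scale, which will give (i) and simultaneously provide convergence of $H$ on $[0,a]$, then to pass to the contour function $C$ via the standard comparison, use time-reversal of $C$ to extend the convergence to all of $[0,1]$, and finally to translate back to $H$.

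For the conversion step, fix $a \in (0,1)$ and apply Lemma \ref{lem:absolue} on $[0, a']$ with some $a' \in (a, 1)$. Under $\Pmuln$, Lemma \ref{lem:GdesDev2}~(ii) gives $\zeta(\t_n)/r_n \to 1$ in probability, while Lemma \ref{lem:CVf}~(ii) gives $T_{a' \mu_0 r_n}(\t_n)/(a' r_n) \to 1$ in probability. Since $B_\cdot$ is regularly varying with positive index, the quantities $K_n := B_{r_n}/B_{\zeta(\t_n)}$ and $S_n := \zeta(\t_n) a'/T_{a'\mu_0 r_n}(\t_n)$ both converge to $1$ in probability. Lemma \ref{lem:CVproba} then transfers the convergence to
$$\left(\frac{1}{B_{\zeta(\t_n)}} \W_{\lfloor \zeta(\t_n) t \rfloor}(\t_n), \frac{B_{\zeta(\t_n)}}{\zeta(\t_n)} H_{\zeta(\t_n) t}(\t_n)\right)_{0 \leq t \leq a} \quad\mathop{\longrightarrow}^{(d)}_{n \rightarrow \infty}\quad (X_t, H_t)_{0 \leq t \leq a}$$
under $\Nn(\cdot|\z=1)$; projecting onto the first coordinate yields (i).

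The standard comparison between $C$ and $H$ (see \cite{DuquesneLG}) bounds $\sup_{t \leq b} |C_{2\zeta(\tau) t}(\tau) - H_{\lfloor \zeta(\tau) t \rfloor}(\tau)|$ uniformly by the modulus of continuity of $H$ composed with a time-change that is within $\sup_{k \leq \zeta(\tau) b + o(\zeta(\tau))} H_k(\tau)$ of the identity. After rescaling by $B_{\zeta(\t_n)}/\zeta(\t_n)$, convergence of the rescaled height function to the continuous limit $H$ on $[0,a]$ makes this discrepancy vanish in probability, so the rescaled contour $(B_{\zeta(\t_n)}/\zeta(\t_n)) C_{2\zeta(\t_n) t}(\t_n)$ also converges to $H_t$ on $[0,a]$. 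To extend this to $[0,1]$, I invoke the mirror involution $\tau \mapsto \bar\tau$ that reverses the order of children at every vertex. Since $\mu$ depends only on the number of children and $\lt$ is invariant, this involution preserves $\Pmuln$, and a direct check gives the time-reversal identity $C_t(\bar\tau) = C_{2(\zeta(\tau)-1)-t}(\tau)$. Consequently, $(B_{\zeta(\t_n)}/\zeta(\t_n)) C_{2\zeta(\t_n)(1-t)}(\t_n)$ has the same law as $(B_{\zeta(\t_n)}/\zeta(\t_n)) C_{2\zeta(\t_n) t}(\t_n)$ under $\Pmuln$. Choosing $a > 1/2$, the reversal yields convergence on $[1-a, 1]$, and since the limit $(H_t)_{0 \leq t \leq 1}$ under $\Nn(\cdot|\z=1)$ is itself invariant under $t \mapsto 1-t$, the two pieces agree on the overlap; continuity of the limit and a standard tightness argument combine them into convergence on the full interval $[0,1]$.

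Finally, the same Duquesne--Le Gall comparison, now applied globally, transfers the convergence of $C$ back into convergence of the rescaled height function on $[0,1]$, and yields the claimed joint convergence $(C, H) \to (H, H)$ of (ii). The main technical obstacle is the time-reversal step: one must rigorously verify that the mirror involution preserves the conditional distribution $\Pmuln$, that the identity $C_t(\bar\tau) = C_{2(\zeta(\tau)-1)-t}(\tau)$ holds, and that the partial convergences on $[0,a]$ and $[1-a, 1]$ glue into a single convergence in $\C([0,1], \R)$.
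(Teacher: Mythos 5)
Your proposal is correct and follows essentially the same route as the paper: part (i) via Lemma \ref{lem:absolue} combined with the convergence in probability of $K_n$ and $S_n$ to $1$ and Lemma \ref{lem:CVproba}; part (ii) via the height--contour comparison on $[0,a]$, time-reversal of the contour function (which the paper asserts as a distributional identity and you justify via the mirror involution) to extend to $[0,1]$, and then a transfer back to the height function. The minor organizational differences (working on $[0,a']$ with $a'>a$, and the explicit gluing discussion) do not change the substance of the argument.
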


\begin{rem}\label {rem:changement}It is possible to replace the scaling factors $1/B_{\z(\t_n)}$ and $B_ {\z(\t_n)}/\z(\t_n)$
by respectively $\mu_0^{1/\theta}/B_n$ and $ \mu_0 ^ {1-1/ \theta} B_n/n$ without
changing the statement of the theorem. This follows indeed from the
fact that $\z(\t_n)/n$ converges in distribution towards $1/\mu_0$
under $\Pmuln$.\end{rem}

The convergence of rescaled contour functions in (ii) implies that the tree $\t_n$, viewed as
a finite metric space for the graph distance and suitably rescaled,
converges to the $\theta$-stable tree in distribution for the
Gromov-Hausdorff distance on isometry classes of compact metric
spaces (see e.g. \cite[Section 2]{RandomTrees} for details).

The convergence \eqref{eq:H0} actually holds with $a=1$. This will be proved later in Section 6.

\begin{proof}
Recall that throughout this section we limit ourselves to the case where
$\Pmu[ \lt = n]>0$ for all $n$ sufficiently large.

We start with (i). As in Lemma \ref{lem:absolue},  let ${r_n}$  be the greatest positive integer such that $ \lceil \mu_0 {r_n}-{r_n}^{3/4} \rceil=n$ and write:
$$\frac{1}{B_{\z(\t_n)}} \W_{ \fl{\z(\t_n) t}}(\t_n)= K_n \cdot \frac{1}{B_{{r_n}}} \W_{ \fl {S_n \cdot {T_ {a \mu_0 {{r_n}}}(\t_n) \frac{t}{a}}}}(\t_n),$$
where $K_n=B_{{r_n}}/B_{\z(\t_n)}$ and $S_n=a \z(\t_n)/ T_ {a \mu_0 {{r_n}}}(\t_n)$. Recall that ${r_n}/n \rightarrow 1/ \mu_0$. By Corollary \ref{cor:Lambda},
${\z(\t_n)}/n$ converges in probability to $1/\mu_0$. On the one hand, this entails that $K_n$ converges in probability towards $1$, and on the other hand, together with Lemma \ref{lem:CVf} (ii), this entails that $S_n$ converges in probability towards $1$. The convergence \eqref{eq:H0} then follows from Lemmas \ref {lem:absolue} and \ref{lem:CVproba}.

For the second assertion, we start by observing that:
\begin{equation}\label{eq:cvH}\left(\frac{B_{\z(\t_n)}}{ \z(\t_n)
} H_{ \z(\t_n) t}(\t_n);\, 0 \leq t \leq a \right)
\quad\mathop{\longrightarrow}^{(d)}\quad \left( H_t;\, 0 \leq t \leq
a \right) \textrm{ under }\Nn( \, \cdot \, | \, \z=1).\end{equation}
This convergence follows from Lemmas \ref {lem:absolue} and \ref {lem:CVproba} by the same arguments we used to establish (i). To complete the proof we use known relations between the height process and the contour process (see e.g. \cite[Remark 3.2]{Duquesne}) to show that an analog of \eqref{eq:cvH} also holds for the contour process. For $0 \leq p <
{\z(\t_n)}$ set $b_p=2p-H_p(\t_n)$ so that $b_p$ represents the time needed by the
contour process to reach the $(p+1)$-th individual of ${\z(\t_n)}$. Also set $b_{\z(\t_n)}=2
({\z(\t_n)}-1)$. Note that $C_{b_p}=H_p$ for every $ p \in  \{0,1, \ldots, \zeta( \t_n) \}$. From this
observation and the definitions of the contour function and the height function of a tree, we easily get:
\begin{equation}\label{eq:ineg}\sup_{t \in [b_p,b_{p+1}]}
|C_t(\t_n)-H_p(\t_n)| \leq
|H_{p+1}(\t_n)-H_p(\t_n)|+1.\end{equation} for $0 \leq p <
\z(\t_n)$. Then define the random function $g_n : [0, 2 {\z(\t_n)}]
\rightarrow \N$ by setting $g_n(t)=k$ if $t \in [b_k,b_{k+1})$ and
$k<{\z(\t_n)}$, and $g_n(t)={\z(\t_n)}$ if $t \in [2({\z(\t_n)}-1),2
{\z(\t_n)}]$. If $ t < 2( \z( \t_n)-1)$, $g_n(t)$ is the largest rank of an individual that has been visited before time $t$ by the contour function, if the individuals are listed $0,1, \ldots, \z( \t_n)-1$
in lexicographical order. Finally, set
$\widetilde{g}_n(t)=g_n({\z(\t_n)} t)/{\z(\t_n)}$. Fix $ \alpha \in
(0,1)$. Then, by (\ref{eq:ineg}):
$$\sup_{t \leq \frac{b_{\lfloor \alpha \z(\t_n) \rfloor}}{{\z(\t_n)}}}\left| \frac{B_{\z(\t_n)}}{{\z(\t_n)}} C_{{\z(\t_n)} t}(\t_n)- \frac{B_{\z(\t_n)}}{{\z(\t_n)}} H_{{\z(\t_n)} \widetilde{g}_n(t)}\right|
\leq \frac{B_{\z(\t_n)}}{{\z(\t_n)}}+\frac{B_{\z(\t_n)}}{{\z(\t_n)}}
\sup_{k \leq \lfloor \alpha \z(\t_n)
\rfloor}|H_{k+1}(\t_n)-H_k(\t_n)|,$$ which converges in probability
to $0$ by (\ref{eq:cvH}) and the path continuity of $H$. On the other hand,
it follows from the definition of $g_n$ that
\begin{eqnarray*}
\sup_{t \leq
\frac{b_{\lfloor \alpha \z(\t_n) \rfloor}}{{\z(\t_n)}}}\left|
\widetilde{g}_n(t)-\frac{t}{2} \right| &\leq&  \frac{1}{ \z( \t_n)} \left(\sup_ {k \leq  \fl { \a \z( \t_n)}}
\left| k- \frac{b_k}{2}\right|+1 \right) \\
&\leq& \frac{1}{2
B_{\z(\t_n)}} \sup_{k \leq \a {\z(\t_n)}}
\frac{B_{\z(\t_n)}}{{\z(\t_n)}}
H_k(\t_n)+\frac{1}{{\z(\t_n)}}\quad\mathop{\longrightarrow}^{(\P)}\quad
0
\end{eqnarray*}
by (\ref{eq:cvH}). Finally, by the definition of $b_n$ and using
(\ref{eq:cvH}) we see that $\frac{b_{\lfloor \alpha \z(\t_n)
\rfloor}}{{\z(\t_n)}}$ converges in probability towards $2\a$.  By applying the preceding observations with $ \alpha$ replaced by $ \alpha' \in ( \alpha,1)$, we
conclude that:
\begin{equation}\label{eq:HC}\frac{B_{\z(\t_n)}}{{\z(\t_n)}} \sup_{0 \leq t \leq \a} |C_{2 {\z(\t_n)} t}(\t_n) - H_{{\z(\t_n)} t}(\t_n)|
\quad \mathop{\longrightarrow}^{(\P)} \quad 0.\end{equation}
Together with (\ref{eq:cvH}), this implies:
\begin{equation}
\label{eq:cvja}\left(\frac{B_{\z(\t_n)}}{{\z(\t_n)}} C_{ 2 \z(\t_n)
t}(\t_n);\, 0 \leq t \leq a \right)  \quad
\mathop{\longrightarrow}^{(d)} \quad  \left( H_t;\, 0 \leq t \leq a
\right) \textrm{ under }\Nn( \, \cdot \, | \, \z=1).
\end{equation}

We now use a
time-reversal argument in order to show that the convergence holds
on the whole segment $[0,1]$. To this end, we adapt \cite[Remark
3.2]{Duquesne} and \cite[Section 2.4]{DuquesneLG} to our context.
See also \cite{Kautre}, where we used the same argument to give
another proof of Duquesne's Theorem \ref{thm:cv1}.  Observe that $ (C_t(\t_n); \, 0 \leq t \leq 2(\z(\t_n)-1))$ and
$(C_{2(\z(\t_n)-1)-t}(\t_n); \,  0 \leq t \leq 2(\z(\t_n)-1))$ have
the same distribution. From this convergence and the convergence \eqref{eq:cvja}, it is an easy exercise to obtain that:\begin{equation}\label{eq:cvC}\left(\frac{B_{\z(\t_n)}}{{\z(\t_n)}} C_{ 2 \z(\t_n)
t}(\t_n);\, 0 \leq t \leq 1 \right)  \quad
\mathop{\longrightarrow}^{(d)}  \quad \left( H_t;\, 0 \leq t \leq 1
\right) \textrm{ under }\Nn( \, \cdot \, | \, \z=1).\end{equation}
See the last paragraph of the proof of Theorem 6.1 in \cite{LGIto}
for additional details in a similar argument.

Finally, we verify that  (\ref{eq:H}) can be derived from \eqref{eq:cvC}.
To this end, we show that the convergence (\ref{eq:HC}) also holds for
$\alpha=1$. First note that:  \begin{equation}
\label{eq:g}
\sup_{0 \leq t \leq 2 }\left|
\widetilde{g}_n(t)-\frac{t}{2} \right| \leq \frac{1}{\z(\t_n)} \left(\frac{1}{2}
\sup_{k \leq {\z(\t_n)}} H_k(\t_n)+1\right)= \frac{1}{2
B_{\z(\t_n)}} \sup_{k \leq 2 {\z(\t_n)}}
\frac{B_{\z(\t_n)}}{\z(\t_n)} C_k(\t_n)+\frac{1}{{\z(\t_n)}}
\mathop{\longrightarrow}^{(\P)} 0\end{equation} by (\ref{eq:cvC}).
Secondly, by \eqref{eq:ineg}:
\begin{eqnarray*} \sup_{0 \leq t \leq 2}\left|
\frac{B_{\z(\t_n)}}{\z(\t_n)} C_{{\z(\t_n)} t}(\t_n)-
\frac{B_{\z(\t_n)}}{\z(\t_n)} H_{{\z(\t_n)}
\widetilde{g}_n(t)} ( \t_n)\right|
&\leq&
\frac{B_{\z(\t_n)}}{\z(\t_n)} + \frac{B_{\z(\t_n)}}{\z(\t_n)}
\sup_{k < \z(\t_n)
}|H_{k+1}(\t_n)-H_k(\t_n)| \\
& =&
\frac{B_{\z(\t_n)}}{\z(\t_n)}+\frac{B_{\z(\t_n)}}{\z(\t_n)} \sup_{k
<  \z(\t_n) }
\left|C_{ b_ {k+1}}(\t_n)-C_{ b_k}(\t_n)\right|.\end{eqnarray*}
By (\ref{eq:cvC}), in order to show that the latter quantity tends
to $0$ in probability, it is sufficient to verify that $\sup_{k <
\z(\t_n) } \z(\t_n) ^ {-1}\left|
{b_{k+1}}-{b_{k}}\right|$ converges to
$0$ in probability. But by the definition of $b_p$:
\begin{eqnarray*}\sup_{k < \z(\t_n) } \left|
\frac{b_{k+1}}{\z(\t_n)}-\frac{b_{k}}{\z(\t_n)}\right|=\sup_{k <
\z(\t_n) } \left|  \frac{2+H_k(\t_n)-H_{k+1}(\t_n)}{\z(\t_n)}\right|
&\leq&
\frac{2}{\z(\t_n)}+ 2\sup_{k < \z(\t_n) } \frac{H_k(\t_n)}{\z(\t_n)}
\end{eqnarray*}
which converges in probability to $0$ by the same argument as in \eqref{eq:g}. We have thus obtained that $$\frac{B_{\z(\t_n)}}{\z(\t_n)} \sup_{0 \leq t \leq 1} |C_{2 {\z(\t_n)} t}(\t_n) - H_{{\z(\t_n)} \widetilde{g}_n(2t)}(\t_n)|
\quad \mathop{\longrightarrow}^{(\P)} \quad 0.$$ Combining this with
(\ref{eq:cvC}), we conclude that:
$$\left(\frac{B_{\z(\t_n)}}{\z(\t_n)}C_{2 \z( \t_n) t}(\t_n), \frac{B_{\z(\t_n)}}{\z(\t_n)}H_{\z(\t_n) \widetilde{g}_n(2t)
}(\t_n)\right)_{0 \leq t \leq 1 } \quad
\mathop{\longrightarrow}^{(d)}_{n \rightarrow \infty}\quad (
H_t,H_t)_{0 \leq t \leq 1} \textrm{ under }\Nn( \, \cdot \, | \,
\z=1).$$
The convergence \eqref{eq:g} then entails:
$$\left(\frac{B_{\z(\t_n)}}{\z(\t_n)}C_{2 \z(\t_n) t}(\t_n), \frac{B_{\z(\t_n)}}{\z(\t_n)}H_{\z(\t_n) t}(\t_n)\right)_{0 \leq t \leq 1 } \quad
\mathop{\longrightarrow}^{(d)}_{n \rightarrow \infty}\quad (
H_t,H_t)_{0 \leq t \leq 1} \textrm{ under }\Nn( \, \cdot \, | \,
\z=1).$$
This completes the proof.
\end{proof}

\subsection{Proof of the technical lemma}

\label {sec:technic}

In this section, we control the Radon-Nikodym densities appearing in
Proposotion \ref{prop:abscon}. We heavily
rely on the strong version of the Local Limit Theorem (Theorem
\ref{thm:strongLL}).

Throughout this section, $(W_n)_{n \geq 0}$ will stand for the
same random walk as in Proposition \ref{prop:RW}. Recall also the notation $q_s$ introduced in Proposition \ref {prop:Gamma}.


\subsubsection{Proof of Lemma \ref{lem:technical} (i)}
We will use two lemmas to prove Lemma \ref{lem:technical} (i): the first one gives an estimate for
$\Prmuj{\z(\bf) \geq n}$ and the second one shows that
$\Prmuj{\z(\bf) \geq n}$ is close to $\Prmuj{\l(\bf) \geq
\mu_0 n-n^{3/4}}$.

\begin{lem}\label{lem:technical2}We have
$ \d \lim_{n \rightarrow \infty }  \sup_{1 \leq j \leq \a B_n} \left|
\Prmuj{\z(\bf) \geq n}- \int_1 ^\infty ds \, q_s\left(\frac{j}{
 B_{n} }\right) \right|=0.$\end{lem}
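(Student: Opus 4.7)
The plan is to combine Kemperman's identity with the local limit theorem, and then to recognize the resulting sum as a Riemann approximation of the integral on the right-hand side.

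First, by Proposition \ref{prop:RW} and equation (\ref{eq:kemperman}) summed over the possible values of $\Lambda(p)$, I would write
\[
\Prmuj{\zeta(\bf) \geq n} \;=\; \sum_{p=n}^{\infty} \frac{j}{p}\, \P[W_p = -j] \;=\; \sum_{p=n}^{\infty} \frac{j}{p B_p} \cdot B_p\, \P[W_p=-j].
\]
Since $\E[W_1] = 0$, Theorem \ref{thm:locallimit}(iii) supplies a uniform error $\epsilon(p) := \sup_{k \in \Z} |B_p\P[W_p=k] - p_1(k/B_p)|$ satisfying $\epsilon(p) \to 0$. Substituting $B_p\P[W_p=-j] = p_1(-j/B_p) + O(\epsilon(p))$ splits the sum into a ``main term'' and an ``error term''.

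For the error term, I would show that $\sum_{p\geq n} \frac{j}{p B_p}\epsilon(p) \to 0$ uniformly in $1 \leq j \leq \alpha B_n$. Since $j/B_n \leq \alpha$, this reduces to bounding $\sum_{p \geq n} \frac{B_n}{p B_p}$. By Proposition \ref{prop:slow}(ii) applied to $h$, there exists a small $\eta > 0$ and a constant $C$ such that $B_n/B_p \leq C(n/p)^{1/\theta - \eta}$ for all $p \geq n$ large enough, which makes $\sum_{p\geq n}\frac{B_n}{p B_p}$ bounded uniformly in $n$. The desired decay of the error term then follows by choosing a large cutoff $N$, using smallness of $\epsilon(p)$ for $p \geq N$ and bounding the finite range $n \leq p < N$ separately.

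For the main term, I would set $x = j/B_n \in (0,\alpha]$ and view
\[
\sum_{p=n}^{\infty} \frac{j}{p B_p}\, p_1(-j/B_p) \;=\; \frac{1}{n}\sum_{p=n}^\infty \frac{xB_n}{s_p B_{ns_p}}\, p_1\!\left(-\frac{xB_n}{B_{ns_p}}\right),\qquad s_p = p/n,
\]
as a step-function approximation, with step $1/n$, of the integral
\[
\int_1^\infty \frac{xB_n}{sB_{ns}}\, p_1\!\left(-\frac{xB_n}{B_{ns}}\right)\, ds.
\]
By Proposition \ref{prop:slow}(ii) we have $B_n/B_{ns} \to s^{-1/\theta}$ pointwise and $B_n/B_{ns} \leq C s^{-1/\theta+\eta}$ on $[1,\infty)$. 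Since $p_1$ is continuous and bounded, the integrand converges pointwise to $\frac{x}{s^{1+1/\theta}} p_1(-xs^{-1/\theta}) = q_s(x)$ and is dominated, uniformly in $x \in (0,\alpha]$, by $\alpha C \|p_1\|_\infty \cdot s^{-1-1/\theta+\eta}$, which is integrable on $[1,\infty)$. Dominated convergence, together with a comparison between the Riemann sum and its integral that uses the same majorant, then gives the convergence uniformly in $x \in (0,\alpha]$.

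The principal obstacle is maintaining uniformity in $j$ all the way up to $\alpha B_n$: both the error term bound and the Riemann-to-integral passage involve ratios $B_n/B_p$ that depend on $p \geq n$, and must be controlled by a single $j$-independent majorant. This is precisely what the Potter-type bound in Proposition \ref{prop:slow}(ii) provides, and using it systematically on both sides of the argument is what makes the scheme work.
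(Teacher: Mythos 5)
Your argument is correct, and it is essentially the standard proof of this estimate: the paper itself does not prove the lemma but defers to Le Gall \cite{LGIto} and to \cite[Lemma 3.2 (ii)]{Kautre}, whose argument is exactly your combination of Kemperman's formula $\P[\zeta_j=p]=\frac{j}{p}\P[W_p=-j]$, the local limit theorem, Potter-type bounds from Proposition \ref{prop:slow} (ii) to dominate $B_n/B_p$, and dominated convergence to pass from the sum to $\int_1^\infty q_s(j/B_n)\,ds$. The only point worth tightening is the last step: dominated convergence alone does not yield uniformity in $x=j/B_n\in(0,\alpha]$, so you should apply it to $g_n(s)=\sup_{0<x\leq\alpha}|\cdot|$, using that $B_n/B_{ns}\to s^{-1/\theta}$ does not involve $x$ and that $p_1$ is uniformly continuous.
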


\begin{proof}Le Gall established this result in the case where the variance of $\mu$ is finite in \cite{LGIto}.
See \cite[Lemma 3.2 (ii)]{Kautre} for the proof in the general case,
which is a generalization of Le Gall's proof.
\end{proof}

\begin{lem}\label{lem:g}Fix $\a>0$. We have
$ \d \lim_{n  \rightarrow \infty}  \sup_{1 \leq j \leq \a B_n}|\Pmuj[\z(\bf) \geq n]-\Pmuj[\l(\bf) \geq
\mu_0n-n^{3/4}]|=0$.
\end{lem}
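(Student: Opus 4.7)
The plan is to combine the already established large deviation lemma (Lemma \ref{lemma:passage}) with a uniform local limit bound for the distribution of $\z(\bf)$ under $\Pmuj$. The statement measures how much the events $\{\z(\bf)\geq n\}$ and $\{\l(\bf)\geq \mu_0 n-n^{3/4}\}$ differ, and I expect the main discrepancy to come from trees for which $\z(\bf)$ lies in a narrow window of width $O(n^{3/4})$ just below $n$.

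First I would apply Lemma \ref{lemma:passage} with $U\equiv 1$. This gives
$$
|\Pmuj[\z(\bf)\geq n]-\Pmuj[\l(\bf)\geq \mu_0 n-n^{3/4}]|
\leq \Pmuj[n-(\mu_0^{-1}+1)n^{3/4}\leq \z(\bf)\leq n]+oe_{1/2}(n),
$$
with the $oe_{1/2}(n)$ term uniform in $j$. Hence it suffices to show that the first term on the right-hand side tends to $0$ uniformly in $1\leq j\leq \alpha B_n$.

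Next I would use Proposition \ref{prop:RW} together with the consequence of the Cyclic Lemma noted after \eqref{eq:kemperman}, namely $\P[\z_j=p]=\frac{j}{p}\P[W_p=-j]$. This yields
$$
\Pmuj[n-(\mu_0^{-1}+1)n^{3/4}\leq \z(\bf)\leq n]=\sum_{p=\lceil n-(\mu_0^{-1}+1)n^{3/4}\rceil}^{n}\frac{j}{p}\P[W_p=-j].
$$
By Theorem \ref{thm:locallimit} (iii) and the fact that $p_1$ is bounded, there exists a constant $M>0$ such that $\P[W_p=-j]\leq M/B_p$ for all $p$ large enough, uniformly in $j\in\Z$ (recall that $\E[W_1]=0$). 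Since $B_p=h(p)p^{1/\theta}$ with $h$ slowly varying, Proposition \ref{prop:slow} implies $B_p/B_n\to 1$ uniformly over $p$ in the window $[n-(\mu_0^{-1}+1)n^{3/4},n]$. Combining these estimates gives, for $n$ large,
$$
\sum_{p=\lceil n-(\mu_0^{-1}+1)n^{3/4}\rceil}^{n}\frac{j}{p}\P[W_p=-j]\leq \frac{C\,j\,n^{3/4}}{n\,B_n}
$$
for some constant $C>0$, and this bound is at most $C\alpha/n^{1/4}$ whenever $j\leq\alpha B_n$. Letting $n\to\infty$ yields the desired uniform convergence.

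The only delicate point is the interchange of the supremum in $j$ with the local limit estimate, but this is handled by the uniform version of Theorem \ref{thm:locallimit} (iii). Everything else is bookkeeping with slowly varying functions and the window of width $O(n^{3/4})$, so I do not expect serious obstacles beyond verifying these uniformity statements carefully.
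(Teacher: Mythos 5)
Your proof is correct and follows essentially the same route as the paper: reduce via Lemma \ref{lemma:passage} with $U\equiv 1$ to the window probability $\Pmuj[n-(\mu_0^{-1}+1)n^{3/4}\leq\z(\bf)\leq n]$, express it through $\frac{j}{p}\P[W_p=-j]$, and kill it with the local limit theorem. The only cosmetic difference is that the paper bounds each term by $(M+2\alpha\e)/k$ using the boundedness of $x\mapsto xp_1(-x)$, whereas you use $\P[W_p=-j]\le M/B_p$ and then invoke $j\le\alpha B_n$; the two are equivalent reorganizations of the same estimate of order $n^{-1/4}$.
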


\begin{proof}
To simplify notation, set $ \gamma=\mu_0^{-1}+1$. By Lemma \ref{lemma:passage}, it is sufficient to show that:
$$\lim_{n  \rightarrow \infty}  \sup_{1 \leq j \leq \a B_n}\Pmuj[n- \gamma n^{3/4} \leq \z(\bf) \leq
n]=0.$$ From the local limit theorem (Theorem \ref{thm:locallimit}), we have, for every $j \in \Z$: $$ \left|{B}_k \P[W_k=j]-
p_1\left(\frac{j}{{B}_k}\right) \right| \leq \epsilon(k),$$ where
$\e(k)\rightarrow 0$. The function $x \mapsto |x p_1(-x)|$ is bounded over $\R$
by a real number which we will denote by $M$ (see e.g. \cite[I. 4]{Zolotarev}). Set $M_n(j)=\Pmuj[n- \gamma n^{3/4} \leq \z(\bf) \leq n]$ and $ \delta(n)= \fl{n- \gamma n^{3/4}}+1$. Fix $\e>0$ and suppose that $n$ is
sufficiently large so that $ n- \gamma n^{3/4} \leq k \leq n$ implies $|\e(k)| \leq
\e$ and $ B_k \geq  B_n/2$. Then, for $1 \leq j
\leq \a B_n$, by \eqref{eq:kemperman}:
\begin{eqnarray*}
M_n(j)&=& \sum_{k= \delta(n)}^n
\Pmuj[\z(\bf)=k] =\sum_{k=\delta(n)}^n \frac{j}{k} \Pmu[W_k=-j]
 \\
 &\leq&  \sum_{k= \delta(n)}^n  \frac{ j }{ k {B}_k}
 \left(p_1\left(-\frac{j}{{B}_k}\right)+ \e(k) \right)\\
 &\leq& \sum_{k= \delta(n) }^n  \frac{M+2 \a \e}{k},
\end{eqnarray*}
which tends to $0$ as $n \rightarrow \infty$.
\end{proof}

\begin{proof}[Proof of Lemma \ref{lem:technical} (i)] By
Lemmas \ref{lem:technical2}  and \ref{lem:g}:
\begin{equation}
\label{eq:e}
\lim_{n \rightarrow \infty } \left( \sup_{1 \leq j \leq \a B_n} \left|
\Pmuj[\l( \bf) \geq \mu_0n-n^{3/4}]- \int_1 ^\infty ds \,
q_s\left(\frac{j}{
 B_n }\right) \right|\right)=0.
 \end{equation}
Let ${r_n}$  be the greatest positive integer such that $ \lceil \mu_0 {r_n}-{r_n}^{3/4} \rceil=n$. We apply \eqref{eq:e} with $n$ replaced by ${r_n}$, and we see that the desired result will follow if we can prove that
$$\lim_{n \rightarrow \infty } \left(
\sup_{1 \leq j \leq \a B_n} \left| \int_1 ^\infty ds \,
q_s\left(\frac{j}{
 B_{ {{r_n}}} }\right)- \int_1 ^\infty
ds \, q_s\left(\frac{j}{ B_{n/\mu_0} }\right) \right|\right)=0.$$
The proof of the latter convergence is similar to that of
(\ref{eq:amontrer}) noting that:
$$\lim_{n \rightarrow \infty}\left| \frac{B_n}{
 B_{{r_n}}}-\frac{B_n}{ B_{n/\mu_0}}\right|=0.$$
This completes the proof of Lemma \ref{lem:technical} (i).
\end{proof}

\subsubsection{Proof of Lemma \ref{lem:technical} (ii)}

\label {sec:technik}

The proof of Lemma \ref{lem:technical} (ii) is very
technical, so we will sometimes only sketch arguments.

As previously, denote by $S_n$ the sum of $n$ independent Bernoulli
random variables of parameter $\mu_0$, and by $W'$ the random walk
$W$ conditioned on having nonnegative jumps. More precisely,
$\P[W'_1=i] = \mu(i+1) / (1-\mu_0)$ for $i \geq 0$.
Recall that $\E[W'_1]={\mu_0}/({1-\mu_0})$ and that $ \sp$ is the variance of $W'_1$.

Fix
$0<\e<1$. By Lemma \ref{lem:GdesDev2} (ii):

\begin{equation}
\label{eq:psi}
n \psi_n(j) =n \Pmuj \left[ \lt = n ,\,   \frac{n}{\mu_0}-\e n \leq \z(\bf)
\leq \frac{n}{\mu_0} +\e n \right] + oe_{1/2}(n),
\end{equation}
where the estimate $oe_{1/2}(n)$ is uniform in $j$. It is thus
sufficient to control the first term in the last expression.  For $ | u| \leq  \epsilon \sqrt {n}$ and $1 \leq j \leq n$ set: 
$$ r_n(u)= \fl{ {n}/{\mu_0}+u \sqrt{n}}, \qquad a_n(u)=\sqrt{n} \P[S_{r_n(u)}=n], \qquad
b_n(u,j)= B'_n \P[W'_{r_n(u)-n}=n-j],$$
and using Proposition \ref{prop:K} write:
\begin{eqnarray}
 && n \Pmuj \left[ \lt = n ,\, \frac{n}{\mu_0}-\e n \leq \z(\bf)
\leq \frac{n}{\mu_0} +\e n \right] \notag \\
&& \d \qquad \qquad\qquad \qquad \qquad = n \sum_{p= \ce{{n}/{\mu_0}-\e n}}^{\fl{{n}/{\mu_0}+\e n}}
\frac{j}{p} \,
\P[S_p=n] \, \P[W'_{p-n}=n-j]\notag\\
&& \d \qquad \qquad \qquad \qquad \qquad =  n \d \int_ {n/ \mu_0 - \epsilon n + O(1)} ^{n/ \mu_0 + \epsilon n + O(1)} dx \frac{j}{ \fl {x}} \Pr {S_ { \fl {x}} =n} \Pr {W'_ { \fl {x}-n}=n-j}
\notag \\
&& \qquad \qquad \qquad \qquad \qquad =  \frac{j}{ B'_n } \int_{- \e \sqrt{n}+o(1)} ^{\e \sqrt{n}+o(1)} du \,
\frac{n}{r_n(u)} a_n(u) b_n(u,j).\label{eq:retenir}
\end{eqnarray}

Let us introduce the following notation. Set  $c= {\mu_0}/({1-\mu_0})$ and for $u,x \in \R$:
$$F(u)=\frac{1}{\sqrt{2 \pi \mu_0 (1-\mu_0)}} e^{- \frac{ 1}{2 \mu_0
(1-\mu_0)} u^2}, \qquad 
G_0(u,x)=  c^ {1/ \theta} p_1 \left(-c^{1/\theta}x-\frac{ \sqrt {2} c^{3/2}u}{ \sigma'} 1_{\{ \sigma^2< \infty\}} \right).$$
Put ${F}_0(u)= \sqrt { \mu_0} F (\mu_0 ^ {3/2}u)$. Fix $ \alpha>0$. Set finally $\a'=\a(1+(1-\mu_0)^{1/\theta})$. By Lemma \ref{lem:hhp}, for $n$ sufficiently large, we have $ \a B_n \leq  \a' B'_n$. We suppose in the following that $n$ is sufficiently large so that the latter inequality holds.

\begin {lem} \label {lem:techniqueutile}For fixed $u \in \R$, we have:
$$a_n(u)  \quad 
 \mathop{\longrightarrow}_{n \rightarrow \infty} \quad  {F}_0(u), \qquad  \sup_ {1 \leq j \leq \alpha B_n} \left| b_n(u,j) - G_0(u, {j}/{B'_n}) \right|\quad 
 \mathop{\longrightarrow}_{n \rightarrow \infty} \quad  0.$$
\end {lem}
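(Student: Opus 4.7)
The plan is to obtain both assertions from a careful application of the local limit theorem, together with the asymptotic identifications of the parameters that enter the Gaussian density $F_0$ and the stable density $p_1$.

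For the first assertion, the random variable $S_{r_n(u)}$ is a sum of $r_n(u)$ i.i.d.\ Bernoulli($\mu_0$) variables, so it has mean $\mu_0 r_n(u) = n + \mu_0 u \sqrt{n}+O(1)$ and variance $\mu_0(1-\mu_0)\,r_n(u) = (1-\mu_0) n + o(n)$. Applying the classical lattice local limit theorem (for instance \cite[Theorem 4.2.1]{IL}) to the centred walk $S_k - \mu_0 k$, one obtains
$$\sqrt{n}\,\P[S_{r_n(u)}=n] \;=\; \sqrt{\tfrac{n}{r_n(u)\mu_0(1-\mu_0)}}\cdot\tfrac{1}{\sqrt{2\pi}}\,\exp\!\left(-\tfrac{(n-\mu_0 r_n(u))^2}{2\mu_0(1-\mu_0)r_n(u)}\right)+o(1),$$
and a direct computation using $r_n(u)\sim n/\mu_0$ and $n-\mu_0 r_n(u)=-\mu_0 u\sqrt{n}+O(1)$ shows that the right-hand side converges to $(2\pi(1-\mu_0))^{-1/2}\exp(-\mu_0^2 u^2/(2(1-\mu_0)))$, which is exactly $\sqrt{\mu_0}\,F(\mu_0^{3/2}u)=F_0(u)$.

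For the second assertion, set $m=m(n,u)=r_n(u)-n$, so $m\sim n(1-\mu_0)/\mu_0 = n/c$. Since $W'$ is itself in the domain of attraction of a stable law of index $\theta$, Theorem \ref{thm:locallimit}(iii) applied to $W'$ yields
$$B'_m\,\P[W'_m=n-j]\;=\;p_1\!\left(\tfrac{(n-j)-cm}{B'_m}\right)+\varepsilon_m,\qquad \sup_k|\varepsilon_m|\longrightarrow 0,$$
so that $b_n(u,j)=(B'_n/B'_m)\,p_1((n-j-cm)/B'_m)+(B'_n/B'_m)\varepsilon_m$. The ratio $B'_n/B'_m$ is handled by Proposition \ref{prop:slow} together with $m\sim n/c$: in the infinite variance case $B'_n/B'_m=(n/m)^{1/\theta}h'(n)/h'(m)\to c^{1/\theta}$, whereas in the finite variance case $B'_n/B'_m=\sqrt{n/m}\to\sqrt{c}=c^{1/\theta}$. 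In particular $B'_n/B'_m$ is bounded, so the $\varepsilon_m$ term is $o(1)$ uniformly in $j$. For the argument of $p_1$, a direct computation using $cm=n+cu\sqrt{n}+O(1)$ gives $(n-cm)/B'_m=-cu\sqrt{n}/B'_m+O(1/B'_m)$, which tends to $-\sqrt{2}c^{3/2}u/\sigma'$ when $\sigma^2<\infty$ (by the explicit form $B'_m=\sigma'\sqrt{m/2}$) and tends to $0$ when $\sigma^2=\infty$ (since $B'_m$ then grows strictly faster than $\sqrt{n}$, because $m^{1/\theta}$ dominates $\sqrt{m}$ when $\theta<2$, and $h'(m)\to\infty$ when $\theta=2$ and $\sigma^2=\infty$). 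Meanwhile $j/B'_m=(j/B'_n)\cdot(B'_n/B'_m)\to c^{1/\theta}x$ where $x=j/B'_n$. Putting these together, $(n-j-cm)/B'_m$ converges to $-c^{1/\theta}x-\sqrt{2}c^{3/2}u/\sigma'\cdot\mathbf{1}_{\{\sigma^2<\infty\}}$.

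The only delicate point is upgrading the pointwise limit above to uniform convergence for $j\in[1,\alpha B_n]$ (equivalently $x=j/B'_n\in[1/B'_n,\alpha']$). This will be the main obstacle. It is overcome by noting that: (a) the error $\varepsilon_m$ in Theorem \ref{thm:locallimit}(iii) is uniform in $j$ by definition; (b) the bound $j/B'_n\le\alpha'$ guarantees that the arguments of $p_1$ remain in a fixed compact interval of $\R$ for all $n$ large; and (c) $p_1$ is uniformly continuous on that compact. Combining these three facts with the identifications of the limits of $B'_n/B'_m$, $(n-cm)/B'_m$, and $-(j/B'_m)+(-c^{1/\theta}x)$ performed above, the quantity $|b_n(u,j)-G_0(u,j/B'_n)|$ is bounded uniformly in $j$ by a vanishing sequence, which concludes the proof.
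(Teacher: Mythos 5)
Your proof is correct and follows essentially the same route as the paper: apply the local limit theorem to $S$ at time $r_n(u)$ and to $W'$ at time $r_n(u)-n$, identify the limits of $B'_n/B'_{r_n(u)-n}$ and of the arguments of $p_1$, and obtain uniformity in $j$ from the bound $j/B'_n\le\alpha'$ together with the regularity of $p_1$ (the paper phrases this last step via the global bound on $p_1'$ and an explicit quantity $K_n(u,j)$, but that is only a cosmetic difference from your uniform-continuity-on-a-compact argument).
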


\begin {proof}The first convergence is an immediate consequence of Theorem \ref {thm:locallimit} (iii) after noting that $ \Es {S_1}= \mu_0$ and that the variance of $S_1$ is $ \mu_0 (1- \mu_0)$. The second convergence is more technical. To simplify notation, set: $$ q_n(u)=r_n(u)-n, \qquad  {Q}_n(u,j)= c^ {1/ \theta} p_1 \left(\frac{n-j- c q_n(u)}{B'_{q_n(u)}} \right).$$
Note that $q_n(u)=n/c+ u \sqrt {n} + \mathcal{O}(1)$. In particular,  $B'_n \sim c^ {1/ \theta} B'_ {q_n(u)} $ as $ n \rightarrow \infty$. Consequently, by \eqref{eq:ll}, 
$\left| b_n(u,j) - Q_n(u,j) \right| \rightarrow 0$ as $ n \to \infty$, uniformly in $0 \leq j \leq \alpha B_n$.
It thus remains to show that
\begin{equation}
\label{eq:cvmoche} \sup_ {1 \leq j \leq \alpha B_n} \left| Q_n(u,j) - G_0(u,j/B'_n)\right|\longrightarrow 0.
\end{equation}
To this end, introduce:$$K_n(u,j)=\left | \frac{n-j- c q_n(u)}{B'_{q_n(u)}} + c^{1/\theta} \frac{j}{
B'_n }+  \frac{ \sqrt {2} c^{3/2}u}{ \sigma'}  1_{\{ \sigma^2< \infty\}}\right|.$$Recall that the absolute value of the derivative of
$p_1$ is bounded by a constant which will be denoted by $M'$, giving $\left| Q_n(u,j) - G_0(u,j/B'_n)\right| \leq  M' K_n(u,j)$. It is thus sufficient to show that  $K_n(u,j)\rightarrow 0$ as $ n \to \infty$, uniformly in $0 \leq j \leq \alpha B_n$.

We first treat the case where $\sigma^2< \infty$,
so that $\theta=2$. In this case, $B'_n=\sigma' \sqrt{n/2}$,
where $\sp$ is the variance of $W'_1$. Simple calculations show that $K_n(u,j) \leq  A /  \sqrt {n}$ for some $A \geq 1$ depending only on $u$, so that $K_n(u,j)\rightarrow 0$ as $ n \to \infty$, uniformly in $0 \leq j \leq \alpha B_n$.

Let us now suppose that $\sigma^2= \infty$. First assume that $ \theta <2$. Choose $\eta>0$ such
that $\e':=1/\theta-\eta-1/2>0$. By Proposition \ref{prop:slow} (i), for $n$ sufficiently large, $B'_{q_n(u) } \geq  {n^{1/\theta-\eta}}$.
Moreover, we can write ${B'_n}/({c^{1/\theta} B'_{q_n(u)}})=1+\e_n(u)$
where, for fixed $u$, $\e_n(u) \rightarrow 0$ as $n \rightarrow \infty$. Putting these estimates together, we
obtain that for large $n$ and for $1 \leq j \leq  \alpha B_n$:
$$K_n(u,j)= \left| c^ {1/ \theta}\frac{j}{B'_n} \e_n(u)+ \frac{c u \sqrt {n}+ \mathcal{O}(1)}{B'_ {q_n(u)}} \right|\leq \a' c^{1/\theta} \e_n(u)+ \frac{cu}{n^{\e'}}+\mathcal{O} \left( \frac{1}{n^ {1/4}} \right),$$
which tends to $0$ as $ n \to \infty$.

We finally treat the case when $ \sigma^2= \infty$ and $ \theta=2$. Recall the definition of the slowly varying function $h'$ introduced in Section \ref {sec:nonlatice} and let $ \e_n(u)$ be as previously. By the remark following the proof of Theorem \ref{thm:locallimit}, $h'$ is increasing so that for $n$ large enough:
$$K_n(u,j)= \left| c^ {1/ \theta}\frac{j}{B'_n} \e_n(u)+ \frac{c u \sqrt {n}+ \mathcal{O}(1)}{h'( {q_n(u)}) \sqrt { q_n(u)}} \right|\leq \a' c^{1/\theta} \e_n(u)+ A\frac{u}{h'(n/(2c))}+\mathcal{O} \left( \frac{1}{n^ {1/4}} \right)$$
for some $A>0$. The latter quantity tends to $0$ as $n \rightarrow \infty$ since $h'(n) \rightarrow \infty$ as $n \rightarrow \infty$ by the remark following the proof of Theorem \ref{thm:locallimit}. This completes the proof.
\end {proof}

\begin{proof}[Proof of Lemma \ref{lem:technical} (ii)] 
From Theorem \ref {thm:strongLL} we have the bound $a_n(u) \leq  C (1 \wedge u ^ {-2})$ and by \eqref{eq:ll}, the functions $b_n$ are uniformly bounded. Since, for $ j \leq  \alpha B_n$,
$$ \frac{j}{B'_n} \left| \int_{- \e \sqrt{n}} ^{\e \sqrt{n}} du \,
\frac{n}{r_n(u)} \, a_n(u) b_n(u,j) - 
\mu_0  \int_{- \e \sqrt{n}} ^{\e \sqrt{n}}
du \,  a_n(u) b_n(u,j) \right| \leq C \alpha'  \int_ {- \e \sqrt {n}} ^ { \e \sqrt {n}} du (1 \wedge u^ {-2}) \left| \frac{n}{r_n(u)}- \mu_0 \right|,$$
it follows from the dominated convergence theorem that:
\begin{equation}
\label{eq:cvdd}\sup_ {1 \leq j \leq  \alpha B_n}\left| \frac{j}{B'_n} \int_{- \e \sqrt{n}} ^{\e \sqrt{n}} du \,
\frac{n}{r_n(u)} \, a_n(u) b_n(u,j) - 
\frac{j}{B'_n} \mu_0  \int_{- \e \sqrt{n}} ^{\e \sqrt{n}}
du \,  a_n(u) b_n(u,j) \right| \quad 
 \mathop{\longrightarrow}_{n \rightarrow \infty} \quad 0.
\end{equation}
Recall that $ q_1(x)=xp_1(-x)$. By \eqref{eq:psi}, \eqref{eq:retenir} and \eqref{eq:cvdd}, to prove Lemma \ref{lem:technical} (ii), it is sufficient to establish that:
\begin{equation}
 \label{eq:lemtechnique0}\sup_ {1 \leq j \leq  \alpha B_n}\left|  \mu_0  \int_{- \e \sqrt{n}} ^{\e \sqrt{n}}
du \,  a_n(u) b_n(u,j) - c^ {1/ \theta}p_1\left(-\frac{j}{B_{n/\mu_0}}\right) \right| \quad 
 \mathop{\longrightarrow}_{n \rightarrow \infty} \quad 0 
 \end{equation}
 
Let us first show that:
\begin{equation}
\label{eq:lemtechnique1}
\sup_ {1 \leq j \leq  \alpha B_n}\left|  \mu_0 \int_{- \e \sqrt{n}} ^{\e \sqrt{n}} du \,
a_n(u) b_n(u,j) -
 \mu_0 \int_{- \infty} ^{+ \infty}
du \,  F_0( u) G_0(u, {j}/{B'_n}) \right| \quad 
 \mathop{\longrightarrow}_{n \rightarrow \infty} \quad 0.\end{equation} 
To this end, let us prove the following stronger convergence: 
\begin{equation}
\label{eq:lemtechnique2} \int_{- \e \sqrt{n}} ^{\e \sqrt{n}} du  \, \left(\sup _ {1 \leq j \leq  \alpha B_n} 
\left| a_n(u) b_n(u,j) - F_0( u) G_0(u, {j}/{B'_n}) \right| \right) \quad 
 \mathop{\longrightarrow}_{n \rightarrow \infty} \quad 0.
\end{equation}
It is clear that the function $G_0$ is uniformly bounded. Recall that the functions $b_n$ are uniformly bounded as well. Moreover, $F_0$ is an integrable function and we have the bound $a_n(u) \leq  C (1 \wedge u ^ {-2}) $. The convergence \eqref{eq:lemtechnique2} then follows from Lemma \ref {lem:techniqueutile} and the dominated convergence theorem. This proves \eqref{eq:lemtechnique1}.

To
conclude, we distinguish two cases.
First assume that $ \sigma^2< \infty$, so that $\theta=2$.  Then $W'_1$ has finite variance $ \sp$ as well. Recall that $\sp={(\sigma^2-\mu_0)}/({1-\mu_0})-({\mu_0}/({1-\mu_0}))^2$ and $B'_n=\sigma' \sqrt{n/2}$. A straightforward calculation based on the fact that, for $\a,\b,\gamma, \delta >0$,
$$\int_{- \infty} ^ { + \infty} du \, e^{- \a u^2} e^{- \b (\gamma+\delta u)^2}
= \frac{\sqrt{\pi}}{\sqrt{\a+\b \delta^2}} e^{- \frac{\a
\gamma^2}{\a/\b+\delta^2}}$$
gives:
\begin{eqnarray*}
\mu_0 \int_{- \infty} ^{+ \infty}
du \,  F_0( u) G_0(u, {j}/{B'_n})&=& \int_{- \infty} ^{+ \infty} du \,
\mu_0 F_0(u) c^ {1/2}p_{1}\left(c^{1/2}
\frac{j}{\sigma' \sqrt{n/2}}+\frac{ \sqrt {2} c^{3/2}u}{ \sigma'} \right) \\
&=& c^ {1/2}p_1\left(-\frac{j}{\sigma\sqrt{n/(2\mu_0)}}\right) = c^ {1/2}p_1\left(-\frac{j}{B_{n/\mu_0}}\right).
\end{eqnarray*}
By combining this with \eqref{eq:lemtechnique1}, we get \eqref{eq:lemtechnique0} as desired.

Now assume that $ \sigma^2 = \infty$. In this case:
\begin{equation}
\label{eq:lemtechnique3}
\mu_0 \int_{- \infty} ^{+ \infty}
du \,  F_0( u) G_0(u, {j}/{B'_n})=\int_{- \infty} ^{+ \infty} du \, \mu_0 F_0( u)
 c^ {1/ \theta}p_1\left(-c^{1/\theta} \frac{j}{B'_n}
 \right)= c^{1/\theta}
 p_1\left(-c^{1/\theta} \frac{j}{B'_n}
 \right).
 \end{equation}
 By Lemma \ref{lem:hhp}, $
 {B'_n}/{B_{n/\mu_0}} \rightarrow c^{1/\theta}$, which implies:
\begin{equation}
\label{eq:lemtechnique4}
 \sup_{1 \leq j \leq \a B_n}
 \left| 
 p_1\left(-c^{1/ \theta} \frac{j}{B'_n}
 \right) - 
 p_1\left(-\frac{j}{B_{n/\mu_0}}\right)\right|  \quad 
 \mathop{\longrightarrow}_{n \rightarrow \infty} \quad 0.\end{equation}
By combining \eqref{eq:lemtechnique3} and \eqref{eq:lemtechnique4} with \eqref{eq:lemtechnique1}, we get \eqref{eq:lemtechnique0} as desired.\end{proof}

\section{Convergence of rescaled Lukasiewicz paths when conditioning on having exactly $n$ leaves}

We have previously established that the rescaled Lukasiewicz path,
height function and contour process of a tree distributed according to $\P_\mu[\, \cdot \, | \,
\lt=n]$ converge in distribution on $[0,a]$ for every $a \in
(0,1)$. Recall that by means of a time-reversal argument, we were
able to extend the convergence of the scaled height and contour functions to the whole segment $[0,1]$.
However, since the Lukasiewicz path $ \W(\tau)$ of a tree distributed according to $\P_\mu[\, \cdot
\, | \, \lt=n]$ is not invariant under time-reversal, another
approach is needed to extend the
convergence of $ \W(\tau)$ (properly rescaled) to the whole segment $[0,1]$. To this end, we will use a Vervaat
transformation. Let us stress that the Lukasiewicz path
of a tree distributed according to $\Pmu[\, \cdot \, | \, \lt=n]$ does not have a deterministic
length, so that special care is necessary to prove the following
theorem.

Recall that $ \mu$ is a probability distribution on $ \N$ satisfying the hypothesis $ (H _ \theta)$ for some $ \theta \in (1,2]$. Recall also the definition of the sequence $(B_n)$, introduced just before Lemma
\ref{lem:locallimit}.

\begin{thm}\label{thm:cvW3}
For every $n\geq 1$ such that $\Pmu[ \lt = n]>0$, let $\t_n$ be a
random tree distributed according to $\Pmu[\, \cdot \, | \, \lt  =
n]$. Then:
\begin{equation}\label{eq:thmcvW3}\left(\frac{1}{B_{\z(\t_n)} } \W_{ \lfloor \z(\t_n) t \rfloor
}(\t_n);\, 0 \leq t \leq 1 \right) \quad
 \mathop{\longrightarrow}^{(d)}_{n \rightarrow \infty} \quad (X_t; 0 \leq t \leq 1) \textrm{ under } \Nn( \, \cdot \, |
\, \z=1).\end{equation}
\end{thm}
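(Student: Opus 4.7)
The convergence of \eqref{eq:thmcvW3} restricted to $[0, a]$ for every $a \in (0,1)$ is already contained in Theorem \ref{thm:cvG} (i), so the only point at issue is to extend it up to the endpoint $t = 1$. The time-reversal trick used in Theorem \ref{thm:cvG} (ii) for the height and contour functions does not apply here, because the Lukasiewicz path is not invariant under time reversal. The plan is therefore, as announced at the beginning of the section, to use a Vervaat-type cyclic shift argument.

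\emph{Step 1: Vervaat reduction.} Condition first on $\{\zt = p,\, \lt = n\}$. By Lemma \ref{lem:GdesDev2} (ii), only the values of $p$ in the window $[n/\mu_0 - n^{3/4},\, n/\mu_0 + n^{3/4}]$ contribute asymptotically. Exactly as in the proof of Proposition \ref{prop:K}, the Cyclic Lemma identifies the Lukasiewicz path of $\tau$ under $\Pmu[\,\cdot\, |\, \zt = p,\, \lt = n]$ with the Vervaat transform $V(W^{(p,n)})$, where $W^{(p,n)} = (W_0, \ldots, W_p)$ is the bridge obtained from the random walk $W$ of Proposition \ref{prop:RW} conditioned on $\{W_p = -1,\, \Lambda(p) = n\}$, and $V$ cyclically shifts a path so that the unique cyclic rotation staying above $-1$ until its last step is brought to the origin.

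\emph{Step 2: Bridge invariance and continuous mapping.} Combining the local limit theorem of Theorem \ref{thm:locallimit} (iii) for $W$ with the decomposition \eqref{eq:LambdaUtile} (in the spirit of the proof of Lemma \ref{lem:technical}), one shows that as $p \to \infty$ with $n/p \to \mu_0$, the rescaled bridge
$$\left( \frac{1}{B_p} W^{(p,n)}_{\lfloor p t \rfloor}\right)_{0 \leq t \leq 1}$$
converges in distribution in $\D([0,1], \R)$ to the normalized bridge of $X$. The extra conditioning by $\Lambda(p) = n$ amounts, via \eqref{eq:LambdaUtile}, to reweighting by (asymptotically) the Gaussian density of $\Lambda(p)/p$ at the point $\mu_0$, which is a positive constant, and therefore does not change the limit. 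The Vervaat map $V$ is continuous on $\D([0,1], \R)$ at paths attaining their infimum at a unique point, which almost surely holds for the stable bridge; moreover, the classical Vervaat identity for spectrally positive L\'evy bridges states that $V$ sends the law of the stable bridge to $\Nn(\,\cdot\, |\, \z = 1)$. The continuous mapping theorem then yields, conditionally on $\{\zt = p,\, \lt = n\}$,
$$\left( \frac{1}{B_p} \W_{\lfloor p t \rfloor}(\tau)\right)_{0 \leq t \leq 1} \,\,\mathop{\longrightarrow}^{(d)}\,\, (X_t)_{0 \leq t \leq 1} \textrm{ under } \Nn(\,\cdot\, |\, \z = 1).$$

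\emph{Step 3: Unconditioning, and the main obstacle.} Since $\z(\t_n)/n \to 1/\mu_0$ in probability by Lemma \ref{lem:GdesDev2} (ii), we have $B_{\z(\t_n)}/B_p \to 1$ in probability when $p$ ranges over the window above; integrating the conditional convergence of Step 2 against the conditional law of $\z(\t_n)$ under $\Pmuln$ yields \eqref{eq:thmcvW3}. The whole technical weight of the argument is concentrated in Step 2: establishing the invariance principle for the doubly conditioned bridge requires controlling the joint conditioning by $\{W_p = -1\}$ and $\{\Lambda(p) = n\}$ uniformly in $p$ in a window of size $n^{3/4}$ around $n/\mu_0$, and so combines the local limit theorem for $W$ with the Gaussian local limit theorem for the Bernoulli sum $\Lambda(p)$, exactly in the spirit of Lemma \ref{lem:technical}. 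The remaining ingredients — Skorokhod continuity of $V$ at the stable bridge and the Vervaat identity itself — are by now classical.
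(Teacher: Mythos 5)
Your overall strategy (reduce to a bridge, apply the discrete Vervaat transform, pass to the limit, then use continuity of the continuous Vervaat map at the stable bridge together with Chaumont's identity) matches the paper's at the top level, but your reduction is genuinely different: you condition on the total progeny $\zt=p$ as well and work with the fixed-length bridge $(W_0,\ldots,W_p)$ under $\P[\,\cdot\,|\,W_p=-1,\,\Lambda(p)=n]$, whereas the paper keeps only the conditioning $\{\lt=n\}$ and works with the random-length path stopped at $T_n$ under $\P[\,\cdot\,|\,W_{T_n}=-1]$ (Proposition \ref{prop:vervaatdiscrete}), obtaining the bridge limit on $[0,a]$ via the Markov-property absolute continuity of Lemma \ref{lem:absolueC2} and the estimate of Lemma \ref{lem:technic3}, and treating the endpoint by the time reversal of Proposition \ref{prop:timereversal}. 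Your route is viable in principle, but it concentrates all the difficulty in your Step 2, and that step as written has a genuine gap.

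Conditioning on $\{\Lambda(p)=n\}$ is not a ``reweighting by an asymptotically constant density'': the density of $\P[\,\cdot\,|\,W_p=-1,\Lambda(p)=n]$ with respect to $\P[\,\cdot\,|\,W_p=-1]$ is $1_{\{\Lambda(p)=n\}}/\P[\Lambda(p)=n\,|\,W_p=-1]$, a singular conditioning, and concluding that the functional limit is unchanged requires an asymptotic decoupling of the macroscopic path from $\Lambda(p)$. In the finite-variance case this decoupling fails at the level of fluctuations: the conditioning ties down the counting process $\Lambda(\lfloor pt\rfloor)$, whose centered fluctuations are of order $\sqrt{p}$, i.e. exactly of order $B_p$. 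Writing $W_k=-\Lambda(k)+S_{k-\Lambda(k)}$ with $S$ the partial sums of the nonnegative jumps, the doubly conditioned bridge is the superposition of a tied-down sampling-without-replacement bridge (variance $\mu_0(1-\mu_0)$ per unit time, entering with coefficient $1/(1-\mu_0)$) and an independent bridge of $W'$ (variance $\sp$ per unit time, run at speed $1-\mu_0$); one recovers the correct limiting variance only because $\mu_0/(1-\mu_0)+\sp(1-\mu_0)=\sigma^2$, by the identity $\sp=(\sigma^2-\mu_0)/(1-\mu_0)-\left(\mu_0/(1-\mu_0)\right)^2$ already used in the proof of Theorem \ref{thm:ltn}. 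Some such computation is indispensable and is absent from your argument; the conclusion happens to be right, but not for the reason you give. Separately, you assert convergence of the doubly conditioned bridge in $\D([0,1],\R)$ without addressing tightness near $t=1$, which is the entire point of the theorem; here you would need the time-reversal invariance of the doubly conditioned bridge (the paper's Proposition \ref{prop:timereversal}) to transfer tightness from the left endpoint, and this must be said explicitly.
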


As previously, to avoid further technicalities, we prove Theorem \ref {thm:cvW3} in the
case where $\Pmu[ \lt = n]>0$ for all $n$ sufficiently large. Throughout this section, $(W_n)_{n \geq 0}$ will stand for the random walk of Proposition \ref{prop:RW}. 
Introduce the following notation for $n \geq  0$ and $u \geq 0$:
\begin{equation}\label{eq:notations}\Lambda(n)=\sum_{j=0}^{n-1}1_{\{W_{j+1}-W_j=-1\}}, \,\,  T_ {u}=
\inf\{ k \geq 0; \, \Lambda(k)=\fl{u}\}, \,\, \z= \inf\{ k \geq 0;
\, W_k=-1\}.\end{equation}
For technical reasons, we put $B_u = B_ {\fl{u}}$ for $u \geq 1$.

\begin{lem}\label{lem:CVf2}The following properties hold.
\begin{enumerate}
\item[(i)]We have $ \d\Pr{
   \left|\frac{T_n}{n}-\frac{1}{\mu_0}\right|> \frac{1}{n^{1/4}} } =
  oe_{1/2}(n)$.
\item[(ii)]For every $a>0$,  $ \d \left(\frac{1}{B_{ {n}/{\mu_0} }} W_{ \fl{ \frac{t}{a} T_ {an} }}; \, 0 \leq t
\leq a\right) \qquad \mathop{\longrightarrow}^{(d)}_{n\rightarrow
\infty} \qquad (X_t; \, 0 \leq t \leq a)$ under $ \P$.
\end{enumerate}
\end{lem}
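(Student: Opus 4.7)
The plan is to obtain (i) by direct application of the Bernoulli large deviation bound (Lemma \ref{lem:phi}) to the counting process $\Lambda$, and then to deduce (ii) by combining (i) with the functional invariance principle for $W$ via the composition Lemma \ref{lem:CVproba}.

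For (i), the key observation is that under $\P$ the indicators $(1_{\{W_{j+1}-W_j=-1\}})_{j\geq 0}$ are i.i.d.\ Bernoulli of parameter $\nu(\{-1\})=\mu_0$, so that $\Lambda(k)$ is simply their partial sum of order $k$. Since $T_n$ is the first passage time of $\Lambda$ at level $n$, one has the identities
$$\{T_n > \lfloor n/\mu_0 + n^{3/4}\rfloor\} = \{\Lambda(\lfloor n/\mu_0 + n^{3/4}\rfloor) < n\}$$
and a symmetric identity for $\{T_n < \lceil n/\mu_0 - n^{3/4}\rceil\}$. Setting $k_n^\pm = \lfloor n/\mu_0 \pm n^{3/4}\rfloor$, a direct expansion gives $n/k_n^\pm = \mu_0 \mp c_\pm n^{-1/4} + o(n^{-1/4})$ for explicit $c_\pm > 0$. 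Lemma \ref{lem:phi}(i) then bounds each probability by $2\exp\!\bigl(-k_n^\pm \phi^\ast(\mu_0 \pm c'_\pm n^{-1/4})\bigr)$, and combining with the expansion $\phi^\ast(\mu_0 + x)\sim x^2/(2\mu_0(1-\mu_0))$ from Lemma \ref{lem:phi}(ii) together with $k_n^\pm \sim n/\mu_0$, the total bound is of order $\exp(-c'' n^{1/2})$, i.e.\ $oe_{1/2}(n)$.

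For (ii), I would set $m_n = \lfloor n/\mu_0\rfloor$ and define the rescaled walk $Z^n_s := B_{m_n}^{-1} W_{\lfloor m_n s\rfloor}$. The functional invariance principle for $W$ (the standard Donsker-type extension of Theorem \ref{thm:locallimit}(i), already implicit in Theorem \ref{thm:cvW}) yields $Z^n \to X$ in distribution in $\D([0,\infty),\R)$. The algebraic identity
$$\frac{1}{B_{n/\mu_0}} W_{\lfloor (t/a) T_{an}\rfloor} \;=\; K_n \cdot Z^n_{S_n t}, \qquad K_n := \frac{B_{m_n}}{B_{n/\mu_0}}, \quad S_n := \frac{T_{an}}{a\, m_n},$$
then reduces the problem to verifying that $K_n \to 1$ and $S_n \to 1$ in probability. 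The first follows from the regular variation of $B$ at index $1/\theta$ and $m_n/(n/\mu_0)\to 1$; the second follows by applying (i) with $n$ replaced by $\lfloor an\rfloor$, together with $\lfloor an\rfloor/(a m_n)\to \mu_0$. Since $X$ has no fixed times of discontinuity, it is a.s.\ continuous at the fixed time $a$, so Lemma \ref{lem:CVproba} applies and yields the convergence on $[0,a]$. The main bookkeeping subtlety lies in aligning the floor/ceiling indices in (i) with the Bernoulli rate statements of Lemma \ref{lem:phi}, and in matching $T_{an}$, $m_n$ and $B_{n/\mu_0}$ so that the left-hand side of the display above is literally of the form $K_n Z^n_{S_n t}$; beyond that, both steps are essentially off-the-shelf.
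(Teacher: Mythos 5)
Your proposal is correct and follows essentially the same route as the paper: part (i) rests on the Bernoulli large deviation bound of Lemma \ref{lem:phi} (the paper routes this through Lemma \ref{lem:GdesDev}(i), while you use the first-passage duality $\{T_n>k\}=\{\Lambda(k)<n\}$ directly, which amounts to the same estimate), and part (ii) is the stable Donsker invariance principle combined with the random time-change Lemma \ref{lem:CVproba}, exactly as the paper intends with its "easily follows". No gaps.
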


\begin{proof} The first assertion is an easy consequence of Lemma \ref{lem:GdesDev} (i). For (ii), we use a
generalization of Donsker's invariance theorem to the stable case, which states that 
$\left(W_{\fl{nt}}/B_n; \, t \geq 0\right)$ converges in distribution towards $  (X_t; \, t \geq 0)$ as $n \rightarrow \infty$. See e.g. \cite[Chapter VII]{Shir}. By (i), $T_n/n$
converges almost surely towards $1/\mu_0$, and (ii) easily follows.
\end{proof}

\subsection{The Vervaat transformation}

We introduce the Vervaat transformation, which will allow us to deal
with random paths with no positivity constraint. Recall the notation $ \bx^ {(i)}$ introduced in Section 1.3.

\begin{defn}\label{def:vervaat}Let $k \geq 1$ be an integer and let $\bx=(x_1,\ldots,x_{k}) \in
\Z^k$. Set $w_j=x_1+\cdots + x_j$ for $1 \leq j \leq k$ and
let the integer $i_*(\bx)$ be defined by $i_*(\bx) = \inf \{ j \geq 1; w_j= \min_{1 \leq i \leq k} w_i \}$.
The Vervaat transform of $\bx$ is defined as $\V(\bx)=\bx^{(i_*(\bx))}$.
\end{defn}

Also introduce the following notation for positive integers $k\geq
n$:
$$\S^{k}(n)=\{ (x_1,\ldots,x_{k}) \in \{-1,0,1,\ldots\} ; \,  \sum_{i=1}^{k} x_i=-1 \textrm{ and } \Card \{ 1 \leq i \leq k; \, x_i=-1 \}=n\},$$
as well as:
$$\Sb^{k}(n)=\{ (x_1,\ldots,x_{k}) \in \S^{k}(n) ; \, \sum_{i=1}^m x_i
>-1
\textrm{ for every } m \in \{1,2,\ldots,k-1\}\}.$$
Finally set $\Sb(n)=\cup_{k \geq n}\Sb^{k}(n)$.

\begin{lem}\label{lem:cool}Let $k \geq n$ be positive integers. Set $Z^k=(W_1,W_2-W_1,\ldots,W_{k}-W_{k-1})$.
\begin{enumerate}
\item[(i)] Conditionally on the event $\{W_k=-1\}$, the random variable $i_*(Z^k)$ is uniformly distributed
on $\{1,2,\ldots,k\}$ and is independent of $\V(Z^k)$.
\item[(ii)] Let $\bx \in \Sb^{k}(n)$. Then:
\begin{equation}\label{eq:v1}\Pr{\V(Z^k)=\bx, 
Z^k_k=-1}=\frac{n}{k}\Pr{\V(Z^k)=\bx}.\end{equation}
\end{enumerate}
\end{lem}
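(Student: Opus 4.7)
The plan is to combine the Cyclic Lemma, applied with $j=1$, with the cyclic exchangeability of $Z^k$ that follows from the fact that its coordinates are i.i.d. First I would observe that for any $\bx \in \Sb^k(n)$ the partial sums $w_m$ satisfy $w_m \geq 0$ for $1 \leq m \leq k-1$ and $w_k = -1$, so $i_*(\bx) = k$ (i.e.\ $\V(\bx) = \bx$). Since $\bx \in \S_k^{(1)}$, the Cyclic Lemma gives $\Card(\I_\bx) = 1$, so $\bx$ is the unique cyclic shift of itself that lies in $\Sb^k$. A standard consequence is that the cyclic orbit $\{\bx^{(i)} : i \in \Z/k\Z\}$ has size exactly $k$: if two distinct values $i_1, i_2 \in \Z/k\Z$ gave the same shift of $\bx$, each preimage of this common shift would produce a separate element of $\I_\bx$, contradicting $\Card(\I_\bx)=1$.

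Next I would establish the crucial identification: for every $\bx \in \Sb^k(n)$ and every $i \in \{1,\ldots,k\}$,
\[
\{\V(Z^k) = \bx,\ i_*(Z^k) = i\} \;=\; \{Z^k = \bx^{(-i)}\}.
\]
The inclusion $\supseteq$ follows because if $Z^k = \bx^{(-i)}$ then $(Z^k)^{(i)} = \bx \in \Sb^k$, and the uniqueness from the Cyclic Lemma forces $i_*(Z^k) = i$ and hence $\V(Z^k) = \bx$; the reverse inclusion is immediate from $\V(Z^k) = (Z^k)^{(i_*(Z^k))}$. Because the coordinates of $Z^k$ are i.i.d., $\bx^{(-i)}$ is a permutation of $\bx$, so
\[
\Pr{Z^k = \bx^{(-i)}} = \Pr{Z^k = \bx}\qquad\text{for every } i \in \{1,\ldots,k\}.
\]

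For (i), summing the displayed identity over $i \in \{1,\ldots,k\}$ gives $\Pr{\V(Z^k) = \bx} = k \, \Pr{Z^k = \bx}$, so each conditional probability $\Pr{i_*(Z^k) = i \mid \V(Z^k) = \bx}$ equals $1/k$. Observing that $\{\V(Z^k) \in \Sb^k\} = \{W_k = -1\}$ (the Cyclic Lemma produces a shift in $\Sb^k$ precisely when the total sum is $-1$), this yields both the uniform distribution of $i_*(Z^k)$ on $\{1,\ldots,k\}$ conditionally on $\{W_k=-1\}$ and its independence from $\V(Z^k)$.

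For (ii), I would count the indices $i \in \{1,\ldots,k\}$ such that $Z^k = \bx^{(-i)}$ satisfies $Z^k_k = -1$. Since the last coordinate of $\bx^{(-i)}$ is $x_{(k-i) \bmod k}$ (with the convention $0 \equiv k$), this amounts to counting $j \in \{1,\ldots,k\}$ with $x_j = -1$, which is exactly $n$. Summing the above identity over these $n$ values of $i$ yields $\Pr{\V(Z^k) = \bx,\ Z^k_k = -1} = n \, \Pr{Z^k = \bx}$, and combining with $\Pr{\V(Z^k) = \bx} = k \, \Pr{Z^k = \bx}$ gives the claimed factor $n/k$. The only genuinely delicate point is the identification in the second paragraph, which is essentially the content of the Cyclic Lemma in the form needed here; everything else is a counting argument combined with exchangeability.
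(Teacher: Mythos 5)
Your proof is correct and follows essentially the same route as the paper's: cyclic exchangeability of the i.i.d.\ increments plus the uniqueness statement of the Cyclic Lemma to identify $\{\V(Z^k)=\bx,\ i_*(Z^k)=i\}$ with $\{(Z^k)^{(i)}=\bx\}$, then a count of the $n$ positions of the $-1$ entries for part (ii). The only difference is cosmetic: your observation that the cyclic orbit of $\bx$ has size $k$ is correct but not needed, since the events indexed by $i$ are already disjoint by the value of $i_*$.
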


\begin{proof}The first assertion is a well-known fact, but we give a proof for the sake
of completeness. Let $\bx \in \Sb^{k}(n)$ with $ k \geq n$. Then: \begin{equation}
\label{eq:V}
\Pr{Z^k=\bx}=\frac{1}{k} \sum_{i=1}^k \Pr{
\left(Z^k\right)^{(i)}=\bx}= \frac{1}{k} \Pr{\V(Z^k)=\bx}.
\end{equation}
For the first equality, we have used the fact that $Z^k$
and $\left(Z^k\right)^{(i)}$ have the same law. The
second equality follows from the fact that by the Cyclic Lemma, there exists a unique $1 \leq i_* \leq k$ such
that $\left(Z^k\right)^{(i_*)}\in \cup_{n \geq 1}
\overline{\S}^{k}(n)$, which entails $\V(Z^k)=\left(Z^k\right)^{(i_*)}$.
Then, for $1 \leq i \leq k$: \begin{eqnarray*}\Pr{i_*(Z^k)=i,
\V(Z^k)=\bx}=\Pr{(Z^k) ^ {(i)}=\bx}
= \Pr{ Z^k=\bx}=\frac{1}{k} \Pr{\V(Z^k)=\bx}.\end{eqnarray*} The
conclusion follows.

For (ii), write $\bx=(x_1,\ldots,x_{k})$ and observe that
$$ \Pr{\V(Z^k)=\bx, Z^k_k=-1}=
\Pr{\V(Z^k)=\bx,
x_{k-i_*(Z^k)}=-1}.$$ The conclusion follows from (i)
since $\Card\{ 1 \leq i \leq k ; \, x_i=-1\}=n$.\end{proof}

\begin{prop}\label{prop:vervaatdiscrete} For every integer $n \geq 1$, the law of the vector $\V(W_1,W_2-W_1,\ldots,W_{ T_n}-W_{T_n-1})$ under $\Pr{ \, \cdot \, | \,
W_{T_n}=-1}$ coincides with the law of the vector $( \W_1(\tau),
\W_2(\tau)- \W_1(\tau), \ldots, \\ \W_{\zt}(\tau)- \W_{\zt-1}(\tau))$ under
$\Prmu{\, \cdot \, | \, \lt=n}$.
\end{prop}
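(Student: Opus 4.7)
The plan is to match the two laws pointwise on their common support $\Sb(n) = \bigcup_{k \geq n} \Sb^k(n)$ and then verify that the total masses agree, so that the conditional laws coincide. Both laws are manifestly supported on $\Sb(n)$: the Lukasiewicz increment vector of a tree with $n$ leaves lies in $\Sb^{\zt}(n)$, while the Vervaat transform sends any realization of $(W_1,\ldots,W_{T_n}-W_{T_n-1})$ with $W_{T_n}=-1$ into $\Sb^{T_n}(n)$ by construction.

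Fix $\bx=(x_1,\ldots,x_k)\in\Sb^k(n)$. On the tree side, Proposition~\ref{prop:RW} with $j=1$ identifies the increment vector of $\W(\tau)$ under $\Pmu$ with $(W_1,W_2-W_1,\ldots,W_\zeta-W_{\zeta-1})$, and since the underlying walk has i.i.d.\ increments of law $\nu$,
\[
\Prmu{\W(\tau)\text{ has increments }\bx} \;=\; \prod_{i=1}^k \nu(x_i) \;=\; \Pr{Z^k=\bx}.
\]
The event on the left automatically forces $\lt=n$ and $\zt=k$ because $\bx\in\Sb^k(n)$.

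On the Vervaat side, the event $\{\V(Z^{T_n})=\bx,\ W_{T_n}=-1\}$ forces the path length to be $k$, hence $T_n=k$; and $\{T_n=k,\ W_k=-1\}$ coincides with $\{Z^k\in\S^k(n),\ Z^k_k=-1\}$. Lemma~\ref{lem:cool}(ii) then yields
\[
\Pr{\V(Z^{T_n})=\bx,\ W_{T_n}=-1} \;=\; \Pr{\V(Z^k)=\bx,\ Z^k_k=-1} \;=\; \frac{n}{k}\,\Pr{\V(Z^k)=\bx},
\]
while the cyclic-invariance identity~\eqref{eq:V} from the proof of Lemma~\ref{lem:cool}(i) gives $\Pr{\V(Z^k)=\bx}=k\,\Pr{Z^k=\bx}$. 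Combining with the tree-side calculation,
\[
\Pr{\V(Z^{T_n})=\bx,\ W_{T_n}=-1} \;=\; n\,\Prmu{\W(\tau)\text{ has increments }\bx}.
\]
Summing over $\bx\in\Sb(n)$ produces the matching normalization $\Pr{W_{T_n}=-1}=n\,\Prmu{\lt=n}$, and dividing by these constants completes the identification of the two conditional laws.

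I do not expect a genuine obstacle: the argument is essentially bookkeeping that combines the Lukasiewicz/random-walk correspondence of Proposition~\ref{prop:RW} with the Vervaat identities in Lemma~\ref{lem:cool}. The only mild subtlety is that the path underlying $\V(Z^{T_n})$ has random length $T_n$, but this is handled cleanly by the observation that specifying $\bx\in\Sb^k(n)$ automatically pins down $T_n=k$, reducing the whole matter to a fixed-length computation.
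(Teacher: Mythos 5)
Your proposal is correct and follows essentially the same route as the paper: fix $\bx\in\Sb^k(n)$, note that the event pins down $T_n=k$ and reduces to $\{\V(Z^k)=\bx,\ Z^k_k=-1\}$, apply Lemma \ref{lem:cool} (ii) together with the cyclic identity \eqref{eq:V}, and obtain the normalization $\Pr{W_{T_n}=-1}=n\,\Prmu{\lt=n}$ by summing. The only cosmetic difference is that you compute the tree-side probability directly as $\Pr{Z^k=\bx}$ via the product formula, whereas the paper routes it through $\Pr{Z=\bx\,|\,Z\in\Sb(n)}$ using Proposition \ref{prop:RW}; both are valid.
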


\begin{proof}To simplify notation set
$Z=(W_1,W_2-W_1,\ldots,W_{T_n}-W_{T_n-1})$. Fix an integer $k \geq
n$, and set $Z^k=(W_1,W_2-W_1,\ldots,W_{k}-W_{k-1})$. Let $\bx=(x_1,\ldots,x_{k}) \in
\Sb^{k}(n)$. We have:$$\Pr{ \V(Z) = \bx
\, | \, W_{T_n}=-1}=\Pr{ \V(Z^k) = \bx , \, T_n=k \, | \,
W_{T_n}=-1}$$
simply because $Z=Z^k$ on the event $  \{T_n=k\}$. Then write:
\begin{eqnarray*}
\Pr{ \V(Z) =  \bx \,  | \, W_{T_n}=-1} &=&
\frac{\Pr{
\V(Z^k) =  \bx, T_n=k}}{\Pr{W_{T_n}=-1}} =\frac{\Pr{ \V(Z^k) =
\bx, Z^k_k=-1}}{\Pr{W_{T_n}=-1}} \\
&=& \frac{n}{k} \frac{\Pr{
\V(Z^k) = \bx}}{\Pr{W_{T_n}=-1}}  = n\frac{ \Pr{
Z^k = \bx}}{ \Pr{W_{T_n}=-1}}=n\frac{ \Pr{
Z = \bx}}{ \Pr{W_{T_n}=-1}}\\
&=& \frac{n \Pr{Z \in \Sb(n)}}{ \Pr{W_{T_n}=-1}} \Pr{ Z = \bx \, |
\, Z \in \Sb(n)},
\end{eqnarray*}
where we have used (\ref{eq:v1}) for the third equality and (\ref{eq:V}) for the fourth equality.
Summing over all possible $\bx \in\Sb ^ k(n)$ and then over $k \geq n$, we get $\Pr{W_{T_n}=-1}=n \Pr{Z \in \Sb(n)}$. As a consequence, we have $\Pr{ \V(Z) =
\bx \,  | \, W_{T_n}=-1}=\Pr{ Z = \bx \, | \, Z \in \Sb(n)}$ for every $ \bx \in  \Sb(n)$. 

On
the other hand, by Proposition \ref{prop:RW}, for every $ \bx \in  \Sb(n)$,
$$
\Prmu{( \W_1(\tau), \ldots,
\W_{\zt}(\tau)- \W_{\zt-1}(\tau))=\bx \, | \, \lt=n} = \Pr {Z^ \zeta = \bx \, | \, \Lambda( \zeta)=n}$$
where we have used the notation $ \zeta$ introduced in \eqref {eq:notations}. The probability appearing in the right-hand side is equal to $ \Pr{Z=\bx \, |
\, \, Z \in \Sb(n)}$ because $  \{ \Lambda ( \zeta)=n\}=  \{Z\in \Sb(n)\}$, and moreover we have $ \zeta= T_n$  and $Z^ \zeta=Z$ on this event. We conclude that:
\begin{eqnarray*} \Prmu{( \W_1(\tau), \ldots,
\W_{\zt}(\tau)- \W_{\zt-1}(\tau))=\bx \, | \, \lt=n}&=& \Pr{Z=\bx \, |
\, \, Z \in \Sb(n)} \\
&=& \Pr { \V(Z)= \bx | W_ {T_n}=-1}.
\end{eqnarray*}This completes the proof.
\end{proof}

\begin{defn}Set $\D_0([0,1],\R)= \{ \omega \in \D([0,1],\R) ; \, \omega(0)=0\}$. The Vervaat transformation in continuous time, denoted by $\Vc: \D_0([0,1],\R) \rightarrow
\D([0,1],\R)$, is defined as follows. For $\omega \in
\D_0([0,1],\R)$, set $g_1(\omega)=\inf\{ t \in [0,1]; \omega(t-)
\wedge \omega(t)= \inf_{[0,1]} \omega\}$. Then define:
$$\Vc(\omega)(t)=\begin{cases} \omega(g_1(\omega)+t)-\inf_{[0,1]}
\omega, \qquad \qquad  \qquad \qquad \quad \, \, \, \, \, \textrm{if }
g_1(\omega)+t \leq 1,
\\
\omega(g_1(\omega)+t-1)+\omega(1)-\inf_{[0,1]} \omega \qquad \quad
\qquad \textrm{  if } g_1(\omega)+t \geq 1.
\end{cases}$$\end{defn}

\begin{cor}\label{cor:vervaatC}The law of $\left( \frac{1}{B_{\zt} } \W_{ \lfloor
\zt t \rfloor }( \tau);\, 0 \leq t \leq 1\right)$ under $\Pmuln$ coincides with the law of
$\Vc\left( \frac{1}{B_{T_n} } W_{ \lfloor T_n t \rfloor };\, 0
\leq t \leq 1\right)$ under $\Pr{ \, \cdot \, | \, W_{T_n}=-1}$.
\end{cor}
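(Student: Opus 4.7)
My plan is to derive the corollary directly from the discrete identity of Proposition \ref{prop:vervaatdiscrete}, via a purely deterministic check that the continuous Vervaat transformation $\Vc$ acts on a rescaled c\`adl\`ag step function in the same way as the discrete Vervaat transform $\V$ acts on its increments.

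First I would unpack Proposition \ref{prop:vervaatdiscrete}: it is an identity in law between two random integer-valued increment sequences of random length, which in particular forces the lengths $T_n$ (under $\Pr{\, \cdot \, | \, W_{T_n}=-1}$) and $\zt$ (under $\Pmuln$) to have the same distribution. Since the rescaled c\`adl\`ag path $\left(\W_{\fl{\zt t}}(\tau)/B_{\zt};\, 0 \leq t \leq 1\right)$ is a deterministic function of the increment vector together with its length, this lifts without further work to an equality in law between the rescaled Lukasiewicz path under $\Pmuln$ and the rescaled step function built from the shifted vector $\V(W_1, W_2-W_1, \ldots, W_{T_n}-W_{T_n-1})$ under $\Pr{\, \cdot \, | \, W_{T_n}=-1}$.

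It then remains to verify the following deterministic compatibility statement: for any integer $k \geq 1$ and any finite real sequence $(x_1, \ldots, x_k)$ with partial sums $w_j$, the rescaled step function built from the shifted increments $\V(x_1, \ldots, x_k)$ coincides on $[0,1]$ with $\Vc$ applied to the rescaled step function $f(t) = w_{\fl{kt}}/B_k$ of the original partial sums. The key observation here is that, writing $i_* = i_*(x_1, \ldots, x_k)$ for the index from Definition \ref{def:vervaat}, the infimum of $f$ is first attained at $t = i_*/k$, so $g_1(f) = i_*/k$; plugging this into the piecewise definition of $\Vc$ and separating the cases $t \leq (k-i_*)/k$ and $t \geq (k-i_*)/k$ then recovers, in each regime, exactly the rescaled partial sums of the cyclically shifted increments.

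Applying this pointwise identity to the walk $W$ stopped at $T_n$ under $\Pr{\, \cdot \, | \, W_{T_n}=-1}$ and combining with the first paragraph yields the corollary. I do not anticipate any genuine obstacle here: the only care required is the bookkeeping of c\`adl\`ag conventions at the junction $t = (k-i_*)/k$ and at the right endpoint $t=1$, but since everything takes place at the level of finite-length step functions on which both Vervaat transformations reduce to the same cyclic rearrangement of increments, this verification is routine.
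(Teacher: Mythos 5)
Your proposal is correct and is exactly the argument the paper intends: the paper dismisses this corollary with "This immediately follows from Proposition \ref{prop:vervaatdiscrete}", and your write-up simply makes explicit the routine deterministic check that $\Vc$ applied to the rescaled partial-sum step function equals the rescaled partial-sum step function of the discretely shifted increments (using that the terminal value $-1$ guarantees the minimum is attained after time $0$, so $g_1(f)=i_*/k$).
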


This immediately follows from  Proposition
\ref{prop:vervaatdiscrete}. In the next subsections, we first get a limiting result under $\Pr{ \, \cdot \, | \, W_{T_n}=-1}$ and then apply the Vervaat transformation using the preceding remark.
The advantage of dealing with $\Pr{ \, \cdot \, | \, W_{T_n}=-1}$
is to avoid any positivity constraint.

\subsection{Time Reversal}

The probability measure $\Pr{ \, \cdot
\, | \, W_{T_n}=-1}$ enjoys a time-reversal invariance property that will be useful in our applications.
Ultimately, as for the height and contour processes, this
time-reversal property will allow us to get the convergence of
rescaled Lukasiewicz paths over the whole segment $[0,1]$.

\begin{prop}\label{prop:timereversal} Fix two integers $m \geq n \geq 1$ such that $ \Pr {W_m=0, \Lambda(m)=n}>0$. For $ 0 \leq i \leq m$, set $\widehat{W}^{(m)}_i=W_{m}-W_{m-i}$. The law of the vector
$\left(W_0,\ldots,W_{m}\right)$ under $\Pr{ \, \cdot \, | \, W_{m}=0, \Lambda(m)=n}$ coincides with the law of the vector $\left(\widehat{W}^{(m)}_0,\ldots,\widehat{W}^{(m)}_{m}\right)$ under the same probability measure.
\end{prop}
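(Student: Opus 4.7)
The plan is to reduce the statement to a simple exchangeability argument on the increments. Write $X_i = W_i - W_{i-1}$ for $1 \leq i \leq m$, so that under $\P$ the random variables $X_1, \ldots, X_m$ are i.i.d.\ with distribution $\nu$. A direct computation gives $\widehat{W}^{(m)}_i - \widehat{W}^{(m)}_{i-1} = X_{m-i+1}$, so that time-reversal of the path corresponds to reversal of the increment sequence: $(X_1, \ldots, X_m) \mapsto (X_m, \ldots, X_1)$.

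The key observation is that the conditioning event is symmetric in the increments. Indeed, $\{W_m = 0\} = \{X_1 + \cdots + X_m = 0\}$ and $\{\Lambda(m) = n\} = \{\mathrm{Card}\{1 \leq i \leq m : X_i = -1\} = n\}$ depend only on the multiset $\{X_1, \ldots, X_m\}$, hence are invariant under any permutation of the coordinates. In particular, they are invariant under the reversal $\sigma : (x_1, \ldots, x_m) \mapsto (x_m, \ldots, x_1)$.

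Since $(X_1, \ldots, X_m)$ is exchangeable under $\P$ (being i.i.d.), we have for every Borel set $A \subset \R^m$:
$$\P[(X_1, \ldots, X_m) \in A,\, W_m = 0,\, \Lambda(m) = n] = \P[(X_m, \ldots, X_1) \in A,\, W_m = 0,\, \Lambda(m) = n],$$
the identity following by applying exchangeability together with the $\sigma$-invariance of the conditioning event. Dividing by $\P[W_m = 0, \Lambda(m) = n] > 0$ yields equality of the conditional laws of $(X_1, \ldots, X_m)$ and $(X_m, \ldots, X_1)$, and hence of the conditional laws of the corresponding partial-sum processes $(W_0, \ldots, W_m)$ and $(\widehat{W}^{(m)}_0, \ldots, \widehat{W}^{(m)}_m)$.

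There is no real obstacle here: the proof is almost a one-liner once one identifies that time-reversal of the walk is exactly reversal of the i.i.d.\ increment sequence and that the conditioning event is symmetric. The only thing to be slightly careful about is the role of the endpoint: we condition on $W_m = 0$ (not on $W_m = -1$ as in the Lukasiewicz path setting), so that reversing increments does preserve the endpoint condition, which is essential for the symmetry to hold.
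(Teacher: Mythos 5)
Your proof is correct: identifying time-reversal of the path with reversal of the i.i.d.\ increment sequence, and noting that the event $\{W_m=0,\,\Lambda(m)=n\}$ is a symmetric function of the increments, is exactly the standard argument. The paper itself leaves this proposition as an exercise, so your write-up is precisely the intended solution and there is nothing to compare beyond that.
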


\begin{proof}This is left as an exercise.\end{proof}

\subsection{The Lévy Bridge}

The Lévy bridge $X^\br$
can be seen informally as the path $(X_t; \, 0 \leq t \leq 1)$
conditioned to be at level zero at time one. See \cite[Chapter
VIII]{Bertoin} for definitions.

\begin{prop}\label{prop:bridge}The following two properties hold.\begin{enumerate}
\item[(i)] The continuous Vervaat transformation $\Vc$ is almost
surely continuous at $X^\br$ and $\Vc(X^\br)$ has the same distribution as $X$ under  $\Nn( \, \cdot \, | \, \z=1)$.
\item[(ii)] Fix $a \in (0,1)$. Let $F$ be a bounded continuous functional on
$\D([0,a],\R)$. We have:
$$\Es{F\left( X^{\br}_t; \, 0 \leq t \leq a \right)}=\Es{F\left( X_t; \, 0 \leq t \leq a \right)
\frac{p_{1-a}(-X_a)}{p_1(0)}}.$$ \end{enumerate}
\end{prop}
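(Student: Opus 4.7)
My overall strategy is to treat (ii) first as a direct computation based on the Markov property, and then to use it (and classical facts about spectrally positive Lévy processes) to handle (i). For (ii), I would start from the fact that $X$ has independent and stationary increments, so the joint distribution of $(X_a,X_1)$ admits the density $(x,y)\mapsto p_a(x)\,p_{1-a}(y-x)$ with respect to Lebesgue measure on $\R^2$. Disintegrating this joint law along $X_1$ gives the regular conditional density of $X_a$ given $X_1=0$ as $x\mapsto p_a(x)\,p_{1-a}(-x)/p_1(0)$. By the Markov property at time $a$, this extends to the whole path $(X_t)_{0\le t\le a}$, so that for every bounded continuous $F$ on $\D([0,a],\R)$,
$$\Es{F\bigl((X_t)_{0\le t\le a}\bigr)\,\Big|\,X_1=0}\;=\;\Es{F\bigl((X_t)_{0\le t\le a}\bigr)\,\frac{p_{1-a}(-X_a)}{p_1(0)}}.$$
The only technical point is to give meaning to conditioning on the null event $\{X_1=0\}$, which I would do by regularizing to $\{|X_1|<\varepsilon\}$, dividing by $\Pr{|X_1|<\varepsilon}$ and letting $\varepsilon\to 0$, using the continuity of $p_1$ at $0$ and of $p_{1-a}$ on compact sets, together with boundedness of $F$. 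This is exactly how $X^{\br}$ is defined in \cite[Chapter VIII]{Bertoin}, so the formula follows.

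For (i), the plan splits into two independent steps. First, I would show that $\mathcal{V}$ is almost surely continuous at $X^{\br}$ in the Skorokhod topology. The key properties to verify are that $X^{\br}$ attains its infimum on $[0,1]$ at a unique time $g_1\in(0,1)$, and that $X^{\br}$ is left-continuous at $g_1$ with $X^{\br}_{g_1-}=\inf_{[0,1]}X^{\br}$. Both properties are classical for the free process $X$ (uniqueness of the infimum time follows from absolute continuity of the one-dimensional distributions together with the strong Markov property, and the absence of negative jumps forces the infimum to be attained continuously). These properties transfer to $X^{\br}$ via the absolute continuity relation of (ii) on any sub-interval $[0,a]\subset[0,1)$, together with a symmetric time-reversal argument to handle a neighbourhood of $t=1$. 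Standard continuity results for the Vervaat functional (see e.g. the discussion of the Vervaat map in \cite[Chapter VIII]{Bertoin}) then give the a.s.\ continuity of $\mathcal{V}$ at $X^{\br}$.

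The second step of (i) is the distributional identification $\mathcal{V}(X^{\br})\stackrel{(d)}{=}X$ under $\Nn(\cdot\,|\,\zeta=1)$. This is the Vervaat–Chaumont theorem for spectrally positive stable Lévy processes, which I would invoke directly from the literature (see \cite[Chapter VIII]{Bertoin} for the statement and proof in precisely this setting). Alternatively, one could deduce it from the discrete Vervaat identity in Corollary \ref{cor:vervaatC} combined with a Donsker-type invariance principle for random walk bridges, passing to the limit in both sides; this route is heavier but avoids citing an external result.

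The main obstacle is the distributional identification in the second step of (i): it is the one point where none of the elementary computations of the paper suffices, and one has to rely on a non-trivial classical theorem about Lévy excursion theory. The a.s.\ continuity of $\mathcal{V}$ at $X^{\br}$ and the Radon–Nikodym formula in (ii) are technically routine once the Markov property and the regularity of $p_1$ are in hand.
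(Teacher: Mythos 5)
Your proposal is correct and follows essentially the same route as the paper, which simply cites the literature for all three points: the bridge absolute-continuity formula is \cite[Formula (8), Chapter VIII.3]{Bertoin} (your Markov-property derivation is the standard proof of that formula), the continuity of $\Vc$ at $X^{\br}$ is justified exactly as you do via the a.s.\ uniqueness of the time of the minimum, and the identification $\Vc(X^{\br})\stackrel{(d)}{=}X$ under $\Nn(\,\cdot\,|\,\z=1)$ is quoted from Chaumont \cite[Theorem 4]{Chaumont} rather than from Bertoin's book, which is the more accurate reference for the spectrally positive stable case.
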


\begin{proof}The continuity of $\Vc$ at $X^\br$ follows from the
fact that the absolute minimum of $X^\br$ is almost surely attained
at a unique time. See \cite[Theorem 4]{Chaumont} for a proof of the
fact that $\Vc(X^\br)$ has the same distribution as $X$ under $\Nn(
\, \cdot \, | \z=1)$. For (ii), see \cite[Formula (8), chapter
VIII.3]{Bertoin}.
\end{proof}

\subsection{Absolute continuity and convergence of the Lukasiewicz path}

By means of a discrete absolute continuity argument similar to the
one used in Section 5, we shall show that for every $a \in (0,1)$ the law of $\left( \frac{1}{B_{T_n} }
W_{ \lfloor T_n t \rfloor };\, 0 \leq t \leq a\right)$  under
$\Pr{ \, \cdot \, | \, W_{T_n}=-1}$ converges to the law of $(X^\br_t, 0 \leq t \leq a)$.

\begin{lem}\label{lem:absolueC2} Fix $a \in (0,1)$ and let $n$ be a positive integer. To simplify notation, set
$W ^ {(u)}=(W_0, W_1,\ldots, W_ {T_ { \fl {u}}})$ for $u \geq 0$. For every function $f : \cup_ {i \geq 1} \Z^i \rightarrow \R_+$
 we have:
\begin{eqnarray*}
\Es{f(W^{(an)}) \,| \, W_{T_n} =- 1}=
\Es{f(W^{(an)}) \frac{\chi_{n- \fl{an}}(W_{T_  {an}})}{\chi_n(0)}},
\end{eqnarray*}
where $\chi_k(j)=\P_j[W_{T_k}=-1]$ for every $j \in \Z$ and $k \geq 1$, and $W$ starts from $j$ under the probability measure $ \P_j$.
\end{lem}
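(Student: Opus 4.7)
The plan is to obtain this identity as a direct application of the strong Markov property of $W$ at the stopping time $T_{\lfloor an \rfloor}$. Observe first that $T_{\lfloor an \rfloor}$ is indeed a stopping time for the natural filtration of $W$, since it is the first instant at which the running count $\Lambda$ of $-1$-jumps reaches the integer $\lfloor an \rfloor$. Since $a \in (0,1)$, we have $\lfloor an \rfloor < n$ (for $n$ large enough, but the formula is trivial otherwise), and on the event of interest $\{W_{T_n}=-1\}$ the stopping time $T_{\lfloor an \rfloor}$ is almost surely finite.

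Next I would decompose the event $\{W_{T_n}=-1\}$ according to the value of $W_{T_{\lfloor an \rfloor}}$. The key observation is that, conditionally on $\mathcal{F}_{T_{\lfloor an \rfloor}}$, by the strong Markov property the process $(W_{T_{\lfloor an \rfloor}+k}-W_{T_{\lfloor an \rfloor}}; k \geq 0)$ is an independent copy of $W$ started from $0$, and the condition $W_{T_n}=-1$ translates into the condition that this shifted walk, started from the position $W_{T_{\lfloor an \rfloor}}$, reaches $-1$ at the instant it accumulates its $(n-\lfloor an \rfloor)$-th $-1$-jump. By definition of $\chi_k(j) = \P_j[W_{T_k}=-1]$, this conditional probability equals $\chi_{n-\lfloor an \rfloor}(W_{T_{\lfloor an \rfloor}})$. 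Moreover $W^{(an)}$ is entirely measurable with respect to $\mathcal{F}_{T_{\lfloor an \rfloor}}$.

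Putting this together, for any bounded nonnegative measurable $f$ on $\cup_i \Z^i$,
$$
\E\bigl[f(W^{(an)})\,1_{\{W_{T_n}=-1\}}\bigr]
= \E\Bigl[f(W^{(an)})\,\chi_{n-\lfloor an \rfloor}(W_{T_{\lfloor an \rfloor}})\Bigr].
$$
Taking $f \equiv 1$ gives $\P[W_{T_n}=-1] = \E[\chi_{n-\lfloor an \rfloor}(W_{T_{\lfloor an \rfloor}})]$, which in particular equals $\chi_n(0)$ by the very same Markov argument applied with $\lfloor an \rfloor=0$. Dividing the displayed identity by $\chi_n(0)=\P[W_{T_n}=-1]$ yields the claimed formula.

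There is no real obstacle: the only point requiring a moment of care is ensuring that the three relevant quantities are all expressed in terms of the same stopping time $T_{\lfloor an \rfloor}$ (so that $W^{(an)}$, $W_{T_{\lfloor an \rfloor}}$, and the residual count $n-\lfloor an \rfloor$ line up consistently), after which the strong Markov property does all the work. This is the direct analogue in the random-walk setting of Proposition~\ref{prop:abscon}, and indeed the same argument that underlies \cite[Lemma 10]{LGM}; I would simply refer to that proof for the details.
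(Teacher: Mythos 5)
Your argument is exactly the paper's proof: the paper disposes of this lemma in one line by invoking the strong Markov property of $W$ at the stopping time $T_{\fl{an}}$, and your write-up correctly fills in the details (measurability of $W^{(an)}$ with respect to $\F_{T_{\fl{an}}}$, identification of the conditional probability as $\chi_{n-\fl{an}}(W_{T_{\fl{an}}})$, and normalization by $\chi_n(0)=\Pr{W_{T_n}=-1}$).
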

\begin{proof}This follows from the strong Markov property for the random walk $W$.\end{proof}

\begin{lem}\label{lem:technic3}For every $\a>0$, we have $ \d \lim_{n \rightarrow \infty} \sup_{ |j| \leq \a B_n}
\left|B_{{n}/{\mu_0}} \chi_n(j)-
p_1\left( -
\frac{j}{B_{{n}/{\mu_0}}}\right)
\right|=0$.
\end{lem}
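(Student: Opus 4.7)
The argument runs parallel to the proof of Lemma \ref{lem:technical} (ii) given in Section \ref{sec:technik}. First I would establish a Kemperman-style formula for $\chi_n(j)$: by conditioning on $T_n=p$ (so the $p$-th step of $W$ must be a $-1$-jump) and applying \eqref{eq:LambdaUtile} together with the elementary identity $\mu_0\P[S_{p-1}=n-1]=(n/p)\P[S_p=n]$, one obtains
\begin{equation*}
\chi_n(j) \;=\; \sum_{p\geq n}\frac{n}{p}\,\P[S_p=n]\,\P[W'_{p-n}=n-1-j],
\end{equation*}
valid for every integer $j$. This has the same structure as the formula of Proposition \ref{prop:K}, with the factor $j/p$ there replaced here by $n/p$. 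In particular it implies the algebraic identity $(j+1)\chi_n(j)=n\,\psi_n(j+1)$ for every $j\geq 0$.

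For the range $0\leq j\leq \alpha B_n$ the claim follows from this identity combined with Lemma \ref{lem:technical} (ii): that lemma gives $(j+1)\chi_n(j)=q_1((j+1)/B_{n/\mu_0})+o(1)$ uniformly (recall $q_1(x)=xp_1(-x)$), and dividing by $j+1$ and invoking the Lipschitz continuity of $p_1$ (which absorbs the shift from $j+1$ to $j$ at a cost of $O(1/B_{n/\mu_0})=o(1)$) yields the desired limit $B_{n/\mu_0}\chi_n(j)\to p_1(-j/B_{n/\mu_0})$.

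For the remaining range $-\alpha B_n\leq j<0$ the algebraic identity is unavailable, so I would replay the analytic argument of Section \ref{sec:technik} directly on the displayed formula: use Lemma \ref{lem:CVf2} (i) to restrict the summation to $|p-n/\mu_0|\leq \epsilon n$ with an $oe_{1/2}(n)$ remainder uniform in $j$; perform the substitution $p=r_n(u)=\lfloor n/\mu_0+u\sqrt n\rfloor$ to express the remaining sum as a Riemann approximation to an integral involving the quantities $a_n(u)$ and $b_n(u,j+1)$ from Section \ref{sec:technik}; apply Lemma \ref{lem:techniqueutile} together with dominated convergence (the uniform bounds $a_n(u)\leq C(1\wedge u^{-2})$ and $|b_n(u,k)|\leq C$ of Section \ref{sec:technik} remain valid) to obtain the explicit integral representation $\mu_0\int F_0(u)G_0(u,(j+1)/B'_n)\,du$. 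Evaluating this integral exactly as in the two cases of Section \ref{sec:technik}, and invoking $B'_n\sim c^{1/\theta}B_{n/\mu_0}$ (Lemma \ref{lem:hhp}, or direct in the finite variance case) together with the continuity of $p_1$, finishes the proof. The main obstacle is extending Lemma \ref{lem:techniqueutile} from $1\leq k\leq \alpha B_n$ to $|k|\leq \alpha B_n$; this is routine since the proof of that lemma is sign-insensitive in its second argument.
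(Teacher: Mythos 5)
Your representation formula $\chi_n(j)=\sum_{p\geq n}\frac{n}{p}\,\P[S_p=n]\,\P[W'_{p-n}=n-1-j]$ is correct (it is the same identity the paper reaches by writing $\P_j[W_{T_n}=-1]=\mu_0\sum_k\P[W_{k-1}=-j,\Lambda(k-1)=n-1]$ and applying \eqref{eq:LambdaUtile}, since $\mu_0\P[S_{p-1}=n-1]=\tfrac{n}{p}\P[S_p=n]$), and the analytic argument you describe for the range $-\a B_n\leq j<0$ — restriction of the sum via Lemma \ref{lem:CVf2} (i), passage to an integral in the variable $u$, Lemma \ref{lem:techniqueutile} plus dominated convergence, evaluation of the limiting integral, and $B'_n/B_{n/\mu_0}\to c^{1/\theta}$ — is exactly the paper's proof. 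The paper runs that argument once, uniformly over all $|j|\leq\a B_n$, without any case split; as you observe, nothing in it uses the sign of $j$, and the extension of Lemma \ref{lem:techniqueutile} to second arguments $j+1\leq 0$ is indeed the only point requiring (routine) care, a point the paper itself glosses over.

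However, your shortcut for the range $0\leq j\leq\a B_n$ has a genuine gap. The identity $(j+1)\chi_n(j)=n\psi_n(j+1)$ is correct, but Lemma \ref{lem:technical} (ii) only gives $\sup_{1\leq k\leq\a B_n}\left|n\psi_n(k)-q_1(k/B_{n/\mu_0})\right|=\e_n$ with $\e_n\to0$ at an unspecified rate. Dividing by $j+1$ and multiplying by $B_{n/\mu_0}$ turns this into
$$\left|B_{n/\mu_0}\chi_n(j)-p_1\left(-\tfrac{j+1}{B_{n/\mu_0}}\right)\right|\leq \frac{B_{n/\mu_0}}{j+1}\,\e_n,$$
and the right-hand side is not $o(1)$ uniformly: for $j$ of order $1$ (or more generally $j=o(B_n\e_n)$) it may blow up. Absorbing the shift from $j+1$ to $j$ is harmless, but recovering the pointwise density estimate from the tail-type estimate of Lemma \ref{lem:technical} (ii) by division is not, absent a quantitative rate. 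The fix is immediate: drop the case split and run your second (analytic) argument over the whole range $|j|\leq\a B_n$, which is precisely what the paper does.
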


 Note that $j$
is allowed to take negative values. Note also that Lemma \ref {lem:technic3} implies that
$\chi_n(0) \sim
{p_1(0)}/{B_{{n}/{\mu_0}}}$ as $n \rightarrow \infty$.

\begin{proof} Fix $\e \in (0,1)$. Using Lemma, \ref{lem:CVf2} (i), we have:
\begin{eqnarray*} \chi_n(j)=\P_ {j}[W_{T_n}=-1]&=& \Pr {W_ {T_n}=-j-1}\ \\
&=& \P\left[W_{T_n}=-j-1,
\left|T_n-\frac{n}{\mu_0}\right|\leq
 \e n\right]+oe_{1/2}(n)\\
&=& \sum_{\left| k - {n}/{\mu_0} \right| \leq \e n}
\P\left[W_{k}=-j-1, T_n=k\right] +oe_{1/2}(n)\\
&=& \mu_0 \sum_{\left| k - {n}/{\mu_0} \right| \leq \e n}
\P\left[W_{k-1}=-j, \Lambda(k-1)=n-1\right] +oe_{1/2}(n)
\end{eqnarray*}

Recall that $S_n$ stands for the sum of $n$ iid Bernoulli random
variables of parameter $\mu_0$ and that $W'$ is the random walk $W$
conditioned on having nonnegative jumps.  By \eqref{eq:LambdaUtile}:
\begin{eqnarray*}&&\P_ {j}[W_{T_n}=-1]\\
 &&\qquad  =  \mu_0 \sum_{\left| k - {n}/{\mu_0} \right|
\leq \e n}
 \P[S_{k-1}=n-1]\P\left[W'_{k-n}=n-j-1\right] +oe_{1/2}(n)
 \\
 && \d \qquad =  \mu_0     \d \int_ {n/ \mu_0 - \epsilon n + O(1)} ^{n/ \mu_0 + \epsilon n + O(1)} dx \,  \Pr {S_ { \fl {x-1}} =n-1} \Pr {W'_ { \fl {x}-n}=n-j-1} +oe_{1/2}(n). \\
 &&\qquad  = \mu_0 \int_{- \e \sqrt{n}+o(1)} ^{\e \sqrt{n}+o(1)} du \, \sqrt{n} \, \Pr{S_{\fl{\frac{n}{\mu_0}+u
 \sqrt{n}-1}}=n-1}  \P\left[W'_{\fl{\frac{n}{\mu_0}+u
 \sqrt{n}}-n}=n-j-1\right]+oe_{1/2}(n).
 \end{eqnarray*}
 For $ |u| \leq \epsilon \sqrt {n}$, set:
 $$ \widetilde{a}_n(u)=\sqrt{n} \, \Pr{S_{\fl{\frac{n}{\mu_0}+u
 \sqrt{n}-1}}=n-1}.$$
 Using the notation of Section \ref{sec:technik}, we have then:
 $$B'_n \P_ {j}[W_{T_n}=-1]= \mu_0 \int_{- \e \sqrt{n}+o(1)} ^{\e \sqrt{n}+o(1)} du \,  \widetilde{a}_n(u) b_n(u,j+1)+oe_{1/2}(n).$$
 The same argument that led us to \eqref{eq:lemtechnique0} gives that:
 $$\sup_{ |j| \leq \a B_n} \left| B'_n \P_ {j}[W_{T_n}=-1]-  c^ {1/ \theta}p_1\left(-\frac{j}{B_{n/\mu_0}}\right) \right| \quad 
 \mathop{\longrightarrow}_{n \rightarrow \infty} \quad 0.$$
The conclusion follows from that fact that $B'_n  / B_ {n/ \mu_0} \rightarrow c^ {1/ \theta}$.\end{proof}

All the necessary ingredients have been gathered and we can now turn to the
proof of the theorem.

\begin{proof}[Proof of Theorem \ref{thm:cvW3}]  Let $F: \D([0,a],\R) \rightarrow \R_+$ be a  bounded continuous function. Fix $a \in (0,1)$ and $\a>0$. To
simplify notation, set $A^\a_n= \left\{ \left|W_{ T_ {an}}\right| \leq
\a B_{n/ \mu_0} \right\}$ and $G^ {(n)}(W)=F\left( \frac{1}{B_{n/ \mu_0}} W_{ \fl{ \frac{t}{a} T_ {an} }}; \, 0 \leq t \leq
a\right)$. We apply Lemma \ref{lem:absolueC2} with $f(W_0,W_1, \ldots, W_ { T_  {an}})=  G^ {(n)}(W) 1_{ {A^\a_n}}$ and get:
$$\Es{G^ {(n)}(W) 1_{ {A^\a_n}} \, | \, W_{T_n}=-1}
= \Es {G^ {(n)}(W) 1_{ {A^\a_n}} \frac{\chi_{n-\fl{an}}(W_{T_ {an}})}{\chi_n(0)}}.$$
Lemma \ref{lem:technic3} then entails:
$$\lim_{n \rightarrow \infty}
\left|\Es{G^ {(n)}(W) 1_{ {A^\a_n}} \, | \, W_{T_n}=-1}
- \Es {G^ {(n)}(W) 1_{ {A^\a_n}}  \frac{(1-a)^ {-1/ \theta}}{p_1(0)} p_1 \left( - \frac{W_{T_ {an}}}{B_ {(n- \fl {an})/ \mu_0}}\right) } \right|=0$$
 From Lemma \ref{lem:CVf2} (ii), we deduce that:
$$\lim_{n \rightarrow \infty}
\Es{G^ {(n)}(W) 1_{ {A^\a_n}} \, | \, W_{T_n}=-1}
= \Es {F((X_t)_ {0 \leq t \leq a})1_{ \{ |X_a|< \a\}}  \frac{(1-a)^ {-1/ \theta}}{p_1(0)} p_1\left(-\frac{X_a}{(1-a)^{1/\theta}}\right) }.$$
By (\ref{eq:scaling}), we have ${(1-a)^{-1/\theta}}
p_1\left(-\frac{X_a}{(1-a)^{1/\theta}}\right)= p_{1-a}(-X_a)$.
Consequently, by Proposition \ref{prop:bridge} (ii), we conclude
that:
\begin{equation}
\label{eq:cvBr}\lim_{n \rightarrow \infty}
\Es{G^ {(n)}(W) 1_{ {A^\a_n}} \, | \, W_{T_n}=-1}
= \Es{F(X^\br_t; \ 0 \leq t \leq a) 1_{\{
|X^\br_a| \leq \a\}}}.\end{equation}
 By taking $F\equiv 1$, we obtain: \begin{equation}\label{eq:trucsuperutile}\lim_{\a \rightarrow
\infty }\lim_{n \rightarrow \infty} \Pr{A^\a_n \, | \,
W_{T_n}=-1}=1.\end{equation}
By choosing $ \a>0$ sufficiently large, we easily deduce from the convergence \eqref{eq:cvBr} that:
\begin{eqnarray*}\lim_{n \rightarrow \infty}
\left|\Es{F\left(\frac{1}{B_{n/ \mu_0}}
W_{\fl{\frac{t}{a} T_ {an} }}; \, 0 \leq t \leq a\right)  \, | \,
W_{T_n}=-1}-\Es{F(X^\br_t; \ 0 \leq t \leq a)} \right|&=&0.
\end{eqnarray*}
Next write: $$\frac{1}{B_{T_n} } W_{ \lfloor T_n t\rfloor }=K_n \cdot \frac{1}{B_{n/ \mu_0}}
W_{\fl{S_n  \cdot \frac{t}{a} T_{an}}},$$
where $K_n=B_{n/ \mu_0}/B_{T_n} $ and $S_n=a T_n / T_{an}$. Lemma \ref{lem:CVf2} (i) entails that $K_n$ and $S_n$ both converge in probability towards $1$. Lemma \ref{lem:CVproba} then implies that  the law of $
\left(\frac{1}{B_{T_n} } W_{ \lfloor T_n t \rfloor }; 0 \leq t \leq a \right)$ under $\Pr{ \, \cdot \, | \, W_{T_n}=-1}$ converges to the law of $(X^\br_t, 0 \leq t \leq a)$,  and this holds for every $a \in (0,1)$. 

We now show that the latter convergence holds also for $a=1$ by using a time-reversal argument based on Proposition \ref{prop:timereversal}. By the usual tightness criterion
(see e.g. \cite[Formula (13.8)]{Bill}), it is sufficient to show that, for every $ \eta>0$,
\begin{equation}\label{eq:montrertension}\lim_{\delta \rightarrow 0} \limsup_{n
\rightarrow \infty} \Pr{ \left. \sup_{s \in [1- \delta,1)}
\left|\frac{1}{B_{T_n} } W_{ \lfloor T_n s \rfloor }\right| > \eta
\, \right| \, W_{T_n}=-1}=0. \end {equation} Note that: $$\sup_{s \in [1- \delta,1)}
\left|\frac{1}{B_{T_n} } W_{ \lfloor T_n s \rfloor }\right|=\sup_{
\fl{(1-\delta)T_n} \leq k \leq T_n-1 } \left|\frac{1}{B_{T_n} }
W_{ k }\right|.$$ Using this remark, we  write:
\begin{eqnarray*}
&&\Pr{ \left. \sup_{s \in [1- \delta,1)} \left|\frac{1}{B_{T_n} } W_{
\lfloor T_n s \rfloor }\right| > \eta \, \right| \,
W_{T_n}=-1}\\
&& \qquad  = \frac{1}{\Pr{W_{T_n}=-1}} \sum_{k=n}^{\infty} \Pr{
T_n=k, W_k=-1,\, \sup_{s \in [1- \delta,1)} \left|\frac{1}{B_{k} } W_{
\lfloor k s \rfloor }\right| >
\eta}\\
&& \qquad  = \frac{\mu_0}{\Pr{W_{T_n}=-1}} \sum_{k=n}^{\infty} \Pr{
\Lambda(k-1)=n-1, \, W_{k-1}=0, \, \sup_{s \in [1- \delta,1)}
\left|\frac{1}{B_{k} } W_{ \lfloor k s \rfloor
}\right| > \eta}\\
&& \qquad  \leq \frac{\mu_0}{\Pr{W_{T_n}=-1}} \sum_{k=n}^{\infty}
\Pr{ \Lambda(k-1)=n-1, \, W_{k-1}=0, \, \sup_{s \in [0,\delta+1/k]}
\left|-\frac{1}{B_{k} } W_{ \lfloor k s \rfloor
}\right| > \eta}\\
&& \qquad =\Pr{ \left.\sup_{s \in [0,\delta+1/T_n]}
\left|\frac{1}{B_{T_n} } W_{ \lfloor T_n s \rfloor }\right| > \eta
\, \right| \, W_{T_n}=-1},\end{eqnarray*} using
Proposition \ref{prop:timereversal} in the upper bound of the last display.  \eqref{eq:montrertension} then follows from the fact that that  the law of $
\left(\frac{1}{B_{T_n} } W_{ \lfloor T_n t \rfloor }; 0 \leq t \leq a \right)$ under $\Pr{ \, \cdot \, | \, W_{T_n}=-1}$ converges to the law of $(X^\br_t, 0 \leq t \leq a)$ for every $ a \in (0,1)$. We conclude that this convergence also holds for $a=1$.

We then combine the continuous Vervaat transformation $\Vc$ with the latter
convergence. Since $\Vc$ is almost surely continuous at $X^\br$
(Proposition \ref{prop:bridge} (i)), we get that the law of
$$ \Vc\left(\frac{1}{B_{T_n} } W_{ \lfloor T_n
t \rfloor } ;\, 0 \leq t \leq 1 \right)$$
under $\Pr{ \, \cdot \,
| \, W_{T_n}=-1}$ converges to the law of $\Vc(X^\br)$. Corollary \ref{cor:vervaatC}
and Proposition \ref{prop:bridge} (i) entail:
$$\left(\frac{1}{B_{\z(\t_n)} } \W_{ \lfloor \z(\t_n) t \rfloor }(\t_n);\, 0 \leq t \leq 1
\right) \quad
 \mathop{\longrightarrow}^{(d)}_{n \rightarrow \infty} \quad (X_t; 0 \leq t \leq 1) \textrm{ under } \Nn( \, \cdot \, |
\, \z=1).$$ This completes the proof.
\end{proof}

\section{Application: maximum degree in a Galton-Watson tree
conditioned on having many leaves}

In this section, we study the asymptotic behavior of the
distribution of the maximum degree in a Galton-Watson tree
conditioned on having $n$ leaves. To
this end, we use tools introduced in Section $6$ such as the Vervaat
transformation and absolute continuity arguments.

As earlier, we fix $ \theta \in (1,2]$ and suppose that $ \mu$ is a probability distribution satisfying the hypothesis $ ( H_ \theta)$. For every $n\geq 1$ such that $\Pmu[ \lt = n]>0$, let also $\t_n$
be a random tree distributed according to $\Pmu[\, \cdot \, | \, \lt = n]$. If $ \tau \in \mathbb{T}$ is a tree, let $\Delta( \tau)=\max\{k_u; \, u \in
\tau\}$ be the maximum number of children of individuals of $ \tau$. We are interested in the asymptotic behavior of $ \Delta ( \t_n)$.

\bigskip

The case $ 1<\theta < 2$ easily follows from previous results. Indeed, let $(B_n) _ { n \geq 1}$ be defined as before Lemma \ref {lem:locallimit}. Then, by Theorem \ref{thm:cvW3} and Remark \ref{rem:changement}:
\begin{equation}\label{eq:thmcvW32}\left(\frac{ \mu_0 ^ {1/\theta}}{B_{n} } \W_{ \lfloor \z(\t_n) t \rfloor
}(\t_n);\, 0 \leq t \leq 1 \right) \quad
 \mathop{\longrightarrow}^{(d)}_{n \rightarrow \infty} \quad \X .\end{equation} 
 If $Z \in D([0,1], \R)$, let $ \overline{\Delta}(Z)$ be the largest jump of $Z$. Note that by construction, $\overline{ \Delta} ( \W( \t_n))) = \Delta( \t_n)-1$. Since $\overline{\Delta}$ is a continuous functional on $D([0,1], \R)$, \eqref{eq:thmcvW32} immediately gives that $ \mu_0 ^ {1/ \theta}  \overline{\Delta}(X)/B_n$ converges in distribution towards $\overline{\Delta}( \X)$, which is almost surely positive.

However, in the case $ \sigma^2< \infty$, $ \overline{\Delta}( \X)=0$ almost surely since $ \X$ is continuous. It is natural to ask whether the suitably rescaled sequence $ \Delta( \t_n)$ converges to a non-degenerate limit. A similar question has been previously studied by Meir \& Moon \cite{MM} when $\t_n$ is distributed according to $ \Pmuzn$. We shall make the same  assumptions on $ \mu$ as Meir \& Moon.

More precisely, let $ \nu$ be a critical aperiodic probability distribution on $ \N$ with finite variance. Let $R$ be the radius of convergence of $ \sum \nu(i) z^i$.  We say that $ \nu$ satisfies hypothesis $ \mathcal {H}$ if the following two conditions hold:  $R>1$ and if $R< \infty$, $ \nu(n)^ {1/n}$ converges towards $1/R$ as $n \to \infty$, if $ R= \infty$ there exists $N \geq 0$ such that the sequence $(\nu(k) ^ {1/k})_ {k \geq N}$ is decreasing.

\begin{thm}\label{thm:max} \textrm{ }
\begin{enumerate}
 \item[(i)]If $ 1 < \theta <2$, we have $ \d { \mu_0 ^ {1/ \theta} \Delta ( \t_n)}/{B_n}  \quad
\mathop{\longrightarrow}_{n \rightarrow \infty} ^ {(d)} \quad \overline{\Delta}( \X)$.
 \item[(ii)]
Set $D(n)= \max\{ k \geq 1; \,  \mu([k,\infty)) \geq
1/n\}$. If $ \sigma^2 < \infty$, under the additional assumption  that $ \mu$ satisfies hypothesis $ \mathcal {H}$, we have for every $ \e>0$:
$$\P[(1-\e) D(n) \leq \Delta(\t_n) \leq (1+\e) D(n)] \quad
\mathop{\longrightarrow}_{n \rightarrow \infty} \quad 1.$$
  \end{enumerate}
\end{thm}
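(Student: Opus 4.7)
The plan for Part (i) is a direct application of Theorem \ref{thm:cvW3}. In the form \eqref{eq:thmcvW32}, the theorem gives the convergence in distribution of $(\mu_0^{1/\theta}/B_n)\W_{\lfloor \z(\t_n) t\rfloor}(\t_n)$ to $\X$ in $D([0,1],\R)$ equipped with the Skorokhod $J_1$ topology. By construction of the Lukasiewicz path, the largest jump of $\W(\t_n)$ equals $\Delta(\t_n) - 1$, so I want to apply the continuous mapping theorem to the largest-jump functional $\overline{\Delta}$. This functional is continuous at every c\`adl\`ag path whose largest jump is uniquely attained, and since the L\'evy measure of $X$ is diffuse, the jumps of $\X$ are almost surely pairwise distinct when $\theta \in (1,2)$, so $\overline{\Delta}$ is a.s.\ continuous at $\X$. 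This yields $\mu_0^{1/\theta}(\Delta(\t_n) - 1)/B_n \to \overline{\Delta}(\X)$ in distribution, and the additive $-1$ is negligible because $B_n \to \infty$.

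For Part (ii) we have $\theta = 2$ and $\X = \sqrt{2}\,\mathbbm{e}$ is continuous, so $\overline{\Delta}(\X) = 0$ and the previous argument is vacuous. Instead I would exploit the discrete Vervaat identity: by Proposition \ref{prop:vervaatdiscrete}, the vector of increments of $\W(\t_n)$ under $\Pmuln$ is a cyclic shift of $(W_i - W_{i-1})_{1 \leq i \leq T_n}$ under $\P[\,\cdot\,|\,W_{T_n}=-1]$. Cyclic shifts preserve the maximum of a vector, so
\[
\Delta(\t_n) - 1 \;\stackrel{(d)}{=}\; \max_{1 \leq i \leq T_n}(W_i - W_{i-1}) \quad \text{under } \P[\,\cdot\,|\,W_{T_n}=-1].
\]
Conditional on $T_n = k$ and $W_{T_n}=-1$, the $k - n$ non-negative increments are i.i.d.\ with law $\eta(j)=\mu(j+1)/(1-\mu_0)$ conditioned to sum to $n-1$ (a bridge-type conditioning that, by the local limit theorem applied to $\eta$, costs only a polynomial factor of order $\sqrt{n}$), and Lemma \ref{lem:CVf2}(i) locates $T_n$ near $n/\mu_0$, giving of order $n$ such increments. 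Under hypothesis $\mathcal{H}$ the tail $\mu([x,\infty))$ decays at least exponentially, so $D(n)$ grows like $\log n$ and classical extreme-value theory predicts that the above maximum is concentrated in $[(1-\e)D(n), (1+\e)D(n)]$.

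To turn this into a proof, the upper bound $\Delta(\t_n) \leq (1+\e)D(n)$ follows from a union bound: the unconditional probability that some $\eta$-distributed increment among the first $O(n)$ exceeds $(1+\e)D(n)$ is at most $O(n\,\mu([(1+\e)D(n),\infty)))$, which by $\mathcal{H}$ and the definition of $D(n)$ decays faster than any polynomial; this comfortably absorbs the polynomial cost of the bridge conditioning and of dividing by $\P[W_{T_n}=-1]$, which is only polynomially small by Lemma \ref{lem:technic3}. For the lower bound one has $n\,\mu([(1-\e)D(n),\infty))\to\infty$, so a Paley--Zygmund / second-moment argument gives that at least one unconditional $\eta$-increment exceeds $(1-\e)D(n)$ with probability tending to $1$. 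The main obstacle is transferring this lower bound through the bridge conditioning $\{W_{T_n}=-1\}$, since it is not a one-sided union-bound estimate. I would handle this by an absolute-continuity argument in the spirit of Section 5: apply Proposition \ref{prop:abscon} together with the uniform density bound of Lemma \ref{lem:technical} on an initial segment $[0, T_{an}]$ with $a$ close to $1$ to reduce to an unconditioned statement there, and treat the terminal segment by a time-reversal argument based on Proposition \ref{prop:timereversal}, exactly as in the proof of Theorem \ref{thm:cvW3}. A more direct alternative is to compute the first two moments of the count $\#\{u \in \t_n : k_u(\t_n) \geq (1-\e)D(n)\}$ via Proposition \ref{prop:K} and the local-limit estimates of Section 5, and apply Chebyshev.
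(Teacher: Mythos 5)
Part (i) of your proposal is correct and is the paper's own route: convergence of the rescaled Lukasiewicz path (Theorem \ref{thm:cvW3} together with Remark \ref{rem:changement}) followed by continuity of the largest-jump functional; your extra remark about the jumps of $\X$ being a.s.\ distinct is harmless but not needed. For part (ii), your overall strategy --- the Vervaat identity of Proposition \ref{prop:vervaatdiscrete}, cyclic invariance of the maximal increment, reduction to i.i.d.\ increments, and Lemma \ref{lem:MM} --- is also the paper's. Your lower bound (restrict to an initial segment $[0,T_{an}]$, use absolute continuity with a \emph{bounded} Radon--Nikodym density to pass to the unconditioned walk, then $(1-\P[W_1\geq(1-\e)D(n)])^{cn}\to 0$) is essentially the paper's argument with $a=1/2$; time reversal is not actually needed there, since it suffices to exhibit one large increment in the initial segment.

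The genuine gap is in your upper bound. You assert that $n\,\mu([(1+\e)D(n),\infty))$ ``decays faster than any polynomial'' and can therefore absorb the polynomial cost of dividing by $\P[W_{T_n}=-1]$. This is false: hypothesis $\mathcal{H}$ gives an exponential tail, so $D(n)$ grows like $\log n$ and Lemma \ref{lem:MM} yields only $\mu([(1+\e)D(n),\infty))\leq n^{-1-\e/3}$; the unconditional union bound is thus of order $n^{-\e/3}$, a bare polynomial. Since $\P[W_{T_n}=-1]$ is of order $n^{-1/2}$ (Lemma \ref{lem:technic3}), the crude estimate $\P[A\mid W_{T_n}=-1]\leq \P[A]/\P[W_{T_n}=-1]$ produces $O(n^{1/2-\e/3})$, which does not tend to $0$ for $\e<3/2$. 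To close the argument the conditioning must cost $O(1)$ rather than $O(\sqrt{n})$. Two repairs work: (a) the paper's, namely the same absolute-continuity step you invoke for the lower bound (Lemma \ref{lem:absolueC2} plus the uniform bound from Lemma \ref{lem:technic3} on $\chi_{n-\fl{n/2}}(W_{T_{n/2}})/\chi_n(0)$ on the event $\{|W_{T_{n/2}}|\leq\alpha B_{n/\mu_0}\}$) to handle the increments up to $T_{n/2}$, combined with the time-reversal identity of Proposition \ref{prop:timereversal} to handle the terminal increments --- note that time reversal is essential precisely here, for the upper bound, not for the lower one; or (b) a per-increment bound under the bridge conditioning, $\P[X_i\geq q_n\mid \sum_j X_j=n-1]\leq C\,\eta([q_n,\infty))$ via the local limit theorem, before summing over $i$. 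As written, your upper bound does not go through.
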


Part (i) of the theorem follows from the preceding discussion. It remains to prove (ii). We suppose that $ \mu$ satisfies the assumptions in (ii). The first step is to control the asymptotic
behavior of $D(n)$. 

\begin{lem}[Meir \& Moon]\label{lem:MM}Let $ \epsilon>0$. For $n$ sufficiently large:
$$ \mu([(1-\e) D(n),\infty)) \geq n^{-\frac{1}{1+\e/3}}, \qquad \mu([(1+\e) D(n),\infty)) \leq n^{-1-\e/3}.$$\end{lem}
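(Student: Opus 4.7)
The plan is to derive the lemma from a sharp version of the asymptotic $D(n) = (1+o(1))\log n/\log R$.

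The first step is to control the tail: under hypothesis $\mathcal{H}$ one has
$$\mu([k,\infty))^{1/k} \quad \mathop{\longrightarrow}_{k \to \infty} \quad 1/R$$
(with the convention $1/R = 0$ when $R=\infty$). When $R < \infty$, for any $\eta \in (0, 1-1/R)$ and every sufficiently large $k$, hypothesis $\mathcal{H}$ gives $(1/R-\eta)^k \leq \mu(k) \leq (1/R+\eta)^k$, and summing over $j \geq k$ yields
$$(1/R-\eta)^k \;\leq\; \mu([k,\infty)) \;\leq\; \frac{(1/R+\eta)^k}{1-1/R-\eta}.$$
When $R=\infty$ I set $\tau_k := \mu(k)^{1/k}$ and use its eventual monotonicity: if $\tau_k$ is decreasing for $k \geq N$, then for $j \geq k \geq N$ we have $\mu(j) = \tau_j^j \leq \tau_k^j$, hence $\tau_k^k \leq \mu([k,\infty)) \leq \tau_k^k/(1-\tau_k)$.

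Inserting these estimates into the defining relations $\mu([D(n)+1,\infty)) < 1/n \leq \mu([D(n),\infty))$ pins down $D(n)$ quantitatively. In the case $R<\infty$, setting $L_\pm := -\log(1/R\pm\eta)$ (so that both $L_\pm \to L := \log R$ as $\eta \to 0$), one obtains, for every $n$ sufficiently large,
$$\frac{\log n}{L_+} - 1 \;<\; D(n) \;\leq\; \frac{\log n + C_\eta}{L_-}.$$
The case $R=\infty$ yields the analogous $\tau_{D(n)}^{D(n)} \geq 1/(2n)$ (extracted from the upper tail bound $\mu([D(n),\infty)) \leq \tau_{D(n)}^{D(n)}/(1-\tau_{D(n)})$ and the inequality $\tau_{D(n)} \leq 1/2$ for large $n$) together with $\tau_{D(n)+1}^{D(n)+1} \leq 1/n$.

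For the upper bound of the lemma, the tail estimate at $k=\lfloor (1+\epsilon)D(n)\rfloor$ together with the lower bound on $D(n)$ yields
$$\mu\bigl([(1+\epsilon)D(n),\infty)\bigr) \;\leq\; C'_\eta \cdot n^{-(1+\epsilon)L_-/L_+};$$
since $(1+\epsilon)L_-/L_+ \to 1+\epsilon$ as $\eta \to 0$, I fix $\eta$ so small that $(1+\epsilon)L_-/L_+ \geq 1+\epsilon/2 > 1+\epsilon/3$. For the lower bound, the tail estimate at $k = \lceil (1-\epsilon) D(n) \rceil$ combined with the upper bound on $D(n)$ gives
$$\mu\bigl([(1-\epsilon)D(n),\infty)\bigr) \;\geq\; c'_\eta \cdot n^{-(1-\epsilon)L_+/L_-};$$
here $(1-\epsilon)L_+/L_- \to 1-\epsilon$ and the elementary identity $(1-\epsilon)(1+\epsilon/3)=1-2\epsilon/3-\epsilon^2/3 \leq 1$ shows $1-\epsilon \leq 1/(1+\epsilon/3)$, so taking $\eta$ small enough delivers the stated estimate $n^{-1/(1+\epsilon/3)}$. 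The case $R=\infty$ is handled along the same lines with $\tau_{D(n)}$ playing the role of $1/R+\eta$: the key inputs are $\tau_{D(n)} \geq (2n)^{-1/D(n)}$ from Step~2 and the fact that $\tau_{D(n)} \to 0$, both of which feed into the estimate through the exponents $k_n/D(n)$ approaching $1\pm\epsilon$.

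The main delicacy of the argument lies in tuning $\eta$ as a function of $\epsilon$ so that the asymptotic inequalities between $L_-$ and $L_+$ translate into the precise explicit constants $1+\epsilon/3$ and $1/(1+\epsilon/3)$ appearing in the statement; this is elementary but must be checked carefully, especially in the case $R=\infty$ where the ``effective rate'' $|\log \tau_{D(n)}|$ drifts to infinity and one must ensure that the resulting sub-polynomial factors in $n$ do not disrupt the required bounds.
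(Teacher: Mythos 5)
The paper does not actually prove this lemma: it simply refers the reader to the proof of Theorem~1 in Meir \& Moon \cite{MM}, noting that this is exactly where the hypothesis $\mathcal{H}$ is used. Your proposal therefore supplies a self-contained argument where the paper offers only a citation, and the argument you give is sound: it is the natural route of converting $\mathcal{H}$ into two-sided geometric-type bounds on the tail $\mu([k,\infty))$, sandwiching $D(n)$ via the defining inequalities $\mu([D(n)+1,\infty))<1/n\leq \mu([D(n),\infty))$, and then feeding the tail upper bound together with the lower bound on $D(n)$ into the first estimate (and symmetrically for the second), with $\eta$ tuned at the end so that the exponents clear $1+\e/3$ and $1/(1+\e/3)$; the elementary inequalities $1-\e<1/(1+\e/3)<1<1+\e/3<1+\e$ leave the necessary room. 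One slip to correct: with your literal definition $L_{\pm}=-\log(1/R\pm\eta)$ one has $L_{+}<L_{-}$, so the displayed sandwich $\log n/L_{+}-1<D(n)\leq(\log n+C_\eta)/L_{-}$ is vacuous (indeed self-contradictory) for large $n$; the correct statement is $\log n/L_{-}-1<D(n)\leq(\log n+\log C_\eta)/L_{+}$, i.e.\ the subscripts must be swapped (the tail \emph{upper} bound, with rate $L_{+}$, gives the \emph{upper} bound on $D(n)$, and the tail lower bound, with rate $L_{-}$, gives the lower bound). Since both ratios $L_{-}/L_{+}$ and $L_{+}/L_{-}$ tend to $1$ as $\eta\to0$, this relabeling does not affect the final exponents or the conclusion. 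Two further minor points worth recording: the first inequality of the lemma is trivial when $\e\geq1$, so one may assume $\e\in(0,1)$; and in the case $R=\infty$ the hypothesis concerns $\nu(k)^{1/k}=\mu(k+1)^{1/k}$ rather than $\mu(k)^{1/k}$, which shifts your indices by one but changes nothing asymptotically since $D(n)\to\infty$.
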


\begin{proof}See the proof of Theorem 1 in \cite{MM}, which uses the different assumptions made on $ \mu$. \end{proof}

\begin{proof}[Proof of Theorem \ref{thm:max} in the case $ \sigma^2 < \infty$]The idea of the proof consists in showing that if the
Lukasiewicz path of a non-conditioned Galton-Watson tree satisfies
asymptotically some property which is invariant under cyclic-shift (with some
additional monotonicity condition), then the Lukasiewicz path of a
conditioned Galton-Watson tree satisfies asymptotically the same
property.

 We first establish the lower bound. Recall the
notation introduced in (\ref{eq:notations}). If $\mathbf{u}=(u_1,\ldots,u_k) \in \Z^k$, set $\mathcal{M}(\mathbf{u})=\max_{1 \leq i \leq k} u_i$, 
so that
$\Delta(\t_n)=\mathcal{M}( \W_1(\t_n)- \W_0(\t_n),\ldots,\W_{\z(\t_n)}(\t_n)- \W_{\z(\t_n)-1}(\t_n))+1$.
Note that $\mathcal{M}$ is invariant under cyclic shift. Set
$p_n=(1-\e) D(n)$. To simplify notation, for $u_1,\ldots,u_k \in \Z$
set $F ^ {(n)}(u_1,\ldots,u_k)=1_{\{\mathcal{M}(u_1,\ldots,u_k) < p_n\}}$.
We have:
\begin{eqnarray} \Pr { \Delta ( \t_n) < p_n+1}&=&\Esmu{F^ {(n)}(
\W_1(\tau)- \W_0(\tau),\ldots,\ldots, \W_{\zt}(\tau)- \W_{\zt-1}(\tau))) \, | \, \lt=n}\notag\\
&=& \Es{F^ {(n)}(\V(W_1,W_2-W_1,\ldots,W_{T_ {n}}-W_{T_n-1})) \, |
\, W_{T_n}=-1}\notag\\
&=& \Es{F^ {(n)}(W_1,W_2-W_1,\ldots,W_{T_n}-W_{T_n-1}) \, | \, W_{T_n}=-1}\label{eq:Deltanon},\end{eqnarray} where
we have used Proposition \ref{prop:vervaatdiscrete} in the first
equality, and the fact that $F^ {(n)}(\V(\mathbf{u}))=F^ {(n)}(\mathbf{u})$ for
every $ \mathbf{u} \in \Z^k$ ($k \geq 1$) in the second one.  To simplify notation, we put $F_k^ {(n)}(W)=F_n(W_1,W_2-W_1, \ldots,W_k-W_ {k-1})$. Note that $\Es{F_ { T_n }^ {(n)}(W) \, | \, W_{T_n}=-1} \leq \Es{F_ { T_ {n/2} }^ {(n)}(W) \, | \, W_{T_n}=-1}$. In order to establish the lower bound in Theorem \ref {thm:max} (ii), it then suffices to prove that $ \Es{F_ { T_ {n/2} }^ {(n)}(W) \, | \, W_{T_n}=-1}$ tends to $0$ as $n \rightarrow \infty$. 	Let $ \alpha>0$, and let the
event $A_n^{\alpha}$ be defined by $$ A_n^{\alpha}= \left\{ |W_{T_{ n/2}}| < \alpha \sigma
\sqrt{n/(2\mu_0)} \right\},$$ where $\sigma^2$ is the variance of $\mu$. By Lemma \ref{lem:CVf2} (i), we have:
\begin{eqnarray*}&& \Es{F_ { T_ {n/2} }^ {(n)}(W) \, | \, W_{T_n}=-1}  \\
 && \qquad \qquad \leq  \Es{1_{\{A_n^{\alpha}\}^c} \, | \,
W_{T_n}=-1}+\Es{F_ { T_ {n/2} }^ {(n)}(W) 1_{\{A_n^{\alpha}, \frac{n}{4\mu_0} \leq T_{{n}/{2}} \leq
\frac{n}{\mu_0} \}} \, | \, W_{T_n}=-1}+oe_ {1/2}(n)
\end {eqnarray*}
By  Lemma \ref{lem:absolueC2}:
\begin{eqnarray*}&& \Es{F_ { T_{n/2} }^ {(n)}(W) 1_{\{A_n^{\alpha}, \frac{n}{4\mu_0} \leq T_{{n}/{2}} \leq
\frac{n}{\mu_0} \}} \, | \, W_{T_n}=-1} = \Es{ F_ { T_{n/2} }^ {(n)}(W)
1_{\{A_n^{\alpha}, \frac{n}{4\mu_0} \leq T_{{n}/{2}} \leq
\frac{n}{\mu_0} \}} \frac{\chi_{n- \fl{n/2}}(W_{T_{n/2}})}{\chi_n(0)}},
\end{eqnarray*}
where $\chi_n(j)=\P_j[W_{T_n}=-1]$. By
Lemma \ref{lem:technic3}, there exists $C>0$ such that for every $n$ large enough,
${\chi_{n- \fl{n/2}}(W_{T_{n/2}})}/{\chi_n(0)} \leq C$ on the event $A_n^{\alpha}$. By combining the previous observations, we get:
\begin{eqnarray}&& \Es{F_ { T_ {n/2} }^ {(n)}(W) \, | \, W_{T_n}=-1} \notag \\
&& \qquad  \qquad \leq \Es{1_{\{A_n^{\alpha}\}^c} \, | \,
W_{T_n}=-1} +   \ C\Es{F_ { T_{n/2} }^ {(n)}(W)1_{\{\frac{n}{4\mu_0} \leq T_{{n}/{2}} \leq
\frac{n}{\mu_0} \}}}+oe_{1/2}(n). \label {eq:i}\end{eqnarray}By (\ref{eq:trucsuperutile}), we have:
$$\lim_{\a \rightarrow \infty} \lim_{n \rightarrow \infty} \Es{1_{\{A_n^{\alpha}\}^c} \,
| \, W_{T_n}=-1}=0.$$ Let us finally show that the second term in the right-hand side of \eqref{eq:i}
tends to zero as well. We have:
\begin{eqnarray*}\Es{F_ { T_{n/2} }^ {(n)}(W) 1_{\{\frac{n}{4\mu_0} \leq T_ {n/2} \leq \frac{n}{\mu_0}\}}} &\leq& \Pr{
\mathcal{M}(W_1,W_2-W_1,\ldots,W_{ \fl{n/4\mu_0}}-W_{
\fl{n/4\mu_0}-1})
< p_n}\\
&=& \P[W_1 <
p_n]^{ \fl{n/4\mu_0}}=\left(1-P[W_1 \geq
p_n]\right)^{ \fl{n/4\mu_0}}.\end{eqnarray*} The first part of Lemma \ref{lem:MM},
implies that the last quantity tends to $0$ as $n \rightarrow \infty$. By combining the previous estimates, we conclude that $\P[(1-\e) D(n) \geq  \Delta(\t_n) ]
\rightarrow 0$ as $n \rightarrow \infty$.

Let us now establish the upper bound. Set $q_n=(1+\e) D(n)$. By
an argument similar to the one we used to establish (\ref{eq:Deltanon}), we get $\P[\Delta(\t_n) > q_n+1] =\P[\mathcal{M}(W_1,\ldots,W_{T_n}-W_{T_n-1}) > q_n \, | \,
W_{T_n}=-1]$. It follows that:
\begin{eqnarray*}\P[\Delta(\t_n) > q_n+1] &\leq&
\P[\mathcal{M}(W_1,W_2-W_1,\ldots,W_{T_{n/2}}-W_{T_{n/2}-1}) >
q_n \,
| \, W_{T_n}=-1]\\
&&     +\P[\mathcal{M}(W_{T_{n/2}}-W_{T_{n/2}-1},\ldots,W_{T_n}-W_{T_n-1}) > q_n \, | \,
W_{T_n}=-1]\end{eqnarray*} By a time-reversal argument based on
Proposition \ref{prop:timereversal}, it is sufficient to show that
the first term of the last expression tends to 0. To this end, we
use the same approach as for the proof of the lower bound,
taking this time $F^  {(n)}_k(W)=1_{\{\mathcal{M}(W_1,\ldots,W_k-W_ {k-1}) > q_n\}}$.
It is then sufficient to verify that:
$$\Es{F_ { T_{n/2} }^ {(n)}(W)1_{\{\frac{n}{4\mu_0} \leq T_{n/2} \leq \frac{n}{\mu_0} \}}}
\quad \mathop{\longrightarrow}_{n \rightarrow \infty} \quad 0.$$ To
this end, write:
\begin{eqnarray*}\Es{F_ { T_{n/2} }^ {(n)}(W)1_{\{\frac{n}{4\mu_0} \leq T_{n/2} \leq \frac{n}{\mu_0} \}}}
& \leq& \P[\mathcal{M}(W_1,W_2-W_1,\ldots,W_{ \fl{n/\mu_0}}-W_{ \fl{n/\mu_0}-1}))
> q_n]\\
&=& \quad 1-\left( 1- \P[W_1 >
q_n]\right)^{\fl{n/\mu_0}}\end{eqnarray*} which tends to $0$ as $n
\rightarrow \infty$ by Lemma \ref{lem:MM}. By combining the previous estimates, we conclude that $\P[(1+\e) D(n) \leq  \Delta(\t_n) ]
\rightarrow 0$ as $n \rightarrow \infty$. This completes the proof of the theorem.
\end{proof}

\begin{rem}\label{rem:uniformtree}In particular cases, it is possible to obtain better bounds in the previous theorem.
Let $\mu$ be defined by $\mu(0)=2-\sqrt{2}$,
$\mu(1)=0$ and $\mu(i)=\left( ({2-\sqrt{2}})/{2}\right)^{i-1}$ for
$i \geq 2$ (this probability distribution appears when we consider the tree associated with a uniform dissection of the $n$-gon, see \cite {CK}). One verifies that $\mu$ is a critical probability
measure. For $n \geq 1$, let $\t_n$ be a random tree distributed
according to $\Pmu[\, \cdot \, | \, \lt  = n]$. One easily checks that $ \mu$ is the unique critical
probability measure such that $\t_n$ is distributed uniformly over the set of all rooted plane trees with $n$ leaves such
that no vertex has exactly one child. In this particular case,
Theorem \ref{thm:max} (ii) can be strengthened as follows:
\begin{equation}
\label{eq:better}
\P[\log_b n - c \log_b \log_b n \leq \Delta(\t_n) \leq \log_b n + c \log_b \log_b n]
\quad \mathop{\longrightarrow}_{n \rightarrow \infty} \quad1,\end{equation}
for every $c>0$,
where $b={1}/{\mu(2)}=\sqrt{2}+2$. Indeed, the proof of Theorem \ref {thm:max} shows that it is sufficient to
verify that for every $ c>0$: $$\left(1-P[W_1 \geq \log_b n - c \log_b \log_b
n]\right)^{n/4\mu_0}\mathop{\longrightarrow}_{n \rightarrow
\infty}0, \qquad \left(1-P[W_1 \geq \log_b n + c \log_b \log_b
n]\right)^{n/4\mu_0}\mathop{\longrightarrow}_{n \rightarrow
\infty}1.$$ These asymptotics are easily obtained since the
probabilities appearing in these two expressions can be calculated
explicitly.

The convergence \eqref{eq:better} yields an interesting application to the maximum face degree in a uniform dissection (see \cite[Prop. 3.5]{CK}).
\end{rem}

\section{Extensions}

Recall that if $ \mathcal{A}$ is a non-empty subset of $\N$ and $ \tau$ a tree, $ \zeta_ \mathcal{A}( \tau)$ is the total number of vertices $u \in \tau$ such that $ k_u ( \tau) \in \mathcal{A}$.  For a forest $ \bf$,  $ \zeta_ \mathcal{A}( \bf)$ is defined in a similar way. In this section, we extend the results  (I) and (II) appearing in the Introduction to the case where  $ \mathcal{A} \neq  \{0\}$. By slightly adapting the previous techniques, it is possible to obtain the following more general result. 

 Recall that $ \mu$ is a probability distribution on $ \N$ satisfying the hypothesis $ (H _ \theta)$ for some $ \theta \in (1,2]$. We also consider the slowly varying function $h$ and the sequence $(B_n)_ { n \geq 1}$ introduced just before Lemma \ref{lem:locallimit}.

\begin {thm} \label {thm:general} Let $ \mathcal {A}$ be a non-empty subset of $\N$. If $\mu$ has infinite variance, suppose in addition that either $ \mathcal {A}$ is finite, or $ \N \backslash \cal {A}$ is finite.
\begin{enumerate}
\item[(I)] Let $d \geq 1$ be the largest integer such
that there exists $b \in \N$ such that supp$(\mu)\backslash  \cal {A}$ is
contained in $b+d \Z$, where supp$(\mu)$ is the support of $\mu$.
Then :
$$ \Prmu{\zeta_ \mathcal{A}( \tau)=n} \quad \mathop{\sim}_{n \rightarrow \infty} \quad \mu( \mathcal{A})^{1/\theta} p_1(0) \frac{\gcd(b-1,d)}{h(n) n^{ 1/\theta+1}}$$
for those values of $n$ such that $ \Prmu{\zeta_ \mathcal{A}( \tau)=n}>0$. 
\item[(II)] For every $n\geq 1$ such that $\Prmu{\zeta_ \mathcal{A}( \tau)=n} >0$, let $\t_n$ be a random tree distributed according to $\Pmu[\, \cdot \, | \,  \zeta_ \mathcal{A}( \tau)=n]$. Then
$$\left( \frac{1}{B_{\z(\t_n)} } \W_{ \lfloor \z(\t_n) t \rfloor
}(\t_n),\frac{B_{\z(\t_n)}}{\z(\t_n)}C_{2 \z(\t_n) t}(\t_n),
\frac{B_{\z(\t_n)}}{\z(\t_n)}H_{\z(\t_n) t}(\t_n)\right)_{0 \leq t
\leq 1}$$ converges in distribution to $ ( \X, \H, \H)$ as $n \rightarrow \infty$.
\end{enumerate}
\end {thm}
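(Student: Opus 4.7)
The plan is to mirror, step by step, the arguments developed in the case $\mathcal{A}=\{0\}$, replacing everywhere the indicator ``vertex is a leaf'' by the indicator ``vertex has offspring in $\mathcal{A}$''. The starting point is a generalization of Proposition \ref{prop:K}. Set $\mathcal{A}'=\{a-1:a\in\mathcal{A}\}$; with $W$ as in Proposition \ref{prop:RW} and $X_i=W_i-W_{i-1}$, a vertex $u(i)$ of the tree belongs to $\mathcal{A}$ in the sense of offspring count iff $X_{i+1}\in\mathcal{A}'$. Let $S_p=\sum_{i=1}^{p}1_{\{X_i\in\mathcal{A}'\}}$, which is a sum of $p$ i.i.d.\ Bernoullis of parameter $\mu(\mathcal{A})$. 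Let $W^{\mathcal{A}}$ (resp.\ $W^{\mathcal{A}^c}$) denote the random walk whose jump law is that of $X_1$ conditioned on $X_1\in\mathcal{A}'$ (resp.\ $X_1\notin\mathcal{A}'$). The Cyclic Lemma gives, exactly as in \eqref{eq:kemperman} and \eqref{eq:LambdaUtile},
\[
\Pmu\bigl[\z(\tau)=p,\,\zeta_{\mathcal{A}}(\tau)=n\bigr]
=\frac{1}{p}\,\P[S_p=n]\,\P\bigl[W^{\mathcal{A}}_{n}+W^{\mathcal{A}^c}_{p-n}=-1\bigr].
\]
The large deviation Lemma \ref{lem:phi} applied to $S_p$ gives, verbatim, the analogues of Lemmas \ref{lem:GdesDev} and \ref{lem:GdesDev2}: if $\z(\tau)=p$ then $\zeta_{\mathcal{A}}(\tau)\approx\mu(\mathcal{A})p$, and conversely if $\zeta_{\mathcal{A}}(\tau)=n$ then $\z(\tau)\approx n/\mu(\mathcal{A})$, with $oe_{1/2}$ concentration.

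For (I), the strategy of Section 3 goes through. Sum the identity above over $p$ in a window of size $\e n$ around $n/\mu(\mathcal{A})$ (the complement being $oe_{1/2}$), rewrite as an integral in $u=(p-n/\mu(\mathcal{A}))/\sqrt{n}$, and apply a local limit theorem to both $S_p$ and the convolution $W^{\mathcal{A}}_{n}+W^{\mathcal{A}^c}_{p-n}$. This is where the hypothesis on $\mathcal{A}$ in the infinite variance case is used: if $\mathcal{A}$ is finite then $W^{\mathcal{A}}$ has bounded jumps and the heavy tail of $\mu$ is inherited by $W^{\mathcal{A}^c}$, whereas if $\N\setminus\mathcal{A}$ is finite the roles are reversed. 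In either case, one walk is in the normal domain and the other inherits the domain of attraction of index $\theta$, so $W^{\mathcal{A}}_{n}+W^{\mathcal{A}^c}_{p-n}$ is itself in the domain of attraction of the stable law of index $\theta$, with scaling constant asymptotic to a constant times $B_{p/\mu_0(\mathcal{A})}\cdot\mu(\mathcal{A})^{1/\theta}$ after centering. The LLT then yields the analogues of Lemma \ref{lem:technical} and Lemma \ref{lem:technic3}, and a dominated convergence argument identical in form to the one leading to \eqref{eq:int} gives the asymptotic in (I), with $\mu_0^{1/\theta}$ replaced by $\mu(\mathcal{A})^{1/\theta}$. The lattice/aperiodicity discussion is the same as in Section 3.2.

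For (II), I would repeat Sections 4--6 line by line. Define $T^{\mathcal{A}}_k(\tau)$ to be the first index $j$ such that $u(0),\ldots,u(j)$ contains $k$ vertices with offspring in $\mathcal{A}$. Proposition \ref{prop:passage} is reproved: Proposition \ref{prop:passage}(i) requires no change, and Proposition \ref{prop:passage}(ii) uses only the new large deviation bound together with the new asymptotic for $\Prmu{\zeta_{\mathcal{A}}(\tau)\geq n}\sim \theta\mu(\mathcal{A})^{1/\theta}p_1(0)/(n^{1/\theta}h(n))$ coming from (I). Theorem \ref{thm:inv1} then gives the invariance principle under $\Pmu[\,\cdot\,|\,\zeta_{\mathcal{A}}(\tau)\geq n]$. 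The absolute continuity identity of Proposition \ref{prop:abscon} generalizes with $\psi_p(j)=\P_{\mu,j}[\zeta_{\mathcal{A}}(\mathbf{f})=p]$, $\psi^*_p(j)=\P_{\mu,j}[\zeta_{\mathcal{A}}(\mathbf{f})\geq p]$, and the Radon--Nikodym ratio is shown to converge to $\Gamma_a$ by an analogue of Lemma \ref{lem:technical}, proved by the same method as in Section 5.5 with $\mu(\mathcal{A})$ in place of $\mu_0$ throughout. This yields Theorem \ref{thm:cvG} for $\zeta_{\mathcal{A}}$, and the time-reversal argument at the end of Section 5 extends the convergence of the height/contour processes to $[0,1]$. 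Finally, Section 6 carries over: the Vervaat transformation (Proposition \ref{prop:vervaatdiscrete}) needs only to be reinterpreted with $\Lambda_{\mathcal{A}}(k)=\Card\{0\leq i\leq k-1:X_{i+1}\in\mathcal{A}'\}$ and $T_u^{\mathcal{A}}=\inf\{k:\Lambda_{\mathcal{A}}(k)=\fl{u}\}$, and Lemmas \ref{lem:absolueC2}--\ref{lem:technic3} go through with the obvious modifications.

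The main obstacle is the new form of the convolution $\P[W^{\mathcal{A}}_{n}+W^{\mathcal{A}^c}_{p-n}=-1]$. In the leaf case $W^{\mathcal{A}}_n$ was deterministic ($=-n$), which is why Proposition \ref{prop:K} factored so cleanly and why the LLT could be applied to a single random walk. Here both summands are genuinely random and neither is a priori in the domain of attraction of a stable law of index $\theta$; the hypothesis that $\mathcal{A}$ or its complement is finite in the infinite variance case is exactly what forces one summand into the normal domain and the other to inherit the heavy-tail behavior, so that the usual LLT still controls the sum. Writing the uniform-in-$j$ analogues of Lemma \ref{lem:technical} in this convolutional setting---in particular checking that the Gaussian factor integrates out cleanly and that the conclusion depends only on $\mu(\mathcal{A})$ and $h$---is the principal technical burden; the rest of the proof is then a mechanical transcription.
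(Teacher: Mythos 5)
Your proposal matches the paper's own (sketched) argument for Theorem \ref{thm:general}: the same generalization of Proposition \ref{prop:K} with the two conditioned walks (the paper's $U$ and $W'$ are your $W^{\mathcal{A}}$ and $W^{\mathcal{A}^c}$), the same use of the finiteness hypothesis on $\mathcal{A}$ or $\N\setminus\mathcal{A}$ to keep one walk in the normal domain while the other inherits the $\theta$-stable tail, and the same transcription of the machinery of Sections 2--6 for part (II). The only cosmetic difference is that the paper controls the convolution $\P[W'_{p-n}=-U_n-j]$ by conditioning on $U_n$ in a $\sqrt{n}$-window around $n\,\E[U_1]$ and applying the local limit theorem to each i.i.d.\ walk separately, rather than invoking a local limit statement for the two-walk sum directly as you phrase it.
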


Theorem \ref {thm:general} can be established by the same arguments used to prove Theorems \ref{thm:ltn}, \ref{thm:cvG} and \ref{thm:cvW3}. The main difference comes from the proof of the needed extension of Lemma \ref {lem:technical} (ii), which is more technical. Let us explain the argument leading to the convergence
\begin{equation}
\label{eq:gen} \lim_{n \rightarrow \infty } \sup_{1 \leq j \leq \a B_n} \left| n \Prmuj{\z_ { \cal {A}}(\bf)=n}-
q_1\left(\frac{j}{B_{n/\mu( \mathcal {A})}}\right) \right|=0.
\end{equation}

The first step is to generalize Proposition \ref{prop:K} and find the joint law of $(\z(\bf), \z_ { \mathcal{A}}(\bf))$ under $ \Pmuj$ (which is the contents of the latter proposition in the case  $ \mathcal {A}=  \{ 0 \}$). To this end, let $ \rho$ and $ \mu'$ be the two probability measures on $ \N \cup  \{-1\}$ defined by:
$$  \rho(i)= \begin {cases} \frac{\mu(i+1)}{\mu ( \mathcal{A})} &\textrm {if } i+1  \in \mathcal{A} \\
0 &\textrm {otherwise} \end {cases}, \qquad
 \mu'(i)=\begin {cases} \frac{\mu(i+1)}{ 1-\mu ( \mathcal{A})}  &\textrm {if } i+1  \not \in \mathcal{A} \\
0 & \textrm {otherwise} .\end {cases}
$$
It is then straightforward to adapt Proposition \ref{prop:K} and get that:
$$\Pmuj[\z(\bf)=p, \z_ { \mathcal{A}}(\bf)=n]=\frac{j}{p} \, \P[S_p=n] \, \P[W'_{p-n}=-U_n-j].$$
where $S_p$ is the sum of $p$ independent Bernoulli random variables
of parameter $\mu ( \mathcal{A})$, $(W'_n)_ {n \geq 1}$ is the random walk started from $0$
with jump distribution $ \mu'$ and $(U_n) _ {n \geq 1}$ is an independent random walk started from $0$
with jump distribution $ \rho$. Note that $-U_n=n$ when $\mathcal{A}=  \{0\}$. 

First suppose that $ \mu$ has finite variance. Then both $W'_1$  and $U_1$ have finite variance. As in the proof of Lemma \ref {lem:technical}, we have, for $ 0 < \epsilon <1$:
\begin{equation}
\label{eq:gen2} n \Prmuj{\z_ { \cal {A}}(\bf)=n} = \d n \d \int_ {n/ \mu( \mathcal {A}) - \epsilon n + O(1)} ^{n/ \mu( \mathcal {A}) + \epsilon n + O(1)} dx \frac{j}{ \fl {x}} \Pr {S_ { \fl {x}} =n} \Pr {W'_ { \fl {x}-n}=-U_n-j} + oe_ {1/2}(n).
\end{equation}
By the law of large numbers, we can suppose that for $n$ sufficiently large, $ \Pr { \left| U_n-n \Es{U_1}\right|> \epsilon  n} < \epsilon$. Set $t_n(v)=\fl { n \Es {U_1} + v \sqrt {n}}$ for $n \geq 1$ and $ v \in \R$. It follows that:
$$ \left|\Pr {W'_ { \fl {x}-n}=-U_n-j}-  \int_ {- \epsilon  \sqrt {n}+ o(1)} ^ { \epsilon \sqrt {n}+o(1)}dv  \sqrt {n}\, \Pr {W'_ { \fl {x}-n}= -t_n(v) -j} \Pr {U_n=t_n(v)} \right| \leq  \epsilon.$$
The local limit theorems give bounds and estimates for the quantities $\Pr {W'_ { \fl {x}-n}= -t_n(v) -j}$ and $\Pr {U_n=t_n(v)}$. As previously, we can then use the dominated convergence theorem to obtain an estimate of $\Pr {W'_ { \fl {x}-n}=-U_n-j}$ as $n \rightarrow \infty$.  We substitute this estimate in \eqref{eq:gen2} and using once again the dominated convergence theorem we obtain \eqref{eq:gen}.

Now suppose that $ \mu$ has infinite variance and that $ \mathcal {A}$ is finite. Then $W'_1$ is in the domain of attraction of a stable law of index $ \theta$ and $U_1$ has bounded support hence finite variance. The proof of \eqref{eq:gen} then goes along the same lines as in the finite variance case.

When $ \mu$ has infinite variance and $ \N \backslash \mathcal {A}$ is finite, $W'_1$ has finite variance and $U_1$ is in the domain of attraction of a stable law of index $ \theta$. The proof of \eqref{eq:gen} goes along the same lines as when $ \mu$ has finite variance by interchanging the roles of $W'$ and of $U$ (see \cite {Kautre} for details in the case $ \mathcal {A}= \N$).

\begin {tabular}{l }
Laboratoire de mathématiques, UMR 8628 CNRS. \\
Université Paris-Sud\\
91405 ORSAY Cedex, France
\end {tabular}

\medbreak
\noindent \texttt{igor.kortchemski@normalesup.org}

\end{document}